\documentclass[12pt]{article}
\usepackage{stmaryrd}
\usepackage{amscd}
\usepackage{pifont}
\usepackage{amssymb}
\usepackage{amsmath}
\usepackage{amsthm}
\usepackage{enumerate}
\usepackage{graphicx}

\usepackage{ulem}
\usepackage{color}
\usepackage{xcolor}

\usepackage{chngcntr}

\newtheorem{theorem}{Theorem}[section] 
\newtheorem{lemma}[theorem]{Lemma}

\setcounter{equation}{\value{theorem}}

\allowdisplaybreaks

\date{}

\usepackage[nospace]{cite}
\usepackage[footnotesep=0.65cm]{geometry}
\usepackage{geometry}
\geometry{verbose,a4paper,tmargin=3cm,bmargin=3cm,lmargin=2cm,rmargin=2cm}
\usepackage{type1cm}
\usepackage[hang,marginal]{footmisc}
\linespread{1.3}
\let\oldfootnote\footnote
\renewcommand\footnote[1]{\oldfootnote{\renewcommand\baselinestretch{1.1}
\large\footnotesize\ignorespaces#1}}
\addtolength{\footnotesep}{1.5pt}

\newcommand{\bn}{\mathbf{n}}

\newcommand{\bx}{\mathbf{x}}
\newcommand{\bd}{\mathbf{d}}

\numberwithin{equation}{section}

\usepackage[noblocks]{authblk}

\begin{document}

\title{\large{\textbf{An interior penalty discontinuous Galerkin method for the time-domain acoustic-elastic wave interaction problem}}}

\author[1]{\normalsize Yingda Cheng}
\author[2]{\normalsize Jing Huang}
\author[3]{\normalsize Xiaozhou Li}
\author[4]{\normalsize Liwei Xu}

\affil[1]
{\normalsize\itshape Department of Mathematics, Department of  Computational Mathematics, Science and Engineering, Michigan State University, East Lansing, MI 48824, U.S.A. ycheng@msu.edu. Research is supported by NSF grants  DMS-1453661 and DMS-1720023.
}
\affil[2]
{\normalsize\itshape Department of Mathematics,   Michigan State University, East Lansing, MI 48824, U.S.A.
}
\affil[3]
{\normalsize\itshape School of Mathematical Sciences, University of Electronic Science and Technology of China,  Sichuan 611731, China.  Research is supported by a NSFC grant (No. 11801062).
}
 \affil[4]
{\normalsize\itshape School of Mathematical Sciences, University of Electronic Science and Technology of China,  Sichuan 611731, China.  Research is supported by a Key Project of the Major Research Plan of NSFC (No.91630205) and a NSFC grant (No. 11771068).
}

\maketitle

\begin{abstract}
In this paper, we consider numerical solutions of a time domain acoustic-elastic wave interaction problem which occurs between a bounded penetrable elastic body and a compressible inviscid fluid. It is also called the fluid-solid interaction problem. First, we introduce an artificial boundary to transform the original transmission problem into a problem in a bounded region, and then we employ a symmetric interior penalty discontinuous Galerkin method for the solution of the reduced interaction problem consisting of second-order wave equations. A priori error estimate in the energy norm is presented, and numerical results confirm the accuracy and validity of the proposed method.

{\bf Keywords:} Fluid-solid interaction problem; Time-domain wave propagation; Second wave equation; Interior penalty discontinuous Galerkin methods; Error analysis

\end{abstract}

\section{Introduction}
\paragraph{}In a time-domain fluid-solid interaction (FSI) problem, an incident acoustic wave is scattered by a bounded elastic obstacle immersed in a homogeneous, compressible and inviscid fluid. The problem of determining the scattered wave field plays prominent roles in many scientific and engineering areas, such as detecting and identifying submerged objects, geophysical exploration, medical imaging, seismology, and oceanography (such as~\cite{GCH1994}). Because of the difficulties of dealing with the time dependence, the FSI problem is usually studied based on a time-harmonic setting. Various efficient and accurate numerical methods have been developed to solve the time-harmonic FSI problem.  Some of them provide proper guidance also for time-domain problems.  Popular methods include the boundary integral equation (BIE) method, see~\cite{CJLPAM1995, TYGCHLX2015} and coupling methods such as the so-called coupled FEM-BEM method, see~\cite{TYARLX}.  An artificial boundary or an absorbing layer (\cite{YX}) can be introduced to reduce the original unbounded problem to a bounded problem which could be solved using field equation solvers such as the finite element method.

   \par{}The studies of this time-domain scattering problems now gain more and more attention since the time-domain model gives more information about the wave, more general material, and nonlinearity, see~\cite{QCPM2014,LLWBWXZ2012}. There are relatively fewer mathematical analysis and numerical studies for the time-domain wave scattering problems.  As to the numerical studies towards the time-domain wave scattering problems, the main challenge is how to handle the problem defined on the unbounded domain.  Many approaches attempted to solve the time-domain problems numerically are developed, such as coupling of boundary element and finite element with different time quadratures, see~\cite{OVEHA1991,BFMKBIW2006},   time-domain boundary elemment methods ~\cite{GNS2017, GNS2018}, and to name a few.    The authors of~\cite{MJGJBK1995} gave the exact non-reflective boundary condition for the three dimensional time-domain wave scattering problem in 1995.  For basic isotropy wave equation with constant coefficients, a planar PML method in one space direction designed for some particular domain is considered in~\cite{TH1999}.  In~\cite{ZC2009}, the mathematical analysis of a time-domain DtN operator and the convergence analysis of the PML method for acoustic wave scattering were given.  In~\cite{LLWBWXZ2012}, the time-domain exact nonreflective boundary conditions in both two and three dimensions were computed and analyzed.  For the time-domain wave FSI problem, the rigorous mathematical study is still an open challenge, and related work include~\cite{XFPLYW2001,GCHFJSRJW2017,GLZ2017,BGL2018}. Concerning its numerical solutions, the authors of ~\cite{GCHSVTFJS2017} applied time-domain boundary integral equations to time-domain FSI problem and analyzed the resulting nonlocal initial-boundary problem, motivated by the time-harmonic FSI problems. Coupling methods are also utilized in ~\cite{OVEHA1991,XF2000,XFZX2004,CGMM2017,CGM2017} for solving the time-domain FSI problems.

   Instead of numerical methods mentioned above, in this paper, we focus on using discontinuous Galerkin (DG) methods, which has natural advantages for dealing with the time-domain FSI problem, see~\cite{DGMP2016}.  The original DG methods were proposed for the numerical solution of hyperbolic neutron transport equations, as well as for discontinuities in some elliptic and parabolic problems.  For second order wave equations, various DG methods have been proposed, which first reformulate the original wave problem to a first-order hyperbolic system, such as the local discontinuous Galerkin (LDG) methods in~\cite{BCCWS1989,XCS2013,Chou:2014}, and we refer to~\cite{Chung:2006,Chung:2009,Kaser:2008,WSBG2010} for a review of some other DG methods for the first order wave equations.
   The first DG method for the original second-order formulation of acoustic wave equation was proposed in~\cite{BRMFW2003}, which is based on a nonsymmetric formulation.  Here, we propose and analyze a symmetric interior penalty DG (SIPDG) method to solve the time-domain FSI problem.  It is the same method used in~\cite{MJGASDS2006} for the spatial discretization of the second-order scalar wave equation.  Compared to the nonsymmetric formulation in~\cite{BRMFW2003}, the symmetric discretization of the second-order form wave equation offers extra benefits such as a positive definite stiffness matrix and hence is free of any (unnecessary) damping.  One can refer to~\cite{Arnold:2002} for details of the DG methods for second order equations.  Finally we note that another DG formulation for wave equations in second order for is the energy based method proposed in~\cite{AH2018} and extended to the coupled acoustic-elastic problem in~\cite{AW2018}.

    \par{}The remainder of the paper is organized as follows. We first describe the original time-domain FSI problem in Section 2. Then in Section 3, the unbounded problem is reduced to a  bounded initial-boundary value problem.  In Section 4, we establish \textit{a priori} error estimates for the IPDG solution of the reduced problem. Numerical experiments are presented in Section 5 to confirm the theoretical results, and finally, a conclusion is made in Section 6.  For the sake of completeness, we provide the mathematical analysis towards well-posedness of the reduced problem introduced in Section 3 in the appendix.

\section{Model}
Here we study the same model as in~\cite{GCHFJSRJW2017}, where the authors gave mathematical analysis from the aspect of integral equation method. The statement of the model is as follows. Suppose $\Omega \subset \mathbb{R}^{2}$ is a bounded domain of a elastic body with boundary $\Gamma=\partial\Omega$, which is enclosed by the unbounded homogenous compressible inviscid fluid domain $\Omega^{+}=\mathbb{R}^{2} \backslash \overline{\Omega}$ (see  Figure~\ref{fig:fsi}), and a finite time interval $J = (0, T)$. Given an incident wave $\varphi^i$, the scattered wave $\varphi$ is generated by the induced solid $\Omega$ in the fluid domain $\Omega^{+}$.
In the fluid domain $\Omega^+$, the governing equations are the linearized Euler equation and the linearized equation of continuity
      \begin{equation}\label{eq:governed_equation}
      \rho_1\frac{\partial \mathbf{v}}{\partial t}+\nabla p = 0,\quad \frac{\partial p}{\partial t}+c^2\rho_1\text{div}~\mathbf{v} = 0,\quad(\mathbf{x},t)\in\Omega^{+}\times J,
      \end{equation}
      where $\mathbf{v}$ is the velocity field, $p$, $\rho_1$ and $c$ are the pressure, the density and the speed of sound in the fluid respectively.
   \begin{figure}
     \centering
     \includegraphics[height=5.0cm,width=8.5cm]{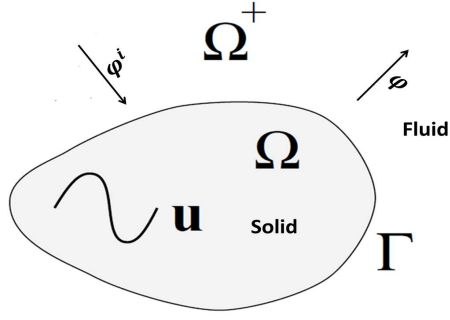}
     \caption{Illustration of fluid-solid interaction.}\label{fig:fsi}
   \end{figure}
   For an irrational flow, the velocity potential $\varphi$ s be the velocity potential satisfies:
   \begin{equation*}
     \mathbf{v}=-\nabla \varphi,\quad \text{and}\quad p=\rho_1\frac{\partial\varphi}{\partial t}
      \end{equation*}
      Then, the wave equation for $\varphi$ takes the form:
      \begin{equation}\label{eq:acoustic}
      \frac{1}{c^2}\frac{\partial^2 \varphi}{\partial t^2}-\Delta \varphi = 0, \qquad(\mathbf{x},t)\in\Omega^{+}\times J.
      \end{equation}
      \par{}The elastic wave in $\Omega$ is described by the displacement field $\mathbf{u}={(u_{1},u_{2})}^{T}$, which satisfies the dynamic linear elastic equation:
      \begin{equation}
        \label{eq:elastic}
        \begin{split}
            \rho_2 \frac{\partial^2 \mathbf{u}}{\partial t^2}-\text{div}(\sigma(\mathbf{u}))&= \mathbf{0},\qquad(\mathbf{x},t)\in\Omega\times J\\
      \sigma(\mathbf{u})&= \lambda \text{tr}(\varepsilon(\mathbf{u}))\mathbf{I}+2\mu\varepsilon(\mathbf{u})\\
      \varepsilon(\mathbf{u})&=\frac{1}{2}(\nabla \mathbf{u}+(\nabla \mathbf{u})^T).
    \end{split}
      \end{equation}
      Here $\rho_2$ is the constant density of the elastic body, which is assumed to be homogeneous and isotropic with the Lam\'{e} constants $\lambda$ and $\mu$ such that $3\lambda+2\mu>0$ and $\mu>0$.
      \par{}The velocity potential $\varphi$ in equation~\eqref{eq:acoustic} and the elastic displacement field $\mathbf{u}$ in equation~\eqref{eq:elastic} are coupled by the transmission conditions on $\Gamma\times J$, together with the homogeneous initial conditions, we can get an initial-boundary value problem:
      \begin{align}
      \rho_2 \frac{\partial^2 \mathbf{u}}{\partial t^2}-\text{div}(\sigma(\mathbf{u}))&= \mathbf{0}, &&\text{in~}\Omega\times J,\label{eq:ysfsi}\\
      \frac{1}{c^2}\frac{\partial^2 \varphi}{\partial t^2}-\Delta \varphi &= 0, &&\text{in~}\Omega^{+}\times J,\label{eq:ysfsi2}\\
      \sigma(\mathbf{u})\mathbf{n}&=-\rho_1(\frac{\partial \varphi}{\partial t}+\frac{\partial \varphi^i}{\partial t})\mathbf{n}, && \text{on~} \Gamma\times J,  \\
      \frac{\partial \varphi}{\partial\mathbf{n}} &= -\frac{\partial \varphi^i}{\partial\mathbf{n}}-\frac{\partial\mathbf{u}}{\partial t}\cdot\mathbf{n},&& \text{on~} \Gamma\times J,\\
      r^{\frac{1}{2}}\left(\frac{\partial \varphi}{\partial\mathbf{n}}+\frac{\partial \varphi}{\partial t}\right)&\rightarrow 0\quad\text{as~} r = |\mathbf{x}| \rightarrow \infty,&& \text{a.e.~}t\in J,\label{eq:ysfsi5} \\
      \mathbf{u}|_{t=0}=\left.\frac{\partial\mathbf{u}}{\partial t}\right|_{t=0}=\mathbf{0}, & \quad x \in \Omega \qquad  \text{~and~}\qquad \varphi|_{t=0}=\left.\frac{\partial \varphi}{\partial t}\right|_{t=0}=0,&&\, x \in \Omega^{+}.
      \end{align}
      Here $\mathbf{n}$ is the unit outward vector on $\Gamma$ from $\Omega$ towards $\Omega^+$, $\varphi^i$ is the given incident field and equation~\eqref{eq:ysfsi5} is the radiation condition for $\varphi$.

      \paragraph{}Before the discussion, we first introduce the definitions of some relevant Sobolev spaces and norms.  Let $L^2(\Omega)$ be the function space consisting of all square integrable functions over $\Omega$ equipped with the norm
      \[
        \|u\|_{0,\Omega}=\left(\int_{\Omega}|u(\mathbf{x},t)|^2 \,d\mathbf{x}\right)^{1/2}.
      \]
      For $s > 0$, the standard Sobolev space is denoted by
    \[
    H^{s}(\Omega) = \left\{D^\alpha u\in L^2(\Omega) \text{~for all~} |\alpha| \leq s\right\}
  \]
   with the norm
    \[
      \|u\|_{s,\Omega}=\left(\int_{\Omega}\sum\limits_{|\alpha|<s}|D^\alpha u|^2\,dx\right)^{1/2},
    \]
    and $H^s(\Gamma)$ the trace functional space for $\Gamma = \partial\Omega$ under the $L^2(\Gamma)$ inner produce
    \[
      \langle u, v\rangle_{\Gamma} = \int_\Gamma u v\,\gamma.
    \]
    It is clear to note that $H^{-s}(\Omega)$ and $H^{-s}(\Gamma)$ are the dual space of $H^{s}(\Omega)$ and $H^{s}(\Gamma)$.
    We also denote $\mathbf{L}^2(\Omega)={(L^2(\Omega))}^2$ and $\mathbf{H}^1(\Omega)={(H^1(\Omega))}^2$.  For any $\mathbf{u} = {(u_1,u_2)}^T\in\mathbf{H}^1(\Omega)$, the Frobenius norm is defined as:
      \[
      \|\nabla\mathbf{u}\|_{F(\Omega)}={\left(\sum\limits_{j=1}^2\int_{\Omega}|\nabla u_j|^2\,d\mathbf{x}\right)}^{1/2}.
      \]
      And a simple calculation gives:
      \begin{equation}
        \label{eq:Frobenius}
      \|\nabla\mathbf{u}\|^2_{F(\Omega)}+\|\text{div~}\mathbf{u}\|^2_{0,\Omega}\leq C\|\mathbf{u}\|^2_{1,\Omega},
      \end{equation}
      where $C$ is a positive constant.
      \par{}Furthermore, for $1\leq q\leq\infty$ we will make use of the Bochner space $L^q(J;H^1(\Omega))$, endowed with the norm:
      \[
        \|w(x,t)\|_{L^q(J;H^1(\Omega))}=\left\{\begin{array}{cc}
            {\left(\int_J\|w\|^q_{1,\Omega}\right)}^{1/q}& 1\leq q<\infty, \\
      \text{ess}\sup\limits_{0\leq t\leq T}\|w\|_{1,\Omega}\,dt &
  q=\infty,\end{array}\right.
      \]
      and the Bochner space $H^1(J; H^s(\Omega))$, endowed with norm:
        \[
          \|w(x,t)\|_{H^1(J;H^s(\Omega))} = {\left(\int_J {\left(\|w\|_{s,\Omega}^2 + \|w_t\|_{s,\Omega}^2\,dt\right)}\right)}^{1/2}.
        \]

\section{The reduced problem in bounded domain}
   \paragraph{}To reduce this exterior problem to a problem in a bounded domain, we impose the first order approximate boundary condition on the artificial boundary $\Gamma_R$ (see  Figure~\ref{fig:domain}):
   \begin{figure}
     \centering
     \includegraphics[height=7.0cm,width=7.5cm]{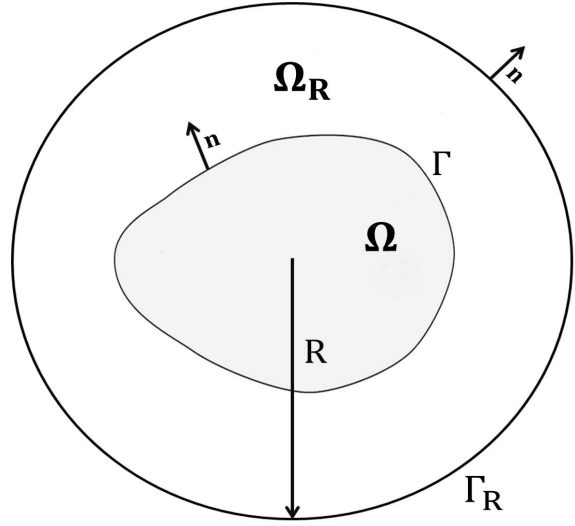}
     \caption{Bounded domain}\label{fig:domain}
   \end{figure}
   \[
     \frac{\partial \varphi}{\partial \mathbf{n}}=-\frac{\partial \varphi}{\partial t},\qquad \text{~on~}\Gamma_R \times J,
    \]
      where $\mathbf{n}$ is the outward unit vector from $\Omega_R$ on $\Gamma_R$. Then the reduced problem on the bounded domain reads:
      \begin{align}
      \rho_2 \frac{\partial^2 \mathbf{u}}{\partial t^2}-\text{div}(\sigma(\mathbf{u}))&= \mathbf{0}, &&\text{~in~} \Omega\times J,\label{eq:rdfsi1}\\
      \frac{1}{c^2}\frac{\partial^2 \varphi}{\partial t^2}-\Delta \varphi &= 0 ,&& \text{~in~} \Omega_{R}\times J,\label{eq:rdfsi2}\\
       \sigma(\mathbf{u})\mathbf{n}&= -\rho_1(\frac{\partial \varphi}{\partial t}+\frac{\partial \varphi^i}{\partial t})\mathbf{n}, && \text{~on~} \Gamma\times J,\label{eq:rdfsi3}\\
       \frac{\partial \varphi}{\partial\mathbf{n}} &= -\frac{\partial \varphi^i}{\partial\mathbf{n}}-\frac{\partial\mathbf{u}}{\partial t}\cdot\mathbf{n}, &&\text{~on~} \Gamma\times J,\label{eq:rdfsi4}\\
      \frac{\partial \varphi}{\partial \mathbf{n}}&= -\frac{\partial \varphi}{\partial t}, && \text{~on~} \Gamma_R\times J,\label{eq:rdfsi5}\\
      \mathbf{u}|_{t=0}=\left.\frac{\partial\mathbf{u}}{\partial t}\right|_{t=0} & =\mathbf{0},\,x \in \Omega, \qquad\quad\text{and} && \varphi|_{t=0}=\left.\frac{\partial \varphi}{\partial t}\right|_{t=0}=0,\, x \in \Omega_R. \label{eq:rdfsi6}
      \end{align}
   Then, we present the well-posedness and stability of the reduced problem in the follows.
   \subsection{Well-posedness and stability}
   \par{}First of all, we need to show that the reduced interaction problem in the bounded domain is well-posed and stable.
      \begin{theorem}\label{thm31}
       {Suppose the incident wave $\varphi^i\in H^1(J;L^2(\Omega_R))$, $\varphi^i|_{t=0}=0$ and $spt(\varphi^i)\subset\Omega_R\times J$. Then~\eqref{eq:rdfsi1}-\eqref{eq:rdfsi6} has a unique solution satisfying:
         \[
           \mathbf{u}\in L^2\left(J;\mathbf{H^1}(\Omega)\right)\cap H^1\left(J;\mathbf{L^2}(\Omega)\right),
         \]
         \[
           \varphi\in L^2\left(J;H^1(\Omega_R)\right)\cap H^1\left(J;L^2(\Omega_R)\right),
         \]
         and we have the following stability estimate:
      \begin{align}
      \nonumber
      &\,\max\limits_{t \in [0,T]}\left(\|\mathbf{\mathbf{u}}_{t}\|_{0,\Omega}+\|\nabla\mathbf{u}\|_{F(\Omega)}+\|\nabla\cdot\mathbf{u}\|_{0,\Omega}\right)\\
      \nonumber
      \leq&\, C\left(\left\|\varphi^i_t\right\|^2_{L^1(J;H^{-1/2}(\Gamma))}+\left\|\nabla\varphi^i\right\|^2_{L^1(J;H^{-1/2}(\Gamma))}+\max_{t\in[0,T]}\left\|\varphi_{tt}^i\right\|^2_{-\frac{1}{2},\Gamma} \right.\\
       &\qquad \left. +\max_{t\in[0,T]}\left\|\nabla\varphi_{t}^i\right\|^2_{-\frac{1}{2},\Gamma}
       +\|\varphi_{ttt}^i\|^2_{L^1(J;H^{-1/2}(\Gamma))}+\|\nabla\varphi_{tt}^i\|^2_{L^1(J;H^{-1/2}(\Gamma))}\right), \label{eq:stability1.1}
    \end{align}
    and
    \begin{align}
      \nonumber
      &\, \max\limits_{t\in[0,T]}\left(\|\varphi_{t}\|_{0,\Omega_R}+\|\nabla \varphi\|_{0,\Omega_R}\right) \\
      \leq&\, C\left(\left\|\varphi^i_t\right\|^2_{L^1(J;H^{-1/2}(\Gamma))}+\left\|\nabla\varphi^i\right\|^2_{L^1(J;H^{-1/2}(\Gamma))}+\max_{t\in[0,T]}\left\|\varphi_{tt}^i\right\|^2_{-\frac{1}{2},\Gamma} \right.\\
       &\qquad \left. +\max_{t\in[0,T]}\left\|\nabla\varphi_{t}^i\right\|^2_{-\frac{1}{2},\Gamma}
       +\|\varphi_{ttt}^i\|^2_{L^1(J;H^{-1/2}(\Gamma))}+\|\nabla\varphi_{tt}^i\|^2_{L^1(J;H^{-1/2}(\Gamma))}\right). \label{eq:stability1.2}
      \end{align}
      }
      \end{theorem}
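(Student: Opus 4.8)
\emph{Proof plan.} The plan is to prove existence by a Faedo--Galerkin construction and to establish~\eqref{eq:stability1.1}--\eqref{eq:stability1.2} as the a priori bounds that both let the Galerkin limit pass and, applied to the difference of two solutions, give uniqueness. First I would write the weak form: multiply~\eqref{eq:rdfsi1} by $\mathbf{w}\in\mathbf{H}^1(\Omega)$, multiply~\eqref{eq:rdfsi2} by $\psi\in H^1(\Omega_R)$, integrate over $\Omega$ resp.\ $\Omega_R$, and integrate by parts, substituting the interface conditions~\eqref{eq:rdfsi3}--\eqref{eq:rdfsi4} and the absorbing condition~\eqref{eq:rdfsi5}. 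This produces the coupling term $\rho_1\langle\varphi_t+\varphi^i_t,\mathbf{n}\cdot\mathbf{w}\rangle_\Gamma$ on the elastic side and $-\langle\nabla\varphi^i\cdot\mathbf{n}+\mathbf{u}_t\cdot\mathbf{n},\psi\rangle_\Gamma+\int_{\Gamma_R}\varphi_t\psi$ on the acoustic side. Expanding $(\mathbf{u},\varphi)$ in a countable basis of $\mathbf{H}^1(\Omega)\times H^1(\Omega_R)$ reduces the problem to a second--order linear ODE system with smooth coefficients, which has a unique local solution that extends globally once the bounds below are available.

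For the energy identity I would test the elastic weak form with $\mathbf{w}=\mathbf{u}_t$ and the acoustic one with $\psi=\rho_1\varphi_t$ and add. The essential observation is that the two copies of $\rho_1\langle\varphi_t,\mathbf{n}\cdot\mathbf{u}_t\rangle_\Gamma$ arising with opposite signs cancel, leaving
\begin{align*}
&\tfrac12\tfrac{d}{dt}\Bigl(\rho_2\|\mathbf{u}_t\|_{0,\Omega}^2+a(\mathbf{u},\mathbf{u})+\tfrac{\rho_1}{c^2}\|\varphi_t\|_{0,\Omega_R}^2+\rho_1\|\nabla\varphi\|_{0,\Omega_R}^2\Bigr)+\rho_1\|\varphi_t\|_{0,\Gamma_R}^2\\
&\qquad=-\rho_1\langle\varphi^i_t,\mathbf{u}_t\cdot\mathbf{n}\rangle_\Gamma+\rho_1\langle\nabla\varphi^i\cdot\mathbf{n},\varphi_t\rangle_\Gamma,
\end{align*}
with $a(\mathbf{u},\mathbf{w})=\int_\Omega\sigma(\mathbf{u}):\varepsilon(\mathbf{w})\,d\mathbf{x}$; the $\Gamma_R$--term is nonnegative and is simply discarded. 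The difficulty is already visible: the right--hand side pairs the data with the traces of $\mathbf{u}_t$ and $\varphi_t$, i.e.\ with quantities that need $\|\mathbf{u}_t\|_{1,\Omega}$ and $\|\varphi_t\|_{1,\Omega_R}$ to be bounded, whereas this ``level--$0$'' energy only dominates $\mathbf{u}_t,\varphi_t$ in $L^2$. One cannot close here directly.

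The remedy is to run the same argument on the system differentiated once in $t$. Since the data vanish at $t=0$, reading off~\eqref{eq:rdfsi1}--\eqref{eq:rdfsi2} gives $\mathbf{u}_{tt}(0)=0$ and $\varphi_{tt}(0)=0$, so $(\mathbf{u}_t,\varphi_t)$ solves a system of the same form with $\varphi^i$ replaced by $\varphi^i_t$ and with vanishing initial data; testing with $(\mathbf{u}_{tt},\rho_1\varphi_{tt})$ and using the same cancellation yields a ``level--$1$'' identity for $(\mathbf{u}_{tt},\nabla\mathbf{u}_t,\varphi_{tt},\nabla\varphi_t)$ whose interface forcing is $-\rho_1\langle\varphi^i_{tt},\mathbf{u}_{tt}\cdot\mathbf{n}\rangle_\Gamma+\rho_1\langle\nabla\varphi^i_t\cdot\mathbf{n},\varphi_{tt}\rangle_\Gamma$. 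I would then integrate both identities in $t$ and bound every interface pairing by the $H^{-1/2}(\Gamma)$--$H^{1/2}(\Gamma)$ duality together with the trace inequality $\|\gamma v\|_{1/2,\Gamma}\le C\|v\|_{1,\Omega}$. In the level--$1$ identity the surviving traces $\mathbf{u}_{tt}\cdot\mathbf{n}$, $\varphi_{tt}$ are still uncontrolled, so there I integrate by parts in time, transferring one $t$--derivative onto the data; the boundary contributions vanish at $t=0$ because $\mathbf{u}_t(0)=\varphi_t(0)=0$, and what remains are the endpoint pairings $\langle\varphi^i_{tt}(t),\mathbf{u}_t(t)\cdot\mathbf{n}\rangle_\Gamma$, $\langle\nabla\varphi^i_t(t)\cdot\mathbf{n},\varphi_t(t)\rangle_\Gamma$ and the time integrals of $\langle\varphi^i_{ttt},\mathbf{u}_t\cdot\mathbf{n}\rangle_\Gamma$, $\langle\nabla\varphi^i_{tt}\cdot\mathbf{n},\varphi_t\rangle_\Gamma$, whose unknown factors now sit at the ``level--$0$'' regularity and are controlled: $\|\mathbf{u}_t\|_{1,\Omega}^2\le C\bigl(a(\mathbf{u}_t,\mathbf{u}_t)+\|\mathbf{u}_t\|_{0,\Omega}^2\bigr)$ by Korn's second inequality (valid on $\Omega$, which carries no Dirichlet part), with $\|\mathbf{u}_t(t)\|_{0,\Omega}\le\int_0^t\|\mathbf{u}_{ss}\|_{0,\Omega}\,ds$, and likewise for $\varphi_t$ via Poincar\'e. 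Absorbing these factors (Young's inequality, moving a small multiple of the level--$1$ energy back to the left) bounds the level--$1$ energy by the four data norms $\max_{[0,T]}\|\varphi^i_{tt}\|^2_{-1/2,\Gamma}$, $\max_{[0,T]}\|\nabla\varphi^i_t\|^2_{-1/2,\Gamma}$, $\|\varphi^i_{ttt}\|^2_{L^1(J;H^{-1/2}(\Gamma))}$, $\|\nabla\varphi^i_{tt}\|^2_{L^1(J;H^{-1/2}(\Gamma))}$. Feeding this into the level--$0$ identity --- where I do \emph{not} integrate by parts in time but simply estimate $|\langle\varphi^i_t,\mathbf{u}_t\cdot\mathbf{n}\rangle_\Gamma|\le\|\varphi^i_t\|_{-1/2,\Gamma}\|\mathbf{u}_t\|_{1,\Omega}$ and analogously for the $\nabla\varphi^i$ term, integrate in $t$, and apply Young --- contributes the remaining data norms $\|\varphi^i_t\|^2_{L^1(J;H^{-1/2}(\Gamma))}$ and $\|\nabla\varphi^i\|^2_{L^1(J;H^{-1/2}(\Gamma))}$. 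The left--hand sides of~\eqref{eq:stability1.1}--\eqref{eq:stability1.2} are then dominated: $\|\mathbf{u}_t\|_{0,\Omega}$, $\|\varphi_t\|_{0,\Omega_R}$, $\|\nabla\varphi\|_{0,\Omega_R}$ sit in the level--$0$ energy, while $\|\nabla\mathbf{u}\|_{F(\Omega)}$ and $\|\nabla\cdot\mathbf{u}\|_{0,\Omega}$ are recovered from $\int_0^t\|\nabla\mathbf{u}_s\|_{F(\Omega)}\,ds$ and the level--$1$ control of $\|\mathbf{u}_s\|_{1,\Omega}$ through~\eqref{eq:Frobenius}.

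The main obstacle is exactly this loss of one derivative between the natural energy and the interface forcing, which forces the two--level scheme and the careful sequencing --- the level--$1$ estimate closed by integrating by parts in time, its output then feeding the level--$0$ estimate --- so as to avoid a circular dependence; making the constants absorb cleanly is where the bookkeeping is delicate. Minor points still to attend to are the use of Korn's inequality on a domain with no Dirichlet boundary (so the rigid--motion term must be kept and absorbed through the time integral of $\mathbf{u}_s$), the identification $\mathbf{u}_{tt}(0)=\varphi_{tt}(0)=0$ that makes the differentiated problem have zero data, and the observation that the estimate is only meaningful when $\varphi^i$ is regular enough for its right--hand side to be finite --- strictly more than the bare hypothesis $\varphi^i\in H^1(J;L^2(\Omega_R))$, which already suffices for the existence and uniqueness part. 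Finally, passing to the Galerkin limit using the weak-$*$ compactness furnished by the uniform bounds, and deducing uniqueness from linearity together with the estimate applied to a difference of solutions, are routine.
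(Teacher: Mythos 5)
Your proposal is correct in substance, and its core --- the stability estimate --- follows essentially the same route as the paper: a level-$0$ energy identity obtained by testing with $(\mathbf{u}_t,\varphi_t)$ (with the interface pairings $\langle\varphi_t\mathbf{n},\mathbf{u}_t\rangle_\Gamma$ cancelling and the $\Gamma_R$-term discarded as nonnegative), a level-$1$ identity for the $t$-differentiated system with zero initial data, an integration by parts in time at level $1$ to shift one derivative onto $\varphi^i$ (producing exactly the six data norms in \eqref{eq:stability1.1}--\eqref{eq:stability1.2}), and recovery of the undifferentiated quantities from the differentiated ones via the vanishing initial conditions; these are precisely the paper's energies $E(t)$ and $F(t)$ and its estimates \eqref{eq:stable5}, \eqref{eq:stable9}--\eqref{eq:stable13}. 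Where you genuinely diverge is the existence--uniqueness part: the paper takes the Laplace transform of \eqref{eq:rdfsi1}--\eqref{eq:rdfsi5}, proves an $s$-explicit coercivity bound for the resulting sesquilinear form via Lax--Milgram (Lemma~\ref{lem33}), and invokes the Paley--Wiener--Schwartz-type inversion theorem (Lemma~\ref{lem32}) together with Parseval's identity to obtain a unique causal solution in $L^2(J;H^1)\cap H^1(J;L^2)$, whereas you propose a Faedo--Galerkin construction closed by the very energy bounds above. Both are legitimate; the Laplace route gives well-posedness under weaker temporal regularity of the data (the frequency-domain bound only needs $\tilde\varphi^i\in H^{1/2}(\Gamma)$ for each $s$, matching the theorem's stated hypothesis), while your Galerkin route is more elementary and unifies existence with the stability estimate, at the price of needing the data regular enough for the two-level energies to be finite --- a tension you correctly flag, and which is in fact already latent in the paper's own hypotheses. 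Two further points you raise that the paper glosses over are worth keeping: the explicit appeal to Korn's second inequality (with the rigid-motion term absorbed through $\|\mathbf{u}_t(t)\|_{0,\Omega}\le\int_0^t\|\mathbf{u}_{ss}\|_{0,\Omega}\,ds$) to pass from the energy seminorms to $\|\mathbf{u}_t\|_{1,\Omega}$ before invoking the trace inequality, and the verification $\mathbf{u}_{tt}(0)=\varphi_{tt}(0)=0$ that legitimizes the zero initial data of the differentiated system.
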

Although different boundary conditions are considered to develop the reduced problem, the proof of above theorem~\ref{thm31} is analogous to the proof of Theorem 3.2 in~\cite{BGL2018}.   We omit to present the proof of Theorem~\ref{thm31} in this section, and report the detailed proof of Theorem~\ref{thm31} in the appendix for the sake of completeness.

   \subsection{Variational formulation}
   \paragraph{}The standard variational formulation of problem \eqref{eq:rdfsi1}-\eqref{eq:rdfsi6} is as follows: For given incident wave $\varphi^i$, find
\[
  (\mathbf{u},\varphi)\in L^2\left(J;\mathbf{H^1}(\Omega)\right)\times L^2\left(J;H^1(\Omega_R)\right),
\]
with
\[
  \mathbf{u}_{t}\in L^2\left(J;\mathbf{L^{2}}(\Omega)\right),\,\varphi_{t}\in L^2\left(J;L^{2}(\Omega_R)\right),\, \mathbf{u}_{tt}\in L^2\left(J;\mathbf{H}^{-1}(\Omega)\right),\,\varphi_{tt}\in L^2\left(J;H^{-1}(\Omega_R)\right),
\]
such that, $\forall~\mathbf{v}\in \mathbf{H^1}(\Omega)\forall~\phi\in H^1(\Omega_R)$:
   \begin{eqnarray}\label{eq:VF1}
     \frac{\rho_2}{\rho_1}\left\langle\mathbf{u}_{tt},\mathbf{v}\right\rangle+\frac{1}{c^2}\left\langle\varphi_{tt},\phi\right\rangle+a(\mathbf{u},\varphi;\mathbf{v},\phi)=L(\mathbf{v},\phi).
   \end{eqnarray}
with the initial conditions
\[
  \mathbf{u}|_{t=0}=\mathbf{u}_t|_{t=0}=\mathbf{0},\quad \varphi|_{t=0}=\varphi_t|_{t=0}=0.
\]
Here $\langle\cdot,\cdot\rangle$ denotes the duality pairing between spaces $H^{-1}$ and $H^0$ with the associated domain, and $a(\mathbf{u},\varphi;\mathbf{v},\phi)$ and $L(\mathbf{v},\phi)$ are defined as:
   \begin{eqnarray}\label{eq:VF2}
    \nonumber
     a(\mathbf{u},\varphi;\mathbf{v},\phi)&=&\frac{\lambda}{\rho_1}(\nabla\cdot\mathbf{u},\nabla\cdot \mathbf{v})_{0,\Omega}+\frac{2\mu}{\rho_1}\left(\varepsilon(\mathbf{u}):\varepsilon(\mathbf{v})\right)_{F(\Omega)}+\left(\varphi_t\mathbf{n},\mathbf{v}\right)_{0,\Gamma}\\
     &&+(\nabla\varphi,\nabla\phi)_{0,\Omega_R}-(\mathbf{u}_t\cdot\mathbf{n},\phi)_{0,\Gamma} +(\varphi_t,\phi)_{0,\Gamma_R},\\
     L(\mathbf{v},\phi)&=&\left(\frac{\partial\varphi^i}{\partial\mathbf{n}},\phi\right)_{0,\Gamma}-\left(\varphi_t^i\mathbf{n},\mathbf{v}\right)_{0,\Gamma}
   \end{eqnarray}
   with $(\cdot,\cdot)$ the standard inner product in $L^2$-space.

\section{The IPDG method}
   \subsection{Spaces, jumps and averages}
   \paragraph{}Assume that $\Omega$ with Lipschitz boundary $\Gamma$ is regularly divided into disjoint elements ${E}$ by mesh ${\mathbf{\mathcal{E}_h}}$ such that $\overline{\Omega}=\bigcup_{E\in\mathbf{\mathcal{E}_h}} \overline{E}$, where $E$ is a triangle or quadrilateral in 2D, or a tetrahedron or hexahedron in 3D.  Similarly, The regular meshes $\mathcal{K}_h$ partitions the fluid domain $\Omega_R$ into disjoint elements $K$ such that $\overline{\Omega}_R =\bigcup_{K\in\mathcal{K}_h}\overline{K}$. The diameter of element $E/K$ is denoted by $h_{E/K}$, and $h$ is the mesh size given by $h=\max\limits_{E\in \mathbf{\mathcal{E}}_h,K\in\mathcal{K}_h}(h_E,h_K)$.
      \par{}The boundary of the elastic body $\Omega$, i.e., $\Gamma$ is approximated by the boundary edges of the subdivision: The set of boundary edges of $\mathbf{\mathcal{E}}_h$ is $\Gamma_h:=\partial E\bigcap \Gamma$, since $\Omega\bigcap\Omega_R=\Gamma$, the edges in $\Gamma_h$ are also the boundary edges of $\mathcal{K}_h$, so the sets of boundary edges of $\mathcal{K}_h$ are $\Gamma_h\bigcup\Gamma_h^R$, $\Gamma_h^R:=\partial K\bigcap\Gamma_R$. We denote by $\Gamma_I^1$ and $\Gamma_I^2$ the set of interior edges of the subdivision $\mathbf{\mathcal{E}}_h$ and $\mathcal{K}_h$ respectively.
      \par{}For any real number $m$, the broken Sobolev spaces $H^m(\mathcal{E}_h)$ and $H^m(\mathcal{K}_h)$ are defined as :
      \[
        \mathbf{H}^m(\mathbf{\mathcal{E}}_h)=\left\{\mathbf{v}\in \mathbf{L^2}(\Omega):\forall E\in \mathbf{\mathcal{E}}_h,\mathbf{v}|_E\in \mathbf{H}^m(E)\right\},
      \]
      and
      \[
        H^m(\mathcal{K}_h)=\left\{\varphi\in L^2(\Omega_R):\forall K\in\mathcal{K}_h,\varphi|_K\in H^m(K)\right\},
      \]
      with the broken Sobolev norms:
      \[
        \|\mathbf{v}\|_{m,\mathcal{E}_h}=\left(\sum\limits_{E\in\mathcal{E}_h}\|\mathbf{v}\|^2 _{m,E}\right)^{\frac{1}{2}},\quad|\varphi\|_{m,\mathcal{K}_h}=\left(\sum\limits_{K\in\mathcal{K}_h} \|\varphi\|^2_{m,K}\right)^{\frac{1}{2}}.
      \]
      In particular, we also define the broken gradient seminorm:
      \[
        \|\nabla \mathbf{v}\|_{0,\mathcal{E}_h}=\left(\sum\limits_{E\in\mathcal{E}_h}\|\nabla \mathbf{v}\|^2_{0,E}\right)^{\frac{1}{2}},~\|\nabla \varphi\|_{0,\mathcal{K}_h}=\left(\sum\limits_{K\in\mathcal{K}_h}\|\nabla \varphi\|^2_{0,K}\right)^{\frac{1}{2}}.
      \]
      \par{}If $\varphi\in H^m(\mathcal{K}_h)$, the trace of $\varphi$ along any side of each element $K \in \mathcal{K}_h$ is well defined.  Let $e$ be the edge between the elements $K_1$ and $K_2$, then the jump and average of a scalar function $\varphi$ on $e$ are given by:
      \[
        \{\varphi\}=\frac{1}{2}\left(\varphi|_{K_1}+\varphi|_{K_2}\right),\quad \text{and}\quad [\varphi]=\varphi|_{K_1}\bn_{K_1}+ \varphi|_{K_2}\bn_{K_2},
      \]
      respectively, where $\bn_{K_1}$ is the outward unit normal from $K_1$ to $K_2,$ and likewise for $\bn_{K_2}$.  On the boundary edge $e\in\Gamma_h$ or $e\in\Gamma_h^R$, we extend the definition: $\{\varphi\}=[\varphi]=\varphi|_{K^e}$, where $K_e$ is the element that $K_e\bigcap\Gamma_h=e$ or $K_e\bigcap\Gamma_h^R=e$.

      \par{}Similarly, for a vector valued function $\mathbf{v}\in \mathbf{H}^s(\mathbf{\mathcal{E}}_h)$, the jump and average of a vector function $\mathbf{v}$ on $e$ are given by:
      \[
        \{\mathbf{v}\}=\frac{1}{2}(\mathbf{v}|_{E_1}+\mathbf{v}|_{E_2}),\quad \text{and} \quad [\mathbf{v}]=\mathbf{v}|_{E_1}\cdot\mathbf{n}_{E_1}+ \mathbf{v}|_{E_2}\cdot\mathbf{n}_{E_2},
      \]
      respectively, where $\mathbf{n}_{E_1}$ is outward unit normal from $E_1$ to $E_2,$ and likewise for $\mathbf{n}_{E_2}$. Similarly, on the boundary edge $e$: $\{\mathbf{v}\}=[\mathbf{v}]=\mathbf{v}|_{E^e}$, where $E_e$ is the element that $E_e\bigcap\Gamma_h=e$.

   \subsection{Spatial discretization}
   \paragraph{}For given partitions $\mathbf{\mathcal{E}}_h$ of $\Omega$, $\mathcal{K}_h$ of $\Omega_R$ and an approximation order $k\geq 1$, we can approximate the solution $(\mathbf{u},\varphi)$ of~\eqref{eq:rdfsi1}-\eqref{eq:rdfsi6} in the finite element subspaces
      \[
        \mathbf{\mathcal{D}}_k(\mathbf{\mathcal{E}}_h):=\left\{\mathbf{v}\in \mathbf{L^2}(\Omega):~\forall E\in \mathbf{\mathcal{E}}_h,~v_i|_E\in\mathbb{P}_k(E),i=1,2\right\},
      \]
and
      \[
        \mathcal{D}_k(\mathcal{K}_h):=\left\{\varphi\in \mathbf{L^2}(\Omega_R):~\forall K\in \mathcal{K}_h,~\varphi|_K\in\mathbb{P}_k(K)\right\},
      \]
      of the broken Sobolev space $H^m(\mathcal{E}_h)$ and $H^m(\mathcal{F}_h)$ for $m>3/2$.  Here $\mathbb{P}_k$ denotes the space of polynomials of degree at most $k$.

      \par{}Then we consider the following semidiscrete DG approximation of~\eqref{eq:rdfsi1}-\eqref{eq:rdfsi6}: find $\mathbf{u}^h \in \mathbf{\mathcal{D}}_k(\mathbf{\mathcal{E}}_h)$ and $\varphi^h \in \mathcal{D}_k(\mathcal{K}_h)$ such that
      \begin{eqnarray}\label{eq:IPDGVF1}
     \frac{\rho_2}{\rho_1}\left\langle\mathbf{u^h}_{tt},\mathbf{v}\right\rangle+\frac{1}{c^2}\left\langle\varphi^h_{tt},\phi\right\rangle+a_h(\mathbf{u}^h,\varphi^h;\mathbf{v},\phi)=L_h(\mathbf{v},\phi),
   \end{eqnarray}
   holds for any $\mathbf{v} \in \mathbf{\mathcal{D}}_k(\mathbf{\mathcal{E}}_h)$ and $\phi \in \mathcal{D}_k(\mathcal{K}_h)$.  Here, the discrete bilinear form $a_h(\mathbf{u}^h,\varphi^h;\mathbf{v},\phi)$ and linear form $L_h(\mathbf{v},\phi)$ are given by the IPDG discretization as:
   \begin{eqnarray}\label{eq:IPDG2}
    \nonumber
    a_h(\mathbf{u}^h,\varphi^h;\mathbf{v},\phi)&=&\sum\limits_{E\in\mathbf{\mathcal{E}}_h}\left(\frac{\lambda}{\rho_1}(\nabla\cdot\mathbf{u}^h,\nabla\cdot \mathbf{v})_{0,E}+\frac{2\mu}{\rho_1}\left(\varepsilon(\mathbf{u}^h):\varepsilon(\mathbf{v})\right)_{F(E)}\right)\\
     \nonumber
     &&\,\,-\sum\limits_{e\in \Gamma_I^1}\int_e\{\sigma(\mathbf{u}^h)\cdot\mathbf{n}\}[\mathbf{v}]\,dS-\sum\limits_{e\in \Gamma_I^1}\int_e\{\sigma(\mathbf{v})\cdot\nu\}[\mathbf{u}^h]\,dS
     \\
     \nonumber
     &&\,\,+\sum\limits_{K\in\mathcal{K}_h}(\nabla \varphi^h,\nabla\phi)_{0,K}-\sum\limits_{e\in \Gamma_I^2}\int\limits_e\left\{\frac{\partial \varphi^h}{\partial\mathbf{n}}\right\}[\phi]\,dS-\sum\limits_{e\in \Gamma_I^2}\int_e\left\{\frac{\partial \phi}{\partial\mathbf{n}}\right\}[\varphi^h]\,dS\\
     \nonumber
     &&\,\,+\sum\limits_{e\in\Gamma_I^2} \int_e \frac{\alpha}{|e|^{\beta}}[\varphi^h][\phi]\,dS+\sum\limits_{e\in\Gamma_I^1} \int_e \frac{\alpha}{|e|^{\beta}}[\mathbf{u}^h][\mathbf{v}]\,dS+\sum\limits_{e\in\Gamma_h}\int_e\varphi^h_t\mathbf{n}\mathbf{v}\,dS\\
     &&\,\,-\sum\limits_{e\in\Gamma_h}\int_e\mathbf{u}^h_t\cdot\mathbf{n}\phi\, dS+\sum\limits_{e\in\Gamma_h^R} \int_{e}\varphi^h_t\phi \,dS\\
     L_h(\mathbf{v},\phi)&=& \sum\limits_{e\in\Gamma_h}\int_e\frac{\partial\varphi^i}{\partial\mathbf{n}}\phi\, dS-\sum\limits_{e\in\Gamma_h}\int_e\varphi_t^i\mathbf{n}\mathbf{v}\,dS.
   \end{eqnarray}

   \par{}The interior penalty stabilization function $\frac{\alpha}{|e|^{\beta}}$ penalizes the jumps over the edges of $\mathbf{\mathcal{E}}_h$ and $\mathcal{K}_h$, where $\alpha$ is a positive parameter independent of the local mesh sizes. Here $|e|$ simply means the length of $e$ and we have
   \[
     |e|\leq h_{E/K}\leq h,\quad \forall e\in\partial E \text{~or~} \partial K,
   \]
   with $\beta > 0$, and when $\beta > 1$ the method is superpenalized.

   \par{}We define the space $\mathbf{V}_1(h)=\mathbf{H}^1(\Omega)+\mathbf{\mathcal{D}}_k(\mathbf{\mathcal{E}}_h)$ and $V_2(h)=H^1(\Omega_R)+\mathcal{D}_k(\mathcal{K}_h)$. On $\mathbf{V}_1(h)$ and $V_2(h)$, we define the DG energy norm as
   \[
     \|\mathbf{v}\|_h^2 := \sum\limits_{E\in\mathbf{\mathcal{E}}_h}\left(\left\|\sqrt{\frac{2\mu}{\rho_1}}\varepsilon(\mathbf{v})\right\|^2_{F(E)}+\left\|\sqrt{\frac{\lambda}{\rho_1}}\nabla\cdot\mathbf{v}\right\|^2_{0,E}\right) +\frac{1}{h}\sum\limits_{e\in\Gamma_h\cup\Gamma_I^1}\left\|[\mathbf{v}]\right\|^2_{0,e},
   \]
   and
   \[
     \|\phi\|_h^2 := \sum\limits_{K\in\mathcal{K}_h}\|\nabla\varphi\|_{0,K}^2 + \frac{1}{h}\sum\limits_{e\in\Gamma_h\cup\Gamma_I^2}\|[\phi]\|^2_{0,e}.
\]

\paragraph{}The consistency of the scheme is straightforward since the jumps at element boundaries vanishes when $\mathbf{u},\mathbf{v}\in \mathbf{H}^m(\mathbf{\mathcal{E}}_h)$ and $\varphi,\phi\in H^m(\mathcal{K}_h)$ for $m>2/3$. Next we will discuss the property of the bilinear form $ a_h(\cdot,\cdot;\cdot,\cdot)$.  For more details about the IPDG method, one can refer to~\cite{MJGASDS2006}

\begin{lemma}[Coercivity]\label{lemma4.1}{There exists a positive constant $C_{\text{coer}}$ independent of $h$ such that for all $\mathbf{v}\in\mathbf{V}_1(h)$ and $\phi \in V_2(h)$
    \begin{eqnarray}\label{eq:coericity}
    \nonumber
    a_h(\mathbf{v},\phi;\mathbf{v},\phi)&\geq& C_{\text{coer}}\left(\sum\limits_{E\in\mathbf{\mathcal{E}}_h}\|\varepsilon(\mathbf{v})\|_{F(E)}^2+\sum\limits_{e\in\Gamma_h\cup\Gamma^1_I}\|[\mathbf{v}]\|_{0,e}^{2}\right.\\ &&\quad\left.+\sum\limits_{K\in\mathcal{K}_h}\|\nabla\phi\|_{0,K}^{2}+\sum\limits_{e\in\Gamma_h\cup\Gamma^2_I}\|[\phi]\|_{0,e}^2\right).
    \end{eqnarray}
    }
    \end{lemma}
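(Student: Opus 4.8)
\emph{Proof plan.} The principal (symmetric, time–derivative–free) part of $a_h$ decouples into a linear–elasticity form in $\mathbf{v}$ and a Laplace form in $\phi$, so the plan is to reduce Lemma~\ref{lemma4.1} to the two classical symmetric–interior–penalty coercivity estimates, one on $\mathbf{V}_1(h)$ and one on $V_2(h)$. The first step is to insert $(\mathbf{v},\phi)$ into both slots of $a_h$: since $\mathbf{V}_1(h)\times V_2(h)$ consists of functions of the space variable only, the three boundary integrals over $\Gamma_h$ and $\Gamma_h^R$ each carry a vanishing time derivative of the argument and drop out, while the two consistency pairs merge into $-2\sum_{e\in\Gamma_I^1}\int_e\{\sigma(\mathbf{v})\cdot\mathbf{n}\}[\mathbf{v}]\,dS$ and $-2\sum_{e\in\Gamma_I^2}\int_e\{\partial\phi/\partial\mathbf{n}\}[\phi]\,dS$. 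What survives is the sum of the broken volume terms $\sum_E\big(\tfrac{\lambda}{\rho_1}\|\nabla\cdot\mathbf{v}\|^2_{0,E}+\tfrac{2\mu}{\rho_1}\|\varepsilon(\mathbf{v})\|^2_{F(E)}\big)+\sum_K\|\nabla\phi\|^2_{0,K}$, the two nonnegative interior–edge penalty sums $\sum_e\int_e\tfrac{\alpha}{|e|^\beta}|[\cdot]|^2$, and these two consistency sums; note that no Korn inequality is needed, since the energy norm is assembled directly from $\varepsilon(\mathbf{v})$ and $\nabla\cdot\mathbf{v}$.

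The heart of the argument is to show that the consistency sums are dominated by an arbitrarily small fraction of the volume terms plus an arbitrarily small fraction of the penalty sums, provided $\alpha$ is large enough. For the elastic part I would work edge by edge: on $e\in\Gamma_I^1$ between $E_1$ and $E_2$, Cauchy–Schwarz gives $|\int_e\{\sigma(\mathbf{v})\cdot\mathbf{n}\}[\mathbf{v}]|\le\|\{\sigma(\mathbf{v})\cdot\mathbf{n}\}\|_{0,e}\,\|[\mathbf{v}]\|_{0,e}$; a discrete trace/inverse inequality $\|\sigma(\mathbf{v})\cdot\mathbf{n}\|^2_{0,\partial E}\le C_{\mathrm{tr}}\,h_E^{-1}\|\sigma(\mathbf{v})\|^2_{0,E}$, combined with the elementwise bound $\|\sigma(\mathbf{v})\|_{0,E}\le C(\lambda,\mu)\|\varepsilon(\mathbf{v})\|_{F(E)}$ (using $\mu>0$, $3\lambda+2\mu>0$ and $|\nabla\cdot\mathbf{v}|\le\sqrt{d}\,|\varepsilon(\mathbf{v})|_F$), bounds $\|\{\sigma(\mathbf{v})\cdot\mathbf{n}\}\|^2_{0,e}$ by $C\,h^{-1}\big(\|\varepsilon(\mathbf{v})\|^2_{F(E_1)}+\|\varepsilon(\mathbf{v})\|^2_{F(E_2)}\big)$. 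A weighted Young inequality with parameter $\delta$ then splits $2|\int_e\{\sigma(\mathbf{v})\cdot\mathbf{n}\}[\mathbf{v}]|$ into $\delta C\big(\|\varepsilon(\mathbf{v})\|^2_{F(E_1)}+\|\varepsilon(\mathbf{v})\|^2_{F(E_2)}\big)$ and $(C/\delta)\,h^{-1}\|[\mathbf{v}]\|^2_{0,e}$. Summing over edges — shape regularity gives a bounded number of faces per element, so the edge sum is a fixed multiple of the volume sum — and using $|e|\le h\le1$ with $\beta\ge1$ so that $h^{-1}\le|e|^{-\beta}$, the first piece is absorbed into $\tfrac{2\mu}{\rho_1}\sum_E\|\varepsilon(\mathbf{v})\|^2_{F(E)}$ by taking $\delta$ small, and the second into $\alpha\sum_{e\in\Gamma_I^1}\int_e|e|^{-\beta}|[\mathbf{v}]|^2$ by taking $\alpha>C/\delta$. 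The same computation with $\{\partial\phi/\partial\mathbf{n}\}$ on $\Gamma_I^2$ and the trace inequality $\|\partial\phi/\partial\mathbf{n}\|^2_{0,\partial K}\le C_{\mathrm{tr}}h_K^{-1}\|\nabla\phi\|^2_{0,K}$ disposes of the acoustic part; collecting the surviving positive fractions gives the stated inequality, with $C_{\mathrm{coer}}$ depending only on $\lambda,\mu,\rho_1$, $C_{\mathrm{tr}}$, the shape–regularity constant and the fixed choice of $\delta,\alpha$ (the boundary–edge contributions to the stated bound being subsumed by the same trace estimates and the inequality $h\le1$).

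I expect the main technical obstacle to be that the trace/inverse inequalities above hold for piecewise polynomials, whereas $\mathbf{V}_1(h)=\mathbf{H}^1(\Omega)+\mathbf{\mathcal{D}}_k(\mathbf{\mathcal{E}}_h)$ (and $V_2(h)$) also contains genuinely $H^1$, non–polynomial functions. The remedy is to decompose any $\mathbf{v}\in\mathbf{V}_1(h)$ as $\mathbf{v}=\mathbf{v}^c+\mathbf{v}^d$ with $\mathbf{v}^c\in\mathbf{H}^1(\Omega)$ and $\mathbf{v}^d\in\mathbf{\mathcal{D}}_k(\mathbf{\mathcal{E}}_h)$: for $\mathbf{v}^c$ all interior–edge jumps vanish, so its consistency contribution is controlled by the \emph{continuous} multiplicative trace inequality with an $h$–independent constant, while $\mathbf{v}^d$ is handled by the polynomial inverse estimate, and the cross terms by the same Young splitting. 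Everything else is bookkeeping — tracking that each constant is $h$–independent through shape regularity and the finite edge overlap, and recording the explicit threshold $\alpha\ge\alpha_0(k,C_{\mathrm{tr}},\lambda,\mu,\rho_1,\text{shape reg.})$ below which coercivity may degrade, which is exactly the role of the hypotheses that $\alpha$ be independent of the local mesh sizes and that $\beta\ge1$.
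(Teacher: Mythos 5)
Your proposal is correct, and its skeleton --- Cauchy--Schwarz on the consistency terms, an inverse trace inequality $\|\sigma(\mathbf{v})\cdot\mathbf{n}\|^2_{0,\partial E}\le C h_E^{-1}\|\sigma(\mathbf{v})\|^2_{F(E)}$, a Young splitting with parameter $\delta$, and absorption into the volume and penalty terms for $\alpha$ large using $|e|^{-\beta}\ge h^{-1}$ when $\beta\ge1$ and $h\le1$ --- is exactly the paper's. Where you deviate, you are generally more careful, and the differences are worth recording. First, you discard the boundary terms over $\Gamma_h$ and $\Gamma_h^R$ on the grounds that the arguments are purely spatial; the paper instead retains them and disposes of them by invoking a bound of the form $\|\mathbf{v}\|^2_{0,e}\le C\|\mathbf{v}_t\|^2_{0,e}$ imported from its appendix, a step that only makes sense for time-dependent arguments and whose sign bookkeeping is muddled --- your reading is the cleaner one for the lemma as stated, though you should note that the coercivity is later applied to the time-dependent errors $(\mathbf{e}^u,e^\varphi)$, where these skew coupling terms reappear and must be accounted for. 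Second, the paper finishes via the inequality $\|\varepsilon(\mathbf{v})\|_{F(E)}\le\|\nabla\cdot\mathbf{v}\|_{0,E}$, which is false in general (take $\mathbf{v}=(y,x)$: divergence-free with nonzero strain); your use of the reverse, correct bound $\|\nabla\cdot\mathbf{v}\|_{0,E}\le\sqrt{d}\,\|\varepsilon(\mathbf{v})\|_{F(E)}$ is what actually lets the strain term dominate, so your route repairs a flaw in the paper's. Third, you flag the genuine issue that the polynomial inverse/trace estimate does not apply to the $\mathbf{H}^1(\Omega)$ component of $\mathbf{V}_1(h)$, which the paper silently ignores; be aware, however, that your proposed remedy is not yet airtight, since for $\mathbf{v}^c\in\mathbf{H}^1(\Omega)$ only, $\sigma(\mathbf{v}^c)$ is merely $L^2$ and has no edge trace, so the cross term $\int_e\{\sigma(\mathbf{v}^c)\cdot\mathbf{n}\}[\mathbf{v}^d]\,dS$ is not even defined. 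A clean fix is either to state the coercivity only for piecewise-polynomial (or elementwise $H^{1+m}$, $m>1/2$) arguments --- which is all the error analysis requires --- or to define $a_h$ on $\mathbf{V}_1(h)\times V_2(h)$ through lifting operators.
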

    {\bf Proof:} By Cauchy-Schwarz inequality,
    \begin{eqnarray*}
      \sum\limits_{e\in \Gamma_I^1}\int\limits_e\{\sigma(\mathbf{v})\cdot\mathbf{n}\}[\mathbf{v}]\,dS &\leq&\sum\limits_{e\in \Gamma_I^1}\|\{\sigma(\mathbf{v})\cdot\mathbf{n}\}\|_{0,e}\|[\mathbf{v}]\|_{0,e} \\
    &\leq&\sum\limits_{e\in \Gamma_I^1}\|\{\sigma(\mathbf{v})\cdot\mathbf{n}\}\|_{0,e}\left(\frac{1}{|e|^{\beta}}\right)^{\frac{1}{2}-\frac{1}{2}}\|[\mathbf{v}]\|_{0,e}.
    \end{eqnarray*}
    Consider the average of the fluxes for an interior edge $e$ shared by $E_1^e$ and $E_2^e$ and apply the trace theorem, we have:
    \begin{eqnarray*}
    \|\{\sigma(\mathbf{v})\cdot\mathbf{n}\}\|_{0,e}&\leq&\frac{1}{2}\|(\sigma(\mathbf{v})\cdot\mathbf{n})|_{E_1^e}\|_{0,e}+ \frac{1}{2}\|(\sigma(\mathbf{v})\cdot\mathbf{n})|_{E_2^e}\|_{0,e} \\
    &\leq&\frac{C_+}{2}h_{E_1^e}^{-\frac{1}{2}}\|\sigma(\mathbf{v})\|_{F(E_1^e)} +\frac{C_+}{2}h_{E_1^e}^{-\frac{1}{2}}\|\sigma(\mathbf{v})\|_{F(E_2^e)}.
    \end{eqnarray*}
    Assume $\beta\geq1$ and $h\leq1$, we have:
    \begin{eqnarray*}
      \int_e\{\sigma(\mathbf{v})\cdot\mathbf{n}\}[\mathbf{v}]\,dS&\leq&\frac{C_+}{2}|e|^{\frac{\beta}{2}} \bigg(h_{E_1^e}^{-\frac{1}{2}}\|\sigma(\mathbf{v})\|_{F(E_1^e)}+h_{E_1^e}^{-\frac{1}{2}} \|\sigma(\mathbf{v})\|_{F(E_2^e)}\bigg)\left(\frac{1}{|e|^{\beta}}\right)^{\frac{1}{2}} \|[\mathbf{v}]\|_{0,e}\\
    &\leq&\frac{C_+}{2}\bigg(h_{E_1^e}^{\frac{\beta}{2}-\frac{1}{2}} +h_{E_1^e}^{\frac{\beta}{2}-\frac{1}{2}}\bigg)\bigg(\|\sigma(\mathbf{v})\|^2_{F(E_1^e)}+\|\sigma(\mathbf{v})\|^2_{F(E_2^e)}\bigg)^{\frac{1}{2}} \left(\frac{1}{|e|^{\beta}}\right)^{\frac{1}{2}} \|[\mathbf{v}]\|_{0,e}\\
    &\leq&C_+\bigg(\|\sigma(\mathbf{v})\|^2_{F(E_1^e)}+\|\sigma(\mathbf{v})\|^2_{F(E_2^e)}\bigg)^{\frac{1}{2}} \left(\frac{1}{|e|^{\beta_0}}\right)^{\frac{1}{2}} \|[\mathbf{v}]\|_{0,e}.
    \end{eqnarray*}
    We denote $n_0$ the maximum number of neighbors an element can have.
     \begin{eqnarray*}
       \sum\limits_{e\in \Gamma_I^1}\int_e\{\sigma(\mathbf{v})\cdot\mathbf{n}\}[\mathbf{v}]\,dS
    &\leq&C_+\bigg(\sum\limits_{e\in \Gamma_I^1}\big(\|\sigma(\mathbf{v})\|^2_{F(E_1^e)}+\|\sigma(\mathbf{v})\|^2_{F(E_2^e)}\big)\bigg)^{\frac{1}{2}}\bigg(\sum\limits_{e\in \Gamma_I^1}\frac{1}{|e|^{\beta}} \|[\mathbf{v}]\|^2_{0,e}\bigg)^{\frac{1}{2}}\\
    &\leq&C_+\sqrt{n_0}\bigg(\sum\limits_{E\in \mathcal{E}_h}\|\sigma(\mathbf{v})\|^2_{F(E)}\bigg)^{\frac{1}{2}}\bigg(\sum\limits_{e\in \Gamma_I^1}\frac{1}{|e|^{\beta}} \|[\mathbf{v}]\|^2_{0,e}\bigg)^{\frac{1}{2}}.
    \end{eqnarray*}
    Using Young's inequality, we have for $\delta>0$
    \begin{eqnarray*}
      \sum\limits_{e\in \Gamma_I^1}\int_e\{\sigma(\mathbf{v})\cdot\mathbf{n}\}[\mathbf{v}]\,dS
    &\leq&\frac{\delta}{2}\sum\limits_{E\in \mathcal{E}_h}\|\sigma(\mathbf{v})\|^2_{F(E)}+\frac{C_+^2n_0}{2\delta}\sum\limits_{e\in \Gamma_I^1}\frac{1}{|e|^{\beta}} \|[\mathbf{v}]\|^2_{0,e}.
    \end{eqnarray*}
    Following a similar steps, we get:
    \begin{eqnarray*}
    \sum\limits_{e\in \Gamma_I^2}\int_e\left\{\frac{\partial\phi}{\partial\mathbf{n}}\right\}[\phi]dS&\leq&\frac{\delta}{2}\sum\limits_{K\in \mathcal{K}_h}\|\nabla \phi\|^2_{0,K}+\frac{C_+^2n_0}{2\delta}\sum\limits_{e\in \Gamma_I^2}\frac{1}{|e|^{\beta}} \|[\phi]\|^2_{0,e}.
    \end{eqnarray*}
    Thus we obtain a lower bound for $a_h(\mathbf{v},\phi;\mathbf{v},\phi)$:
    \begin{eqnarray*}
      a_h(\mathbf{v},\phi;\mathbf{v},\phi)&\geq&\sum\limits_{E\in\mathbf{\mathcal{E}}_h}\left(\frac{\lambda}{\rho_1}\|\nabla\cdot \mathbf{v}\|^2_{0,E}+\frac{2\mu}{\rho_1}\|\varepsilon(\mathbf{v})\|^2_{F(E)}-\delta\|\sigma(\mathbf{v})\|^2_{F(E)}\right)\\
    &&+\sum\limits_{e\in \Gamma_I^1}\frac{\alpha\delta-C_+^2n_0}{\delta|e|^{\beta}} \|[\mathbf{v}]\|^2_{0,e}+\sum\limits_{K\in\mathcal{K}_h}(1-\delta)\|\nabla\phi\|^2_{0,K}+\sum\limits_{e\in \Gamma_I^2}\frac{\alpha\delta-C_+^2n_0}{\delta|e|^{\beta}} \|[\phi]\|^2_{0,e}\\
    &&-C\sum\limits_{e\in \Gamma_h}\left(\|\phi_t\|^2_{0,e}+\|\mathbf{v}\|^2_{0,e}-\|\phi\|^2_{0,e}-\|\mathbf{v}_t\|^2_{0,e}\right)-C\sum\limits_{e\in \Gamma_h^R}\left(\|\phi_t\|^2_{0,e}+\|\phi\|^2_{0,e}\right).
    \end{eqnarray*}
  For any $0<t\leq T$, using the Young's inequality, similar as~\eqref{eq:stable12} we have
       \begin{equation*}
         \|\mathbf{v}\|_{0,e}^2\leq C\|\mathbf{v}_{t}\|^2_{0,e}\quad \|\phi\|_{0,e}^2\leq C\|\phi_{t}\|^2_{0,e}.
       \end{equation*}
       Then, we have
    \begin{eqnarray*}
      a_h(\mathbf{v},\phi;\mathbf{v},\phi)&\geq&\sum\limits_{E\in\mathbf{\mathcal{E}}_h}\left(\frac{\lambda}{\rho_1}\|\nabla\cdot \mathbf{v}\|^2_{0,E}+\frac{2\mu}{\rho_1}\|\varepsilon(\mathbf{v})\|^2_{F(E)}-\delta\|\sigma(\mathbf{v})\|^2_{F(E)}\right)\\
    &&+\sum\limits_{e\in \Gamma_I^1}\frac{\alpha\delta-C_+^2n_0}{\delta|e|^{\beta}} \|[\mathbf{v}]\|^2_{0,e}+\sum\limits_{K\in\mathcal{K}_h}(1-\delta)\|\nabla\phi\|^2_{0,K}+\sum\limits_{e\in \Gamma_I^2}\frac{\alpha\delta-C_+^2n_0}{\delta|e|^{\beta}} \|[\phi]\|^2_{0,e}\\
    &&+C\sum\limits_{e\in \Gamma_h}\left(\|\phi\|^2_{0,e}+\|\mathbf{v}\|^2_{0,e}\right).
    \end{eqnarray*}
    Since $\sigma(\mathbf{v})=(\lambda\,\text{div}(\mathbf{v}))\mathrm{I}+2\mu\varepsilon(\mathbf{v})$ and triangle inequality,
    $$\|\sigma(\mathbf{v})\|^2_{F(E)}\leq\lambda\|\nabla\cdot \mathbf{v}\|^2_{0,E}+2\mu\|\varepsilon(\mathbf{v})\|^2_{F(E)}.$$
    Therefore, we have
    \begin{eqnarray*}
    a_h(\mathbf{v},\phi;\mathbf{v},\phi)&\geq&\sum\limits_{E\in\mathcal{E}_h}\left(\left(\frac{\lambda}{\rho_1}-\delta\right)\|\nabla\cdot \mathbf{v}\|^2_{0,E}+\left(\frac{2\mu}{\rho_1}-\delta\right)\|\varepsilon(\mathbf{v})\|^2_{F(E)}\right)\\
    &&+\sum\limits_{e\in \Gamma_I^1}\frac{\alpha\delta-C_+^2n_0}{\delta|e|^{\beta}} \|[\mathbf{v}]\|^2_{0,e}+\sum\limits_{K\in\mathcal{K}_h}(1-\delta)\|\nabla\phi\|^2_{0,K}+\sum\limits_{e\in \Gamma_I^2}\frac{\alpha\delta-C_+^2n_0}{\delta|e|^{\beta}} \|[\phi]\|^2_{0,e}\\
    &&+C\sum\limits_{e\in \Gamma_h}\left(\|\phi\|^2_{0,e}+\|\mathbf{v}\|^2_{0,e}\right).
    \end{eqnarray*}
    From the definition of the strain tensor, $\|\varepsilon(\mathbf{v})\|_{F(E)}\leq\|\nabla\cdot \mathbf{v}\|_{0,E}$, we have
    \begin{eqnarray*}
    a_h(\mathbf{v},\phi;\mathbf{v},\phi)&\geq&\sum\limits_{E\in\mathcal{E}_h}\left(\frac{\lambda}{\rho_1}+\frac{2\mu}{\rho_1}-2\delta\right)\|\varepsilon(\mathbf{v})\|^2_{F(E)}+\sum\limits_{e\in \Gamma_I^1\cup\Gamma_h}\frac{\alpha\delta-C_+^2n_0+C}{\delta|e|^{\beta}} \|[\mathbf{v}]\|^2_{0,e}\\
    &&+\sum\limits_{K\in\mathcal{K}_h}(1-\delta)\|\nabla\phi\|^2_{0,K}+\sum\limits_{e\in \Gamma_I^2\cup\Gamma_h}\frac{\alpha\delta-C_+^2n_0+C}{\delta|e|^{\beta}} \|[\phi]\|^2_{0,e}.
    \end{eqnarray*}
    Assume $\alpha$ is large enough (e.g. $\alpha\geq \frac{C_+^2n_0+C}{\delta}$), take $C_{\text{coer}}=\lambda+2\mu-2\delta$, we obtain the coercivity result for $a_h(\mathbf{v},\phi;\mathbf{v},\phi)$.
    \qed

    \begin{lemma}[Continuity]\label{lemma4.2}{There exists a positive constant $C_{\text{cont}}$ independent of $h$ such that
    \begin{eqnarray}\label{eq:C0}
    \nonumber
    |a_h(\mathbf{u},\varphi;\mathbf{v},\phi)|&\leq& C_{\text{cont}}\left(\sum\limits_{E\in\mathbf{\mathcal{E}}_h}\|\varepsilon(\mathbf{u})\|_{F(E)}\|\varepsilon(\mathbf{v})\|_{F(E)}+\sum\limits_{e\in\Gamma_h\cup\Gamma^1_I}\|[\mathbf{u}]\|_{0,e}\|[\mathbf{v}]\|_{0,e}\right.\\ &&\left.+\sum\limits_{K\in\mathcal{K}_h}\|\nabla\varphi\|_{0,K}\|\nabla\phi\|_{0,K}+\sum\limits_{e\in\Gamma_h\cup\Gamma^2_I}\|[\varphi]\|_{0,e}\|[\phi]\|_{0,e}\right).
    \end{eqnarray}
    }
    \end{lemma}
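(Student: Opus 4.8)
The plan is to bound each of the six groups of terms in the definition \eqref{eq:IPDG2} of $a_h(\mathbf{u},\varphi;\mathbf{v},\phi)$ separately, following the same pattern used in the coercivity proof but with two different arguments in each inner product. First I would handle the volume terms: by Cauchy--Schwarz on each element $E$,
\[
\frac{\lambda}{\rho_1}(\nabla\cdot\mathbf{u},\nabla\cdot\mathbf{v})_{0,E}+\frac{2\mu}{\rho_1}(\varepsilon(\mathbf{u}):\varepsilon(\mathbf{v}))_{F(E)}\le C\,\|\varepsilon(\mathbf{u})\|_{F(E)}\|\varepsilon(\mathbf{v})\|_{F(E)},
\]
where I use $\|\nabla\cdot\mathbf{v}\|_{0,E}\le\sqrt{2}\,\|\varepsilon(\mathbf{v})\|_{F(E)}$ (a pointwise algebraic inequality, $|\mathrm{tr}\,A|\le\sqrt{2}\,|A|_F$ in 2D), exactly as in the coercivity proof; similarly $\sum_K(\nabla\varphi,\nabla\phi)_{0,K}\le\sum_K\|\nabla\varphi\|_{0,K}\|\nabla\phi\|_{0,K}$. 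Summing and using Cauchy--Schwarz for sums gives the desired form for these contributions.

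Next I would treat the four consistency (flux) terms. For a typical one, $\sum_{e\in\Gamma_I^1}\int_e\{\sigma(\mathbf{u})\cdot\mathbf{n}\}[\mathbf{v}]\,dS$, I apply Cauchy--Schwarz on $e$, then the trace/inverse inequality $\|\{\sigma(\mathbf{u})\cdot\mathbf{n}\}\|_{0,e}\le C_+\,h_{E}^{-1/2}(\|\sigma(\mathbf{u})\|_{F(E_1^e)}^2+\|\sigma(\mathbf{u})\|_{F(E_2^e)}^2)^{1/2}$ just as in the coercivity proof, use $|e|\le h_{E}$ together with $\beta\ge1$ (and $h\le 1$) to absorb the mesh factor against $|e|^{-\beta/2}$, then $\|\sigma(\mathbf{u})\|_{F(E)}\le C\|\varepsilon(\mathbf{u})\|_{F(E)}$ (again from $\sigma=\lambda(\nabla\cdot\mathbf{u})\mathbf{I}+2\mu\varepsilon(\mathbf{u})$ and the trace bound), and finally Cauchy--Schwarz over edges with the finite-overlap constant $\sqrt{n_0}$. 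This yields a bound by $C\big(\sum_E\|\varepsilon(\mathbf{u})\|_{F(E)}^2\big)^{1/2}\big(\sum_e\frac1{|e|^\beta}\|[\mathbf{v}]\|_{0,e}^2\big)^{1/2}$; then a further Cauchy--Schwarz (or Young, if one prefers the bilinear-sum form) reorganizes this into $\sum_{E}\|\varepsilon(\mathbf{u})\|_{F(E)}\|\varepsilon(\mathbf{v})\|_{F(E)}+\sum_{e}\|[\mathbf{u}]\|_{0,e}\|[\mathbf{v}]\|_{0,e}$ up to the constant. The symmetric term with $\{\sigma(\mathbf{v})\cdot\mathbf{n}\}[\mathbf{u}]$ and the two acoustic flux terms with $\{\partial\varphi/\partial\mathbf{n}\}[\phi]$ and $\{\partial\phi/\partial\mathbf{n}\}[\varphi]$ are handled identically, producing the $\mathbf{u}$--$\mathbf{v}$ jump terms and the $\varphi$--$\phi$ gradient and jump terms respectively. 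The interior-penalty terms $\sum_e\frac{\alpha}{|e|^\beta}[\varphi][\phi]$ and $\sum_e\frac{\alpha}{|e|^\beta}[\mathbf{u}][\mathbf{v}]$ are immediate: Cauchy--Schwarz on each edge, $|e|^{-\beta}\le h^{-1}$ when $\beta\ge1,\,h\le1$, absorbed into $C_{\text{cont}}$.

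The only genuinely new terms compared with coercivity are the boundary coupling terms on $\Gamma_h$ and $\Gamma_h^R$, namely $\sum_{e\in\Gamma_h}\int_e\varphi_t\mathbf{n}\mathbf{v}\,dS-\sum_{e\in\Gamma_h}\int_e\mathbf{u}_t\cdot\mathbf{n}\phi\,dS+\sum_{e\in\Gamma_h^R}\int_e\varphi_t\phi\,dS$, which involve time derivatives of the \emph{first} arguments, not the arguments themselves, so they are not literally controlled by the right-hand side of \eqref{eq:C0} as written. The main obstacle is therefore purely one of bookkeeping: one has to read ``$a_h$ is continuous'' in the same sense the coercivity lemma is stated, i.e.\ on the semidiscrete pair together with its time derivative, or to regard $\varphi_t,\mathbf{u}_t$ on $\Gamma$ as part of the data packaged into the norm, exactly as was done in the coercivity proof where the step $\|\mathbf{v}\|_{0,e}^2\le C\|\mathbf{v}_t\|_{0,e}^2$ was invoked. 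I would handle these three terms by Cauchy--Schwarz on each boundary edge followed by the trace inequality $\|\mathbf{v}\|_{0,e}\le C h_E^{-1/2}\|\mathbf{v}\|_{0,E}\le C\|\mathbf{v}\|_h$ (and analogously for $\phi$), bounding them by $C\big(\sum_{e\in\Gamma_h}\|\varphi_t\|_{0,e}+\sum_{e\in\Gamma_h}\|\mathbf{u}_t\|_{0,e}+\sum_{e\in\Gamma_h^R}\|\varphi_t\|_{0,e}\big)$ times $(\|\mathbf{v}\|_h+\|\phi\|_h)$, which is absorbed in the continuity constant under the same reading of the statement as in Lemma~\ref{lemma4.1}. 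Collecting all six groups and taking $C_{\text{cont}}$ to be the maximum of the constants produced gives \eqref{eq:C0}.
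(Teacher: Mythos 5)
Your proposal is correct in outline and follows the same basic strategy as the paper's proof: bound each group of terms in $a_h$ by Cauchy--Schwarz and absorb the mesh-dependent factors into the constant $C_{\text{cont}}$. The differences are in the details, and in each case your version is the more careful one. The paper's proof writes out only the volume terms \eqref{eq:C1}--\eqref{eq:C2} and the three boundary coupling terms \eqref{eq:C5}--\eqref{eq:C4}; the interior flux terms $\sum_{e\in\Gamma_I^1}\int_e\{\sigma(\cdot)\cdot\mathbf{n}\}[\cdot]\,dS$ and the penalty terms are silently omitted, whereas you treat them explicitly by re-running the trace/inverse-inequality argument from the coercivity proof, which is exactly what is needed. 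For the volume terms you invoke $\|\nabla\cdot\mathbf{v}\|_{0,E}\le\sqrt2\,\|\varepsilon(\mathbf{v})\|_{F(E)}$, which is the correct direction of that algebraic inequality (the coercivity proof in the paper asserts the reverse, $\|\varepsilon(\mathbf{v})\|_{F(E)}\le\|\nabla\cdot\mathbf{v}\|_{0,E}$, which fails for divergence-free shear fields). Most importantly, you correctly flag that the coupling terms $\int_e\varphi_t\mathbf{n}\cdot\mathbf{v}\,dS$, $\int_e\mathbf{u}_t\cdot\mathbf{n}\,\phi\,dS$ and $\int_e\varphi_t\phi\,dS$ involve time derivatives of the first arguments and cross-pairings ($\varphi$ with $\mathbf{v}$, $\mathbf{u}$ with $\phi$) that are not literally of the form appearing on the right-hand side of \eqref{eq:C0}; the paper disposes of them via the chain $\|\varphi_t\|_{-\frac12,e}\le C\|\varphi\|_{\frac12,e}\le C\|\varphi\|_{0,e}$, whose first step trades a time derivative for nothing and whose second step runs the Sobolev scale in the wrong direction, and then appeals to the arithmetic--geometric mean inequality, which still produces squared single-argument traces rather than the bilinear products $\|[\mathbf{u}]\|_{0,e}\|[\mathbf{v}]\|_{0,e}$ and $\|[\varphi]\|_{0,e}\|[\phi]\|_{0,e}$ stated in the lemma. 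Your reading --- that the lemma must be interpreted under the same convention as the coercivity proof, where $\|\mathbf{v}\|_{0,e}^2\le C\|\mathbf{v}_t\|_{0,e}^2$ is invoked and the time derivatives are packaged into the norm --- is the honest way to make the statement usable, with the caveat that the constant then inherits a dependence on $T$ through that step. In short: same decomposition and same tools, but your proof covers terms the paper's does not and is explicit about the one place where the stated bound does not follow verbatim.
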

   {\bf Proof:} Using Cauchy-Schwartz inequality, we obtain:
   \begin{eqnarray}\label{eq:C1}
     \left|\sum\limits_{E\in\mathbf{\mathcal{E}}_h} \left(\frac{\lambda}{\rho_1}(\nabla\cdot\mathbf{u}, \nabla\cdot \mathbf{v})_{0,E}+\frac{2\mu}{\rho_1}\left(\varepsilon(\mathbf{u}):\varepsilon(\mathbf{v})\right)_{F(E)}\right)\right| \leq \sum\limits_{E\in\mathbf{\mathcal{E}}_h}\frac{\lambda+8\mu}{\rho_1}\|\nabla\cdot\mathbf{u}\|_{0,E}\|\nabla\cdot\mathbf{v}\|_{0,E}.
   \end{eqnarray}
   Similarly, we have
   \begin{eqnarray}\label{eq:C2}
     \left|\sum\limits_{K\in\mathcal{K}_h}(\nabla \varphi,\nabla\phi)_{0,K}\right|\leq \sum\limits_{K\in\mathcal{K}_h}C\|\nabla\varphi\|_{0,K}\|\nabla\phi\|_{0,K}.
   \end{eqnarray}
   According to Cauchy-Schwartz inequality and the trace theorem:
   \begin{eqnarray}\label{eq:C5}
     \left|\sum\limits_{e\in\Gamma_h} \int_e\varphi_t\mathbf{n}\mathbf{v}\,dS\right| \leq\sum\limits_{e\in\Gamma_h}C\|\varphi_t\|_{-\frac{1}{2},e}\|\mathbf{v}\|_{\frac{1}{2},e} \leq\sum\limits_{e\in\Gamma_h}C\|\varphi\|_{\frac{1}{2},e}\|\mathbf{v}\|_{\frac{1}{2},e}\leq\sum\limits_{e\in\Gamma_h}C\|\varphi\|_{0,e}\|\mathbf{v}\|_{0,e},
   \end{eqnarray}
   \begin{eqnarray}\label{eq:C3}
     \left|\sum\limits_{e\in\Gamma_h}\int_e\mathbf{u}_t\cdot\mathbf{n}\phi\,dS\right|\leq \sum\limits_{e\in\Gamma_h}C\|\mathbf{u}_t\|_{-\frac{1}{2},e}\|\phi\|_{\frac{1}{2},e}\leq\sum\limits_{e\in\Gamma_h}C\|\mathbf{u}\|_{\frac{1}{2},e} \|\phi\|_{\frac{1}{2},e}\leq\sum\limits_{e\in\Gamma_h}C\|\mathbf{u}\|_{0,e}\|\phi\|_{0,e},
   \end{eqnarray}
   and
   \begin{eqnarray}\label{eq:C4}
     \left|\sum\limits_{e\in\Gamma_h^R} \int\limits_{e}\varphi_t\phi dS\right|\leq \sum\limits_{e\in\Gamma_h^R} C \|\varphi_t\|_{-\frac{1}{2},e}\|\phi\|_{\frac{1}{2},e}\leq \sum\limits_{e\in\Gamma_h^R} C \|\varphi\|_{\frac{1}{2},e}\|\phi\|_{\frac{1}{2},e}\leq \sum\limits_{e\in\Gamma_h^R} C \|\varphi\|_{0,e}\|\phi\|_{0,e},
   \end{eqnarray}
   where all constants $C>0$ are independent of $\mathbf{u},\varphi,\mathbf{v},$ and $\phi$.  Set $C_{\text{cont}}=\max\left\{C,\frac{\lambda+8\mu}{\rho_1}\right\}$, then combining~\eqref{eq:C1}-\eqref{eq:C5} together and applying the arithmetic geometric average inequality, we obtain~\eqref{eq:C0}. \qed

   \subsection{\textit{A priori} error estimate}
   \par{}In this section, we will give \textit{a priori} error estimates in the energy norm for the IPDG method.  First of all, we will derive an error equation.  For $(\mathbf{u},\varphi)\in \mathbf{H}^{1+m}\times H^{1+m}$ with $m>1/2$ and $(\mathbf{v},\phi)\in \mathbf{V}_1(h)\times V_2(h)$, we define
   \begin{equation}\label{error1}
     r_h(\mathbf{u},\varphi;\mathbf{v},\phi)=\sum\limits_{e\in\Gamma_h\cup\Gamma_{I}^1}\int_e[\mathbf{v}]\cdot\{\sigma(\mathbf{u}) -\Pi_h(\sigma(\mathbf{u}))\}\,dS+\sum\limits_{e\in\Gamma_h\cup\Gamma_{I}^2}\int_e[\phi]\cdot\{\nabla\varphi-\Pi_h(\nabla\varphi)\}\,dS.
   \end{equation}
   Here $\Pi_h$ denotes the $L^2$-projection onto the associated finite element space ($\mathbf{V}_1(h)$ or $V_2(h)$).  The assumption $(\mathbf{u},\varphi)\in \mathbf{H}^{1+m}\times H^{1+m}$ ensures that $r_h(\mathbf{u},\varphi;\mathbf{v},\phi)$ is well defined. From the definition \eqref{error1} it is obvious that $r_h(\mathbf{u},\varphi;\mathbf{v},\phi)=0$ when $(\mathbf{v},\phi)\in \mathbf{H}^{1+m}\times H^{1+m}$.
   \begin{lemma}\label{lemma3.2}
   {Assume the analytical solution $(\mathbf{u},\varphi)$ of~\eqref{eq:rdfsi1}-\eqref{eq:rdfsi6} satisfy
   \[
     \mathbf{u}\in L^{\infty}\left(J;\mathbf{H}^{1+m}(\Omega)\right),\,\,\mathbf{u}_{tt}\in L^{1}\left(J;\mathbf{H}^m(\Omega)\right),
   \]
   and
   \[
     \varphi\in L^{\infty}\left(J;H^{1+m}(\Omega_R)\right),\,\,\varphi_{tt}\in L^{1}\left(J;H^m(\Omega_R)\right).
   \]
   Let $(\mathbf{u}^h$, $\varphi^h)$ be the semidiscrete DG approximation solution. Then, the error $(\mathbf{e}^u,e^\varphi)=(\mathbf{u}-\mathbf{u}^h,\varphi-\varphi^h)$ satisfies:
   \begin{eqnarray}\label{error2}
     \frac{\rho_2}{\rho_1}\left\langle\mathbf{e}^u_{tt},\mathbf{v}\right\rangle+\frac{1}{c^2}\left\langle e^\varphi_{tt},\phi\right\rangle+a_h(\mathbf{e}^u,e^\varphi;\mathbf{v},\phi)=r_h(\mathbf{e}^u,e^\varphi;\mathbf{v},\phi),
   \end{eqnarray}
   for $(\mathbf{v},\phi)\in \mathbf{\mathcal{D}}_k(\mathbf{\mathcal{E}}_h)\times\mathcal{D}_k(\mathcal{K}_h),$  and $r_h$ is given in~\eqref{error1}.
   }
   \end{lemma}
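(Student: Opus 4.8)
The approach is the standard consistency argument for symmetric interior-penalty DG: I will show that the exact solution satisfies the discrete variational identity \eqref{eq:IPDGVF1} up to the projection residual $r_h$, and then subtract the semidiscrete scheme. One point to fix first: since functions in $\mathbf{V}_1(h)$, $V_2(h)$ only have $L^2$ stresses and normal derivatives on element boundaries, the interface-flux terms of $a_h$ are to be read with the $L^2$-projected fluxes $\{\Pi_h(\sigma(\cdot))\cdot\mathbf{n}\}$ and $\{\Pi_h(\nabla(\cdot))\cdot\mathbf{n}\}$; these agree with the plain fluxes on $\mathbf{\mathcal{D}}_k(\mathbf{\mathcal{E}}_h)\times\mathcal{D}_k(\mathcal{K}_h)$, so \eqref{eq:IPDGVF1} is unaffected, but on the (smoother) exact solution the gap between the projected flux and the genuine flux produced by integration by parts is exactly $r_h$. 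To begin, fix $(\mathbf{v},\phi)\in\mathbf{\mathcal{D}}_k(\mathbf{\mathcal{E}}_h)\times\mathcal{D}_k(\mathcal{K}_h)$, multiply \eqref{eq:rdfsi1} by $\mathbf{v}$ and \eqref{eq:rdfsi2} by $\phi$, integrate over each $E\in\mathbf{\mathcal{E}}_h$ and each $K\in\mathcal{K}_h$, integrate by parts (using $\sigma(\mathbf{u}):\varepsilon(\mathbf{v})=\lambda(\nabla\cdot\mathbf{u})(\nabla\cdot\mathbf{v})+2\mu\,\varepsilon(\mathbf{u}):\varepsilon(\mathbf{v})$), scale the elastic part by $\rho_1^{-1}$ and sum; this recovers the mass and volume terms of $a_h$ and leaves the element-boundary sums $\sum_E\int_{\partial E}(\sigma(\mathbf{u})\mathbf{n}_E)\cdot\mathbf{v}$ and $\sum_K\int_{\partial K}\partial_{\mathbf{n}_K}\varphi\,\phi$.

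Next, split each element-boundary sum into its interior-edge part and its parts on $\Gamma_h$ and $\Gamma_h^R$. By the regularity hypotheses $\mathbf{u}\in\mathbf{H}^{1+m}(\Omega)$, $\varphi\in H^{1+m}(\Omega_R)$ with $m>1/2$, the tensors $\sigma(\mathbf{u})$ and $\nabla\varphi$ are single-valued across interior edges and $[\mathbf{u}]=[\varphi]=0$ there, so the interior part collapses to $\sum_{e\in\Gamma^1_I}\int_e\{\sigma(\mathbf{u})\cdot\mathbf{n}\}[\mathbf{v}]+\sum_{e\in\Gamma^2_I}\int_e\{\partial_{\mathbf{n}}\varphi\}[\phi]$, and the same vanishing of the jumps permits adjoining for free the symmetrisation terms $-\int_e\{\sigma(\mathbf{v})\cdot\mathbf{n}\}[\mathbf{u}]$, $-\int_e\{\partial_{\mathbf{n}}\phi\}[\varphi]$ and the interior-penalty terms of $a_h$. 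On $\Gamma_h$ I insert the transmission conditions \eqref{eq:rdfsi3}--\eqref{eq:rdfsi4}: the traction and normal-derivative integrals become the coupling terms $\sum_{e\in\Gamma_h}\int_e\varphi_t\mathbf{n}\mathbf{v}$ and $-\sum_{e\in\Gamma_h}\int_e\mathbf{u}_t\cdot\mathbf{n}\,\phi$ of $a_h$ plus the two terms of $L_h$, where one must remember that the outward normal of a fluid element along $\Gamma_h$ is $-\mathbf{n}$; on $\Gamma_h^R$ the absorbing condition \eqref{eq:rdfsi5} gives $\sum_{e\in\Gamma_h^R}\int_e\varphi_t\,\phi$. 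Replacing each exact interface flux by the $L^2$-projected one occurring in $a_h$ and moving the defect to the right-hand side produces precisely $r_h(\mathbf{u},\varphi;\mathbf{v},\phi)$, so that
\[
\frac{\rho_2}{\rho_1}\langle\mathbf{u}_{tt},\mathbf{v}\rangle+\frac{1}{c^2}\langle\varphi_{tt},\phi\rangle+a_h(\mathbf{u},\varphi;\mathbf{v},\phi)=L_h(\mathbf{v},\phi)+r_h(\mathbf{u},\varphi;\mathbf{v},\phi).
\]

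Subtracting \eqref{eq:IPDGVF1} from this identity then gives the error equation. The mass, volume, penalty, symmetrisation and $\Gamma$/$\Gamma_R$-coupling contributions are linear in the first pair of arguments and combine into $a_h(\mathbf{e}^u,e^\varphi;\mathbf{v},\phi)$; and because $\sigma(\mathbf{u}^h)$ and $\nabla\varphi^h$ are element-wise polynomials fixed by $\Pi_h$, one has $\sigma(\mathbf{e}^u)-\Pi_h\sigma(\mathbf{e}^u)=\sigma(\mathbf{u})-\Pi_h\sigma(\mathbf{u})$ and $\nabla e^\varphi-\Pi_h\nabla e^\varphi=\nabla\varphi-\Pi_h\nabla\varphi$, hence $r_h(\mathbf{u}^h,\varphi^h;\mathbf{v},\phi)=0$ and $r_h(\mathbf{u},\varphi;\mathbf{v},\phi)=r_h(\mathbf{e}^u,e^\varphi;\mathbf{v},\phi)$. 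Together with the homogeneous initial data \eqref{eq:rdfsi6}, this is \eqref{error2}.

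I expect the only real difficulty to be the bookkeeping in the middle step: keeping track of the edge-normal orientations — in particular the sign flip of the fluid-element normal on $\Gamma_h$ — and matching every boundary integral generated by integration by parts against the precise coupling and absorbing terms of $a_h$ and $L_h$, all while being consistent about the fact that $a_h$ evaluated at the non-polynomial exact solution is read with $L^2$-projected interface fluxes, which is exactly what makes $r_h$, rather than $0$, appear on the right. These are the manipulations of the scalar wave case in \cite{MJGASDS2006}.
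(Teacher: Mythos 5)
Your proposal is correct and follows essentially the same route as the paper: both establish the consistency identity $\frac{\rho_2}{\rho_1}\langle\mathbf{u}_{tt},\mathbf{v}\rangle+\frac{1}{c^2}\langle\varphi_{tt},\phi\rangle+a_h(\mathbf{u},\varphi;\mathbf{v},\phi)=L_h(\mathbf{v},\phi)+r_h(\mathbf{u},\varphi;\mathbf{v},\phi)$ via elementwise integration by parts, the vanishing jumps of the exact solution, the transmission and absorbing conditions, and the projected-flux convention, and then subtract the semidiscrete scheme (the paper merely runs the integration by parts in the opposite direction, starting from $a_h$ evaluated at the exact solution). You are in fact slightly more explicit than the paper on two points it leaves implicit, namely that $a_h$ on the extended space is read with $L^2$-projected interface fluxes and that $r_h(\mathbf{u},\varphi;\mathbf{v},\phi)=r_h(\mathbf{e}^u,e^\varphi;\mathbf{v},\phi)$ because $\Pi_h$ fixes the piecewise-polynomial fluxes of $(\mathbf{u}^h,\varphi^h)$.
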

   {\bf Proof:} Let $(\mathbf{v},\phi)\in \mathbf{\mathcal{D}}_k(\mathbf{\mathcal{E}}_h)\times\mathcal{D}_k(\mathcal{K}_h)$. Since $\mathbf{u}_{tt}\in L^1\left(J;\mathbf{L}^2(\Omega)\right)$, $\varphi_{tt}\in L^1\left(J;L^2(\Omega_R)\right)$, we have $\langle\mathbf{u}_{tt},\mathbf{v}\rangle=(\mathbf{u}_{tt},\mathbf{v}),~\langle\varphi_{tt},\phi\rangle=(\varphi_{tt},\phi)$ almost everywhere in $[0,T]$.  Hence, using the discrete formulation in~\eqref{eq:IPDGVF1}, we obtain that
   \begin{align*}
     \frac{\rho_2}{\rho_1}\left(\mathbf{e}^u_{tt},\mathbf{v}\right)&+\frac{1}{c^2}\left(e^\varphi_{tt},\phi\right)+a_h(\mathbf{e}^u,e^\varphi;\mathbf{v},\phi)=\\
     &\quad\frac{\rho_2}{\rho_1}\left(\mathbf{u}_{tt},\mathbf{v}\right)+\frac{1}{c^2}\left(\varphi_{tt},\phi\right)+a_h(\mathbf{u},\varphi;\mathbf{v},\phi)-L_h(\mathbf{v},\phi)\quad\text{a.e.\ in~} [0,T].
   \end{align*}
   By the definition of $a_h$, the fact that $[\mathbf{u}]=0$, $[\varphi]=0$ on all edges and the properties of the $L^2$-projection $\Pi_h$, we obtain
   \begin{eqnarray*}
   a_h(\mathbf{u},\varphi;\mathbf{v},\phi)&=&\sum\limits_{E\in
   \mathbf{\mathcal{E}}_h}\left(\frac{\lambda}{\rho_1}(\nabla\cdot\mathbf{u},\nabla\cdot \mathbf{v})_{0,E}+\frac{2\mu}{\rho_1}\left(\varepsilon(\mathbf{u}):\varepsilon(\mathbf{v})\right)_{F(E)}\right)+\sum\limits_{K\in\mathcal{K}_h}(\nabla\varphi,\nabla\phi)_{0,K}\\
   &&-\sum\limits_{e\in \Gamma_I^1}\int_e\{\Pi_h(\sigma(\mathbf{u}))\cdot\mathbf{n}\}[\mathbf{v}]dS-\sum\limits_{e\in\Gamma_I^2}
   \int_e\{\Pi_h(\nabla\varphi)\mathbf{n}\}[\phi]\,dS+ \sum\limits_{e\in\Gamma_h}\int_e\varphi_t\mathbf{n}\mathbf{v}\,dS\\
   &&-\sum\limits_{e\in\Gamma_h}\int_e\mathbf{u}_t\cdot\mathbf{n}\phi\, dS+\sum\limits_{e\in\Gamma_h^R} \int_{e}\varphi_t\phi\,dS.
   \end{eqnarray*}

   Since $\mathbf{u}_{tt}\in L^1\left(J;\mathbf{L}^2(\Omega)\right)$, $\varphi_{tt}\in L^1\left(J;\mathbf{L}^2(\Omega_R)\right)$ and $\varphi^i\in L^1\left(J;L^2(\Omega_R)\right)$, we have that $\text{div}(\sigma(\mathbf{u}))\in L^2(\Omega)$ and $\Delta\varphi\in L^2(\Omega_R)$ almost everywhere in $[0,T]$, which means that $\text{div}(\sigma(\mathbf{u}))$ and $\Delta\varphi$ have continuous normal components across all interior faces.  Therefore, using integration by parts elementwisely and combining with the trace operators yield
   \begin{eqnarray*}
     a_h(\mathbf{u},\varphi;\mathbf{v},\phi)&=&-\sum\limits_{E\in\mathbf{\mathcal{E}}_h}\frac{1}{\rho_1}(\text{div}(\sigma(\mathbf{u})),\mathbf{v})_{0,E}+\sum\limits_{e\in \Gamma_I^1}\int_e\{\sigma(\mathbf{u})\cdot\mathbf{n}\}[\mathbf{v}]\,dS\\
   &&-\sum\limits_{e\in \Gamma_I^1}\int_e\{\Pi_h(\sigma(\mathbf{u}))\cdot\mathbf{n}\}[\mathbf{v}]dS
   -\sum\limits_{K\in\mathcal{K}_h}(\Delta\varphi,\phi)_{0,K}+\sum\limits_{e\in\Gamma_I^2}\int_e\{\nabla\varphi\mathbf{n}\}[\phi]\,dS\\
 &&-\sum\limits_{e\in\Gamma_I^2}\int_e\{\Pi_h(\nabla\varphi)\mathbf{n}\}[\phi]\,dS + L_h(\mathbf{v},\phi).
   \end{eqnarray*}
   From the definition of $r_h(\mathbf{u},\varphi;\mathbf{v},\phi)$ in~\eqref{error1}, we conclude that
   \begin{eqnarray*}
     &&\frac{\rho_2}{\rho_1}\left(\mathbf{u}_{tt},\mathbf{v}\right)+\frac{1}{c^2}\left(\varphi_{tt},\phi\right)+a_h(\mathbf{u},\varphi;\mathbf{v},\phi)\\
     &=&\left(\frac{\rho_2}{\rho_1}\mathbf{u}_{tt}-\frac{1}{\rho_1}\text{div}(\sigma(\mathbf{u})),\mathbf{v}\right)
     +\left(\frac{1}{c^2}\varphi_{tt}-\Delta\varphi,\phi\right) + L_h(\mathbf{v},\phi) + r_h(\mathbf{u},\varphi;\mathbf{v},\phi),\\
     &=&L_h(\mathbf{v},\phi) + r_h(\mathbf{u},\varphi;\mathbf{v},\phi),
   \end{eqnarray*}
   and obtain
   \begin{eqnarray*}
     &&\frac{\rho_2}{\rho_1}\left(\mathbf{e}^u_{tt},\mathbf{v}\right)+\frac{1}{c^2}\left( e^\varphi_{tt},\phi\right)+a_h(\mathbf{e}^u,e^\varphi;\mathbf{v},\phi) \\
     &=&L_h(\mathbf{v},\phi)-L_h(\mathbf{v},\phi)+r_h(\mathbf{u},\varphi;\mathbf{v},\phi) = r_h(\mathbf{u},\varphi;\mathbf{v},\phi),
   \end{eqnarray*}
   where we have used the differential equations~\eqref{eq:rdfsi1} and~\eqref{eq:rdfsi2}. \qed

   \paragraph{}Next, we recall some approximation properties, see~\cite{Ciarlet:1978} for more detail.
   \begin{lemma}\label{lemma3.3}
     {Let $E\in\mathbf{\mathcal{E}}_h$, $K\in\mathcal{K}_h$. Then the following hold:\\
       (i) For $(\mathbf{v},\phi)\in \mathbf{H}^{m}(E)\times H^{m}(K)$, $m\geq0$, we have
   \begin{equation*}
     \|\mathbf{v}\|_{0,E}+\|\phi\|_{0,K}\leq Ch_{E}^{\min\{m,k+1\}}\|\mathbf{v}\|_{m,E}+Ch_{K}^{\min\{m,k+1\}}\|\phi\|_{m,K}.
   \end{equation*}
   (ii) For $(\mathbf{v},\phi)\in \mathbf{H}^{1+m}(E)\times H^{1+m}(K)$, $m>\frac{1}{2}$, we have
   \begin{align*}
     \|\nabla\mathbf{v}-\nabla(\Pi_h\mathbf{v})\|_{0,E}+\|\nabla\phi-\nabla(\Pi_h\phi)\|_{0,K} & \leq Ch_{E}^{\min\{m,k\}}\|\mathbf{v}\|_{1+m,E}+Ch_{K}^{\min\{m,k\}}\|\phi\|_{1+m,K},\\
     \|\mathbf{v}-\Pi_h\mathbf{v}\|_{0,\partial E}+\|\phi-\Pi_h\phi\|_{0,\partial K} & \leq Ch_{E}^{\min\{m,k+1\}-\frac{1}{2}}\|\mathbf{v}\|_{1+m,E}+Ch_{K}^{\min\{m,k+1\}-\frac{1}{2}}\|\phi\|_{1+m,K},\\
     \|\nabla\mathbf{v}-\Pi_h(\nabla\mathbf{v})\|_{0,\partial E}+\|\nabla\phi-\Pi_h(\nabla\phi)\|_{0,\partial K} &\leq Ch_{E}^{\min\{m,k\}-\frac{1}{2}}\|\mathbf{v}\|_{1+m,E} + Ch_{K}^{\min\{m,k\}-\frac{1}{2}}\|\phi\|_{1+m,K},
   \end{align*}
   where $C$ is independent of local mesh size $h$.
   }
   \end{lemma}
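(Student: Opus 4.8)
The plan is the classical reference--element scaling argument; since $\Pi_h$ acts elementwise it suffices to prove each inequality on a single $E\in\mathbf{\mathcal{E}}_h$ (resp.\ $K\in\mathcal{K}_h$) and then sum. I describe the vector case on $E$, the scalar case being identical. Fix a reference element $\hat E$ and let $F_E:\hat E\to E$ be the affine bijection onto $E$. Shape--regularity of the meshes gives $|\det DF_E|\simeq h_E^{d}$, $\|DF_E\|\lesssim h_E$ and $\|DF_E^{-1}\|\lesssim h_E^{-1}$, so for the pullback $\hat w=w\circ F_E$ one has $|\hat w|_{j,\hat E}\simeq h_E^{\,j-d/2}|w|_{j,E}$ (by interpolation also for fractional $j$), while the change of variables on $\partial E$, whose surface measure scales like $h_E^{d-1}$, yields $\|\hat z\|_{0,\partial\hat E}\simeq h_E^{(1-d)/2}\|z\|_{0,\partial E}$. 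Because polynomials pull back to polynomials and the $L^2$ inner product transforms by the constant $|\det DF_E|$, the $L^2$--projection commutes with the pullback, $\widehat{\Pi_h w}=\hat\Pi\hat w$, where $\hat\Pi$ is the $L^2(\hat E)$--projection onto $\mathbb{P}_k(\hat E)$.

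On $\hat E$ I invoke the Bramble--Hilbert / Deny--Lions lemma: for $\hat w\in H^{s}(\hat E)$ and any $l\le s$ there is $\hat q\in\mathbb{P}_k(\hat E)$ with $\|\hat w-\hat q\|_{l,\hat E}\le C\,|\hat w|_{s,\hat E}$ whenever $s\le k+1$ (for $s>k+1$ one caps $s$ at $k+1$, which is where the minima appear after scaling). To pass from the abstract $\hat q$ to $\hat\Pi$, I use that $\hat\Pi$ is the best $L^2(\hat E)$--approximation, so $\|\hat w-\hat\Pi\hat w\|_{0,\hat E}\le\|\hat w-\hat q\|_{0,\hat E}$; and for the derivative bounds I write $|\hat w-\hat\Pi\hat w|_{1,\hat E}\le|\hat w-\hat q|_{1,\hat E}+|\hat q-\hat\Pi\hat w|_{1,\hat E}$ and control the last term with the inverse inequality $|\hat q-\hat\Pi\hat w|_{1,\hat E}\le C\|\hat q-\hat\Pi\hat w\|_{0,\hat E}\le C(\|\hat w-\hat q\|_{0,\hat E}+\|\hat w-\hat\Pi\hat w\|_{0,\hat E})$, legitimate since $\hat q-\hat\Pi\hat w\in\mathbb{P}_k(\hat E)$. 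Feeding these reference estimates into the scaling identities above, with $s=\min\{m,k+1\}$ for part (i) and $s=\min\{1+m,k+1\}$ for the first line of part (ii), produces the powers $h_E^{\min\{m,k+1\}}$ and $h_E^{\min\{m,k\}}$.

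For the boundary estimates in (ii) I combine the same polynomial--approximation bound on $\hat E$ with the trace inequality $\|\hat z\|_{0,\partial\hat E}\le C\|\hat z\|_{H^t(\hat E)}$, valid for $t>\tfrac12$; this is precisely where $m>\tfrac12$ enters, guaranteeing that $\nabla\hat w$ (hence $\hat w-\hat\Pi\hat w$ and $\nabla\hat w-\hat\Pi(\nabla\hat w)$) has a well--defined boundary trace. Applying this to $\hat z=\hat w-\hat\Pi\hat w$ and to $\hat z=\nabla\hat w-\hat\Pi(\nabla\hat w)$ (the latter being the componentwise $L^2$--projection error of $\nabla w\in\mathbf{H}^m(E)$), then scaling, gives the last two lines of (ii); the extra $h_E^{-1/2}$ in the exponents is exactly the $\tfrac{1-d}{2}$ weight from the boundary change of variables combined with the $-\tfrac{d}{2}$ from the interior seminorm scaling, and one sums over $\partial E$ using that $E$ has a bounded number of faces.

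None of this is deep — it is the interpolation theory of \cite{Ciarlet:1978} — and the only genuine (though minor) obstacle is that $\Pi_h$ is the $L^2$--projection rather than a nodal interpolant: the naive argument that "replace $w$ by an interpolant that reproduces $\mathbb{P}_k$" is unavailable, so the derivative and trace estimates must instead go through the best--approximation property together with the inverse inequality as above, and, when $\tfrac12<m<1$, through the fractional--order trace estimate rather than the multiplicative trace inequality. The remaining work is purely the bookkeeping of the powers of $h_E$ and $h_K$ under the affine map, which is what pins down the exponents $\min\{m,k+1\}$, $\min\{m,k+1\}-\tfrac12$ and $\min\{m,k\}-\tfrac12$.
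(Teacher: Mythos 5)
The paper offers no proof of Lemma~\ref{lemma3.3} at all --- it is stated as a recollection of standard approximation properties with a pointer to \cite{Ciarlet:1978} --- so there is no in-paper argument to compare yours against; what you have written is the standard reference-element proof, and it is essentially correct. In particular, the passage from the Deny--Lions polynomial $\hat q$ to the $L^2$-projection via the best-approximation property in $L^2(\hat E)$ together with the inverse (norm-equivalence) inequality on $\mathbb{P}_k(\hat E)$ is a legitimate way to compensate for the fact that $\Pi_h$ is not a nodal interpolant, and your remark that for $\tfrac12<m<1$ the boundary estimates must go through the fractional trace inequality $\|\hat z\|_{0,\partial\hat E}\le C\|\hat z\|_{t,\hat E}$ with $t>\tfrac12$ (rather than the multiplicative trace inequality, which would require $H^1$ control of $\nabla\mathbf{v}-\Pi_h(\nabla\mathbf{v})$) is exactly the right point. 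Two caveats are worth recording. First, part (i) as printed bounds $\|\mathbf{v}\|_{0,E}+\|\phi\|_{0,K}$ itself; taken literally this is false (test with a nonzero constant), and you have silently proved the intended statement with $\|\mathbf{v}-\Pi_h\mathbf{v}\|_{0,E}+\|\phi-\Pi_h\phi\|_{0,K}$ on the left, which is the version actually invoked in the proof of Theorem~\ref{theorem3.1}; you should say so explicitly rather than correct the statement tacitly. Second, your argument uses that $F_E$ is affine with constant Jacobian both for the seminorm scaling $|\hat w|_{j,\hat E}\simeq h_E^{\,j-d/2}|w|_{j,E}$ and, crucially, for the commutation $\widehat{\Pi_h w}=\hat\Pi\hat w$; the paper nominally admits quadrilateral and hexahedral elements, for which the element map is only multilinear, the Jacobian is nonconstant, and this commutation fails, so strictly speaking your proof covers the simplicial case (which is all the numerical experiments use) and would need the usual modifications for the general case treated in \cite{Ciarlet:1978}.
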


   As a consequence of the approximation properties in Lemma~\ref{lemma3.3}, we obtain the following results.
   \begin{lemma}\label{lemma3.4}
   {Let $(\mathbf{v},\phi)\in H^{1+m}(E)\times H^{1+m}(K)$, $m>\frac{1}{2}$.  Then the following hold:\\
   (i) We have
   \begin{equation*}
     \|\mathbf{u}-\Pi_h\mathbf{u}\|_{h}+\|\varphi-\Pi_h\varphi\|_{h}\leq C_Ah^{\min\{m, k\}}\left(\|\mathbf{u}\|_{1+m,\Omega}+\|\phi\|_{1+m,\Omega_R}\right),
   \end{equation*}
   with a constant $C_A$ that is independent of the mesh size. \\
   (ii) For $(\mathbf{v},\phi)\in \mathbf{V}_1(h)\times V_2(h)$, the form $r_h(\mathbf{u},\varphi;\mathbf{v},\phi)$ in~\eqref{error1} can be bounded by
   \begin{eqnarray*}
     |r_h(\mathbf{u},\varphi;\mathbf{v},\phi)| \leq C_Rh^{\min\{m,k\}}\left(\left(\sum\limits_{E\in\mathcal{E}_h}\|[\mathbf{v}]\|_{0,\partial E}^2\right)^{\frac{1}{2}}\|\mathbf{u}\|_{1+m,\Omega}
     +\left(\sum\limits_{K\in\mathcal{K}_h}\|[\phi]\|^2_{0,\partial K}\right)^{\frac{1}{2}}\|\varphi\|_{1+m,\Omega_R}\right),
   \end{eqnarray*}
   with a constant $C_R$ that is independent of $h$ and depends only $\alpha$ and the constants in Lemma~\ref{lemma3.3}.
   }
      \end{lemma}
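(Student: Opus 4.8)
The plan is to reduce everything to the local approximation estimates of Lemma~\ref{lemma3.3}, combined with the mesh bound $h_E,h_K\le h$ and a finite-overlap argument (each element touches at most $n_0+1$ faces) for passing from sums over edges to sums over elements. I would prove (i) and (ii) separately, writing throughout $\Pi_h$ for the $L^2$-projection onto $\mathbf{\mathcal D}_k(\mathbf{\mathcal E}_h)$ (resp. $\mathcal D_k(\mathcal K_h)$), so that $\Pi_h\mathbf u\in\mathbf V_1(h)$, $\Pi_h\varphi\in V_2(h)$ and the energy norms are well defined.

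\emph{Part (i).} I would split $\|\mathbf u-\Pi_h\mathbf u\|_h^2$ into its volumetric part and its jump part. For the volumetric part, on each $E$ the definition of the strain tensor gives $\|\varepsilon(\mathbf u-\Pi_h\mathbf u)\|_{F(E)}\le C\|\nabla(\mathbf u-\Pi_h\mathbf u)\|_{0,E}$ and $\|\nabla\cdot(\mathbf u-\Pi_h\mathbf u)\|_{0,E}\le C\|\nabla(\mathbf u-\Pi_h\mathbf u)\|_{0,E}$ (the constants absorbing $\lambda,\mu,\rho_1$), so the first estimate of Lemma~\ref{lemma3.3}(ii) bounds the $E$-contribution by $Ch_E^{\min\{m,k\}}\|\mathbf u\|_{1+m,E}$; squaring, summing over $E$ and using $h_E\le h$ and $\sum_E\|\mathbf u\|_{1+m,E}^2=\|\mathbf u\|_{1+m,\Omega}^2$ gives $Ch^{2\min\{m,k\}}\|\mathbf u\|_{1+m,\Omega}^2$. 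For the jump part, since $\mathbf u\in\mathbf H^{1+m}(\Omega)\subset\mathbf H^1(\Omega)$ we have $[\mathbf u]=0$ on every edge, hence $[\mathbf u-\Pi_h\mathbf u]=-[\Pi_h\mathbf u]$; on an interior edge $e=\partial E_1\cap\partial E_2$ the continuity of $\mathbf u$ lets me bound $\|[\Pi_h\mathbf u]\|_{0,e}$ by $\|\mathbf u-\Pi_h\mathbf u\|_{0,\partial E_1}+\|\mathbf u-\Pi_h\mathbf u\|_{0,\partial E_2}$, which the second (boundary) estimate of Lemma~\ref{lemma3.3}(ii) controls; the weight $1/h$ in $\|\cdot\|_h$ is exactly what makes $\frac1h$ times the squared boundary bound reproduce the $h^{2\min\{m,k\}}$ rate, and summing over edges (finite overlap) yields again $Ch^{2\min\{m,k\}}\|\mathbf u\|_{1+m,\Omega}^2$. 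The identical argument for $\varphi$, $\Pi_h\varphi$ and $\|\cdot\|_h$ on $V_2(h)$ closes (i).

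\emph{Part (ii).} Starting from the definition~\eqref{error1} of $r_h$, I would apply Cauchy--Schwarz on each edge in both sums, then a discrete Cauchy--Schwarz over the edges, getting $|r_h|\le\big(\sum_e\|[\mathbf v]\|_{0,e}^2\big)^{1/2}\big(\sum_e\|\{\sigma(\mathbf u)-\Pi_h(\sigma(\mathbf u))\}\|_{0,e}^2\big)^{1/2}+\big(\sum_e\|[\phi]\|_{0,e}^2\big)^{1/2}\big(\sum_e\|\{\nabla\varphi-\Pi_h(\nabla\varphi)\}\|_{0,e}^2\big)^{1/2}$. I would bound each average by its two one-sided traces, and — since $\sigma(\mathbf u)=\lambda(\nabla\cdot\mathbf u)\mathbf I+2\mu\varepsilon(\mathbf u)$ is a constant-coefficient linear combination of first derivatives of $\mathbf u$ and $\Pi_h$ is linear — use $\|\sigma(\mathbf u)-\Pi_h(\sigma(\mathbf u))\|_{0,\partial E}\le C\|\nabla\mathbf u-\Pi_h(\nabla\mathbf u)\|_{0,\partial E}$, which is estimated by the third bound of Lemma~\ref{lemma3.3}(ii); the $\nabla\varphi$-average is handled directly by that same bound. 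Summing over elements (finite overlap again) and using $h_E\le h$ turns the second factors into $C h^{\min\{m,k\}}\|\mathbf u\|_{1+m,\Omega}$ and $C h^{\min\{m,k\}}\|\varphi\|_{1+m,\Omega_R}$, while $\big(\sum_e\|[\mathbf v]\|_{0,e}^2\big)^{1/2}\le\big(\sum_E\|[\mathbf v]\|_{0,\partial E}^2\big)^{1/2}$ (each interior edge counted twice on the right); with $C_R$ collecting $\alpha$ and the constants of Lemma~\ref{lemma3.3}, this is the asserted estimate.

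\emph{Main obstacle.} The one genuinely delicate point, present in both parts, is that the $L^2$-projection is \emph{nonconforming}: $[\mathbf u-\Pi_h\mathbf u]$ and $\{\sigma(\mathbf u)-\Pi_h(\sigma(\mathbf u))\}$ do not vanish on interelement edges, so the interior $H^1$-estimates alone are insufficient and one must pass through the edge/trace estimates of Lemma~\ref{lemma3.3}(ii), using $[\mathbf u]=0$ (resp. $[\varphi]=0$) to rewrite the jump as a one-sided projection error. Checking that, after the $1/h$ weight in $\|\cdot\|_h$ in part (i) and the discrete Cauchy--Schwarz over edges in part (ii), the surviving power of $h$ is the claimed $h^{\min\{m,k\}}$, together with the accompanying bookkeeping for the face-to-element sums (the maximal neighbour count $n_0$), is where the care is needed; the rest is routine.
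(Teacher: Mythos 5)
The paper offers no proof of Lemma~\ref{lemma3.4} at all --- it is asserted as an immediate ``consequence of the approximation properties in Lemma~\ref{lemma3.3}'' --- so your route (element-by-element splitting of the energy norm, using $[\mathbf{u}]=[\varphi]=0$ to turn jumps of $\Pi_h$ into one-sided projection errors, edgewise Cauchy--Schwarz plus a finite-overlap count for $r_h$) is exactly the intended, and essentially the only, argument. Your identification of the nonconformity of the $L^2$-projection as the crux is also right. The problem is that your bookkeeping of the powers of $h$ does not actually close in two places, and you assert that it does without checking. In part (i), you claim the $1/h$ weight in $\|\cdot\|_h$ combined with the squared boundary estimate of Lemma~\ref{lemma3.3}(ii) ``reproduces the $h^{2\min\{m,k\}}$ rate''. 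With the exponent as printed there, $\|\mathbf{v}-\Pi_h\mathbf{v}\|_{0,\partial E}\le Ch_E^{\min\{m,k+1\}-1/2}\|\mathbf{v}\|_{1+m,E}$, the jump contribution is $h^{-1}\cdot h^{2\min\{m,k+1\}-1}=h^{2\min\{m,k+1\}-2}$, which equals the claimed $h^{2\min\{m,k\}}$ only when $m\ge k+1$; for $m\le k$ it is $h^{2m-2}$, a loss of two powers of $h$. Your step is valid only after replacing that exponent by the standard one for $\mathbf{v}\in\mathbf{H}^{1+m}(E)$, namely $\min\{m+1,k+1\}-\tfrac12=\min\{m,k\}+\tfrac12$ (surely what the authors intended), and you should say so explicitly rather than lean on the printed form.

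The gap in part (ii) is more substantive. The third estimate of Lemma~\ref{lemma3.3}(ii) gives $\|\{\sigma(\mathbf{u})-\Pi_h(\sigma(\mathbf{u}))\}\|_{0,e}\le Ch^{\min\{m,k\}-1/2}\|\mathbf{u}\|_{1+m,\cdot}$ (and likewise for $\nabla\varphi$), so your edgewise Cauchy--Schwarz yields
\[
|r_h(\mathbf{u},\varphi;\mathbf{v},\phi)|\le C h^{\min\{m,k\}-\frac12}\Bigl(\sum_{E\in\mathcal{E}_h}\|[\mathbf{v}]\|_{0,\partial E}^2\Bigr)^{\frac12}\|\mathbf{u}\|_{1+m,\Omega}+\cdots,
\]
which falls short of the asserted $h^{\min\{m,k\}}$ by a factor $h^{1/2}$. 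Unlike part (i), the unweighted jump sums appearing on the right-hand side of the stated lemma contain no $1/h$ weight that could absorb this deficit, and the edge estimate $h^{\min\{m,k\}-1/2}$ cannot be improved (it is the volume rate minus the half power lost to the trace). So the bound you claim to derive is not what your own computation produces; the missing half power only reappears when the jump factor is converted into the $h^{-1/2}$-weighted DG seminorm, which is how the estimate is actually consumed in Lemma~\ref{lemma3.5}. You should either record (ii) with the exponent $\min\{m,k\}-\tfrac12$, or with the weighted factor $\bigl(h^{-1}\sum_e\|[\mathbf{v}]\|^2_{0,e}\bigr)^{1/2}$, and note that the downstream use is unaffected. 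As written, the final step of your part (ii) is wrong.
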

   \begin{lemma}\label{lemma3.5}
   {Assume the analytical solution $(\mathbf{u},\varphi)$ of~\eqref{eq:rdfsi1}-\eqref{eq:rdfsi6} satisfy
   \[
     \mathbf{u}\in L^{\infty}\left(J;\mathbf{H}^{1+m}(\mathbf{\mathcal{E}}_h)\right),\,\,\mathbf{u}_t\in L^{\infty}\left(J;\mathbf{H}^{1+m}(\mathbf{\mathcal{E}}_h)\right),
   \]
   and
   \[
     \varphi\in L^{\infty}\left(J;H^{1+m}(\mathcal{K}_h)\right),\,\,\varphi_t\in L^{\infty}\left(J; H^{1+m}(\mathcal{K}_h)\right)
   \]
   for a regularity exponent $m > 1/2$.  Let
   \[
     \mathbf{v}\in \mathcal{C}^0\left(J;\mathbf{V}_1(h)\right),\,\,\mathbf{v}_t\in L^1\left(J;\mathbf{V}_1(h)\right),\,
     \phi\in \mathcal{C}^0\left(J;V_2(h)\right),\,\,\phi_t\in L^1\left(J;V_2(h)\right).
   \]
   Then we have
   \begin{align*}
       & \int_0^T|r_h(\mathbf{u},\varphi;\mathbf{v}_t,\phi_t)|dt \\
       \leq&\, C_Rh^{\min\{m,k\}}\Big\{\|\mathbf{v}\|_{L^{\infty}(J;\mathbf{V}_1(h))}
         \left(2\|\mathbf{u}\|_{L^{\infty}(J;\mathbf{H}^{1+m}(\Omega))}+T\|\mathbf{u}_t\|_{L^{\infty}(J;\mathbf{H}^{1+,}(\Omega))}\right)\\
       &\qquad+\|\phi\|_{L^{\infty}(J;V_2(h))}\left(2\|\varphi\|_{L^{\infty}(J;H^{1+m}(\Omega_R))}+T\|\varphi_t\|_{L^{\infty}(J;H^{1+m}(\Omega_R))}\right)\Big\},
   \end{align*}
   where $C_R$ is the constant from the bound (ii) in Lemma~\ref{lemma3.4}.
   }
   \end{lemma}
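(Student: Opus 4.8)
The plan is to transfer the time derivative off the test pair $(\mathbf{v}_t,\phi_t)$, since the asserted right-hand side controls $\mathbf{v},\phi$ only in $L^\infty(J;\cdot)$ and not their time derivatives. The lever is the bilinearity of $r_h$: it is linear in the solution pair $(\mathbf{u},\varphi)$ (through $\sigma(\mathbf{u})$, $\nabla\varphi$, and the linear projection $\Pi_h$) and linear in the test pair (through the jumps). Because the mesh is fixed in time, jumps commute with $\partial_t$, i.e.\ $[\mathbf{v}_t]=\partial_t[\mathbf{v}]$ and $[\phi_t]=\partial_t[\phi]$, so under the stated regularity $t\mapsto r_h(\mathbf{u},\varphi;\mathbf{v},\phi)$ is absolutely continuous and the product rule gives
\[
  r_h(\mathbf{u},\varphi;\mathbf{v}_t,\phi_t)=\frac{d}{dt}\,r_h(\mathbf{u},\varphi;\mathbf{v},\phi)-r_h(\mathbf{u}_t,\varphi_t;\mathbf{v},\phi).
\]

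First I would integrate this identity over $J$ and apply the fundamental theorem of calculus to the exact total-derivative term, isolating the two endpoint evaluations:
\[
  \int_J r_h(\mathbf{u},\varphi;\mathbf{v}_t,\phi_t)\,dt
   = r_h(\mathbf{u},\varphi;\mathbf{v},\phi)\big|_{t=T}-r_h(\mathbf{u},\varphi;\mathbf{v},\phi)\big|_{t=0}
   -\int_J r_h(\mathbf{u}_t,\varphi_t;\mathbf{v},\phi)\,dt.
\]
Each endpoint evaluation is estimated by the residual bound (ii) of Lemma~\ref{lemma3.4}, in which the jump seminorm $\big(\sum_{E}\|[\mathbf{v}]\|_{0,\partial E}^2\big)^{1/2}$ is absorbed (using $|e|\le h\le 1$) into the energy norm $\|\mathbf{v}\|_h=\|\mathbf{v}\|_{\mathbf{V}_1(h)}$, and analogously for $\phi$. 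Bounding both evaluations at a fixed time by their suprema over $J$ then yields the contributions $\|\mathbf{v}\|_{L^\infty(J;\mathbf{V}_1(h))}\,2\|\mathbf{u}\|_{L^\infty(J;\mathbf{H}^{1+m}(\Omega))}$ and $\|\phi\|_{L^\infty(J;V_2(h))}\,2\|\varphi\|_{L^\infty(J;H^{1+m}(\Omega_R))}$, which explain the coefficient $2$ in the statement.

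Next I would bound the remaining term by taking the modulus under the integral and applying Lemma~\ref{lemma3.4}(ii) pointwise in $t$, now with solution pair $(\mathbf{u}_t,\varphi_t)$ and test pair $(\mathbf{v},\phi)$. A Hölder estimate in time pairs $\|\mathbf{v}\|_h\in L^\infty(J)$ with $\|\mathbf{u}_t\|_{1+m,\Omega}\in L^1(J)$, and $\int_J\|\mathbf{u}_t\|_{1+m,\Omega}\,dt\le T\|\mathbf{u}_t\|_{L^\infty(J;\mathbf{H}^{1+m}(\Omega))}$ (and likewise for the acoustic quantity), producing exactly the factors $T\|\mathbf{u}_t\|_{L^\infty(J;\mathbf{H}^{1+m}(\Omega))}\|\mathbf{v}\|_{L^\infty(J;\mathbf{V}_1(h))}$ and $T\|\varphi_t\|_{L^\infty(J;H^{1+m}(\Omega_R))}\|\phi\|_{L^\infty(J;V_2(h))}$. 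Collecting the two endpoint pieces with this integral piece, and using the single constant $C_R$ from Lemma~\ref{lemma3.4}, assembles the asserted bound.

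The main obstacle is precisely the placement of the modulus relative to the time integration. Since the target norms measure $\mathbf{v},\phi$ only in $L^\infty$ in time, the derivative must be moved off the test functions \emph{before} any pointwise-in-$t$ estimate, and this transfer is available only by integrating the exact total-derivative term $\tfrac{d}{dt}r_h(\mathbf{u},\varphi;\mathbf{v},\phi)$ over $J$ and evaluating it at the two endpoints. I would therefore organize the estimate so that the modulus is applied to the three reduced pieces—the two endpoint evaluations of $r_h(\mathbf{u},\varphi;\mathbf{v},\phi)$ and the single time integral of $r_h(\mathbf{u}_t,\varphi_t;\mathbf{v},\phi)$—each of which is controlled by Lemma~\ref{lemma3.4}(ii) as above; this is the step that must be carried out with care, as the endpoint terms account for the $2\|\mathbf{u}\|_{L^\infty}$ part and the integral term for the $T\|\mathbf{u}_t\|_{L^\infty}$ part of the final estimate.
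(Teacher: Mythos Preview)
Your proposal is correct and follows essentially the same route as the paper: integrate by parts in time to move the derivative from the test pair onto the solution pair, then apply Lemma~\ref{lemma3.4}(ii) to the two endpoint evaluations (giving the factor $2$) and to the integrand $r_h(\mathbf{u}_t,\varphi_t;\mathbf{v},\phi)$ (giving the factor $T$). Your handling of the modulus---applying it only after the integration-by-parts identity to the three reduced pieces---is in fact cleaner than the paper's own presentation, which places the absolute value inside the time integral before performing the identity.
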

   {\bf Proof:} From the definition of $r_h$ in~\eqref{error1} and integration by parts, we obtain
   \begin{align*}
     &\, \int_0^T|r_h(\mathbf{u},\varphi;\mathbf{v}_t,\phi_t)|\,dt \\
    =& \,\int_0^T \sum\limits_{e\in\Gamma_h\cup\Gamma_{I}^1}\int_e[\mathbf{v}_t]\cdot\{\sigma(\mathbf{u}) -\Pi_h(\sigma(\mathbf{u}))\} \,dSdt + \int_0^T\sum\limits_{e\in\Gamma_h\cup\Gamma_{I}^2}\int_e[\phi_t]\cdot\{\nabla\varphi-\Pi_h(\nabla\varphi)\}\,dSdt\\
    =&\,-\int_0^T \sum\limits_{e\in\Gamma_h\cup\Gamma_{I}^1}\int_e[\mathbf{v}]\cdot\{\sigma(\mathbf{u}_t)-\Pi_h(\sigma(\mathbf{u}_t))\}\,dSdt -  \int_0^T\sum\limits_{e\in\Gamma_h\cup\Gamma_{I}^2}\int_e[\phi]\cdot\{\nabla\varphi_t-\Pi_h(\nabla\varphi_t)\}\,dSdt\\
    &\qquad+\left. \sum\limits_{e\in\Gamma_h\cup\Gamma_{I}^1}\int_e[\mathbf{v}]\cdot\{\sigma(\mathbf{u}) -\Pi_h(\sigma(\mathbf{u}))\}\,dS\right|_{t=0}^{t=T}
     +\left.\sum\limits_{e\in\Gamma_h\cup\Gamma_{I}^2}\int_e[\phi]\cdot\{\nabla\varphi-\Pi_h(\nabla\varphi)\}\,dS\right|_{t=0}^{t=T}\\
       =&\,-\int_0^T|r_h(\mathbf{u}_t,\varphi_t;\mathbf{v},\phi)|\,dt+r_h(\mathbf{u},\varphi;\mathbf{v},\phi)\Big|_{t=0}^{t=T}.
   \end{align*}
   Lemma~\ref{lemma3.4} implies
    \begin{align*}
      \int_0^T|r_h(\mathbf{u}_t,\varphi_t;\mathbf{v},\phi)|\,dt \leq C_Rh^{\min\{m,k\}}T &\left(\|\mathbf{v}\|_{L^{\infty}(J;\mathbf{V}_1(h))}\|\mathbf{u}_t\|_{L^{\infty}(J;\mathbf{H}^{1+m}(\Omega))}\right. \\
      &\qquad \left.+\|\phi\|_{L^{\infty}(J;V_2(h))}\|\varphi_t\|_{L^{\infty}(J;H^{1+m}(\Omega_R))}\right),
   \end{align*}
   and
   \begin{align*}
     \left|r_h(\mathbf{u},\varphi;\mathbf{v},\phi)\big|_{t=0}^{t=T}\right| \leq 2C_Rh^{\min\{m,k\}}&\left(\|\mathbf{v}\|_{L^{\infty}(J;\mathbf{V}_1(h))}\|\mathbf{u}\|_{L^{\infty}(J;\mathbf{H}^{1+m}(\Omega))}\right. \\
     &\qquad\left.+\|\phi\|_{L^{\infty}(J;V_2(h))}\|\varphi\|_{L^{\infty}(J;H^{1+m}(\Omega_R))}\right),
   \end{align*}
   which finishes the proof of the lemma.\qed

   \begin{theorem}\label{theorem3.1}
   {Assume that the solution of \eqref{eq:rdfsi1}-\eqref{eq:rdfsi6} satisfy
   \[
     \mathbf{u}\in L^{\infty}\left(J;\mathbf{H}^{1+m}(\mathbf{\mathcal{E}}_h)\right),\,\mathbf{u}_t\in L^{\infty}\left(J; \mathbf{H}^{1+m}(\mathbf{\mathcal{E}}_h)\right),\,\mathbf{u}_{tt}\in L^{1}\left(J;\mathbf{H}^m(\mathbf{\mathcal{E}}_h)\right),
    \]
    and
    \[
      \varphi\in L^{\infty}\left(J;H^{1+m}(\mathcal{K}_h)\right),\,\varphi_t\in L^{\infty}\left(J;H^{1+m}(\mathcal{K}_h)\right),\,\varphi_{tt}\in L^{1}\left(J;H^m(\mathcal{K}_h)\right),
    \]
   for a regularity exponent $m>1/2$, and let $\mathbf{u}^h$, $\varphi^h$ be the semidiscrete discontinuous Galerkin approximation obtained by~\eqref{eq:IPDGVF1}. Then the error $\mathbf{e}^u=\mathbf{u}-\mathbf{u}^h$ and $e^\varphi=\varphi-\varphi^h$ satisfy the estimate:
   \begin{align*}
     & \|\mathbf{e}^u_t\|_{L^{\infty}(J;L^2(\Omega))} + \|\mathbf{e}^u\|_{L^{\infty}(J;\mathbf{V}_1)} \leq C\left(\|\mathbf{e}^u_t(\mathbf{0})\|_{0,\Omega} + \|\mathbf{e}^u(\mathbf{0})\|_h\right) \\
       & \qquad\qquad + Ch^{\min\{m,k\}}\left(\|\mathbf{u}\|_{L^{\infty}(J;\mathbf{H}^{1+m}(\Omega))} + T\|\mathbf{u}_t\|_{L^{\infty}(J; \mathbf{H}^{1+m}(\Omega))}+\|\mathbf{u}_{tt}\|_{L^{1}(J;\mathbf{H}^m(\Omega))}\right),
   \end{align*}
   and
   \begin{align*}
    & \|e^\varphi_t\|_{L^{\infty}(J;L^2(\Omega_R))}+ \|e^\varphi\|_{L^{\infty}(J;V_2(h))} \leq C\left( \|e^\varphi_t(0)\|_{0,\Omega} + \|e^\varphi(0)\|_h\right) \\
   &\qquad\qquad + Ch^{\min\{m,k\}}\left(\|\varphi\|_{L^{\infty}(J;H^{1+m}(\Omega_R))}
   + T\|\varphi_t\|_{L^{\infty}(J; H^{1+m}(\Omega_R))}+\|\varphi_{tt}\|_{L^{1}(J;H^m(\Omega_R))}\right),
   \end{align*}
   with the constant $C$ that is independent of $h$ and $T$.}
   \end{theorem}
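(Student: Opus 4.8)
The plan is to follow the standard energy argument for SIPDG methods applied to second-order wave equations (as in~\cite{MJGASDS2006}), adapted to the coupled acoustic-elastic system. I would split the error as $\mathbf{e}^u = (\mathbf{u} - \Pi_h\mathbf{u}) + (\Pi_h\mathbf{u} - \mathbf{u}^h) =: \boldsymbol{\eta}^u + \boldsymbol{\xi}^u$ and similarly $e^\varphi = \eta^\varphi + \xi^\varphi$, where the first terms are controlled by the approximation estimates in Lemma~\ref{lemma3.3} and Lemma~\ref{lemma3.4}(i), and the second terms lie in the discrete spaces $\mathbf{\mathcal{D}}_k(\mathbf{\mathcal{E}}_h)\times\mathcal{D}_k(\mathcal{K}_h)$. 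The triangle inequality then reduces the problem to bounding $\|\boldsymbol{\xi}^u_t\|_{L^\infty(J;L^2)}+\|\boldsymbol{\xi}^u\|_{L^\infty(J;\mathbf{V}_1(h))}$ and the analogue for $\xi^\varphi$.

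The core step is the energy identity. Starting from the error equation \eqref{error2} with test functions $(\mathbf{v},\phi)=(\boldsymbol{\xi}^u_t,\xi^\varphi_t)$, I would write $\mathbf{e}^u = \boldsymbol{\eta}^u+\boldsymbol{\xi}^u$, move the $\boldsymbol{\eta}$-contributions to the right-hand side, and use the symmetry of $a_h$ together with the identity $a_h(\boldsymbol{\xi},\boldsymbol{\xi}_t) = \tfrac12\tfrac{d}{dt}a_h(\boldsymbol{\xi},\boldsymbol{\xi})$ on the symmetric (bulk + penalty) part. The boundary-coupling terms $(\varphi_t\mathbf{n},\mathbf{v})_{0,\Gamma}$ and $-(\mathbf{u}_t\cdot\mathbf{n},\phi)_{0,\Gamma}$ in $a_h$ are skew-symmetric in the pair $(\mathbf{u},\varphi)$, so when testing against $(\boldsymbol{\xi}^u_t,\xi^\varphi_t)$ the cross terms cancel up to a total time derivative, while the $\Gamma_R$ term $(\varphi_t,\phi)_{0,\Gamma_R}$ gives a dissipative contribution $\|\xi^\varphi_t\|_{0,\Gamma_R}^2\ge0$ that can be dropped. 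Combining with the mass terms $\tfrac{\rho_2}{\rho_1}\langle\boldsymbol{\xi}^u_{tt},\boldsymbol{\xi}^u_t\rangle=\tfrac12\tfrac{d}{dt}\tfrac{\rho_2}{\rho_1}\|\boldsymbol{\xi}^u_t\|_{0,\Omega}^2$ and likewise for $\xi^\varphi$, I obtain a differential inequality of the form $\tfrac{d}{dt}\mathcal{E}(t)\le (\text{right-hand side})$, where $\mathcal{E}(t)$ is equivalent (by Lemma~\ref{lemma4.1} coercivity and Lemma~\ref{lemma4.2} continuity) to $\|\boldsymbol{\xi}^u_t\|_{0,\Omega}^2+\|\boldsymbol{\xi}^u\|_h^2+\|\xi^\varphi_t\|_{0,\Omega_R}^2+\|\xi^\varphi\|_h^2$.

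The right-hand side contains three types of terms: the consistency defect $r_h(\mathbf{u},\varphi;\boldsymbol{\xi}^u_t,\xi^\varphi_t)$, which after integration in time is handled exactly by Lemma~\ref{lemma3.5}; the projection-error mass terms $\tfrac{\rho_2}{\rho_1}(\boldsymbol{\eta}^u_{tt},\boldsymbol{\xi}^u_t)+\tfrac1{c^2}(\eta^\varphi_{tt},\xi^\varphi_t)$, bounded by $\|\boldsymbol{\eta}^u_{tt}\|_{0,\Omega}\|\boldsymbol{\xi}^u_t\|_{0,\Omega}$ and controlled using Lemma~\ref{lemma3.3}(i) with the $\mathbf{u}_{tt}\in L^1(J;\mathbf{H}^m)$ regularity; and the projection-error stiffness terms $a_h(\boldsymbol{\eta}^u,\eta^\varphi;\boldsymbol{\xi}^u_t,\xi^\varphi_t)$, which I would again integrate by parts in time to move the derivative onto $\boldsymbol{\eta}$ (turning $a_h(\boldsymbol{\eta},\boldsymbol{\xi}_t)$ into $-a_h(\boldsymbol{\eta}_t,\boldsymbol{\xi})+$ boundary-in-time), then apply continuity (Lemma~\ref{lemma4.2}) and the approximation bounds. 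Integrating the differential inequality over $(0,t)$, using that the initial projection errors contribute only the $\|\mathbf{e}^u_t(\mathbf{0})\|_{0,\Omega}+\|\mathbf{e}^u(\mathbf{0})\|_h$ terms, and applying Gronwall's inequality yields the stated bound; finally one adds back $\|\boldsymbol{\eta}^u_t\|_{L^\infty(J;L^2)}+\|\boldsymbol{\eta}^u\|_{L^\infty(J;\mathbf{V}_1(h))}$ via Lemma~\ref{lemma3.4}(i) and the same for $\varphi$.

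The main obstacle I expect is the careful bookkeeping of the boundary-coupling terms on $\Gamma$: these are first-order in time (not the usual symmetric stiffness structure), so one must verify that testing against $(\boldsymbol{\xi}^u_t,\xi^\varphi_t)$ really produces a perfect time derivative of a non-negative (or at least controllable) boundary energy, rather than an indefinite term that would break the Gronwall argument; the skew-symmetry $(\varphi_t\mathbf{n},\boldsymbol{\xi}^u_t)_{0,\Gamma}-(\boldsymbol{\xi}^u_t\cdot\mathbf{n},\xi^\varphi_t)_{0,\Gamma}=0$ after matching up indices is what makes this work, and one must also be sure the $\Gamma_R$ and $\Gamma$ boundary $L^2$-norms picked up along the way are absorbed using the trace/Young-type estimates already invoked in the coercivity proof. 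A secondary technical point is that $a_h$ as written is not symmetric in its arguments because of these boundary terms, so "$a_h(\boldsymbol{\xi},\boldsymbol{\xi}_t)=\tfrac12\tfrac{d}{dt}a_h(\boldsymbol{\xi},\boldsymbol{\xi})$" only holds for the genuinely symmetric part, and the argument must be organized to isolate that part cleanly.
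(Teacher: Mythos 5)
Your proposal is correct and follows essentially the same route as the paper's proof: test the error equation of Lemma~\ref{lemma3.2} with the time derivative of the discrete part of the error, use the exact cancellation of the $\Gamma$-coupling terms and the nonnegative sign of the $\Gamma_R$ term, integrate by parts in time to handle the projection-error and $r_h$ contributions (Lemma~\ref{lemma3.5}), and close with coercivity (Lemma~\ref{lemma4.1}), continuity (Lemma~\ref{lemma4.2}) and the approximation bounds of Lemmas~\ref{lemma3.3}--\ref{lemma3.4}; whether one tracks the energy of $\xi=\Pi_h\mathbf{u}-\mathbf{u}^h$ and adds $\eta=\mathbf{u}-\Pi_h\mathbf{u}$ back at the end, as you do, or of the full error $\mathbf{e}^u$, as the paper does, is only bookkeeping, and your remark that the asserted ``symmetry'' of $a_h$ must be restricted to its genuinely symmetric bulk-plus-penalty part is a legitimate point the paper glosses over. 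The one correction: finish by absorbing the $L^{\infty}(J;\cdot)$ error factors on the right-hand side via Young's inequality, as the paper does, rather than invoking Gronwall, since Gronwall would yield a constant of order $e^{CT}$ while the theorem claims $C$ independent of $T$ with the $T$-dependence appearing explicitly in the estimate (and no term of the form $\int_0^t\mathcal{E}(s)\,ds$ actually arises, so Gronwall is not needed).
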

   {\bf Proof:} Because of Theorem~\ref{thm31}, we have
   \[
     \mathbf{e}^u\in \mathcal{C}^0\left(T;\mathbf{V}_1(h)\right)\cap \mathcal{C}^1\left(J;\mathbf{L}^2(\Omega)\right),\quad e^{\varphi}\in \mathcal{C}^0\left(J;V_2(h)\right)\cap \mathcal{C}^1\left(J;L^2(\Omega_R)\right).
   \]
   Using the symmetry of $a_h$ and the error equation in Lemma~\ref{lemma3.2}, we obtain
   \begin{align}
   \nonumber
   &\, \frac{1}{2}\frac{d}{dt}\left(\|\mathbf{e}_t^u\|_{0,\Omega}^2+\|e_t^{\varphi}\|_{0,\Omega_R}^2
     +a_h(\mathbf{e}^u,e^{\varphi};\mathbf{e}^u,e^{\varphi})\right) \\
     \nonumber
    =&\, (\mathbf{e}_{tt}^u,\mathbf{e}_t^u)+(e_{tt}^{\varphi},e_t^{\varphi})+a_h(\mathbf{e}^u,e^{\varphi};\mathbf{e}^u_t,e^{\varphi}_t)\\
   \nonumber
     =&\,\left(\mathbf{e}_{tt}^u,(\mathbf{u}-\Pi_h\mathbf{u})_t\right)+\left(e_{tt}^{\varphi},(\varphi-\Pi_h\varphi)_t\right) + a_h(\mathbf{e}^u,e^{\varphi};(\mathbf{u}-\Pi_h\mathbf{u})_t,(\varphi-\Pi_h\varphi)_t) \\
     &\qquad\qquad +r_h\left(\mathbf{u},\varphi;(\Pi_h\mathbf{u}-\mathbf{u}^h)_t,(\Pi_h\varphi-\varphi^h)_t\right) \label{eq:thm4.1}.
   \end{align}
   We fix $\tau\in[0,T]$ and integrate~\eqref{eq:thm4.1} over the time interval $(0,\tau)$. This yields
   \begin{align*}
     &\,\frac{1}{2}\|\mathbf{e}_t^u(\tau)\|_{0,\Omega}^2 + \frac{1}{2}\|e_t^{\varphi}(\tau)\|_{0,\Omega_R}^2+\frac{1}{2}a_h\left(\mathbf{e}^u(\tau),e^{\varphi}(\tau);\mathbf{e}^u(\tau),e^{\varphi}(\tau)\right) \\
     = &\,\frac{1}{2}\|\mathbf{e}_t^u(\mathbf{0})\|_{0,\Omega}^2 +\frac{1}{2}\|e_t^{\varphi}(0)\|_{0,\Omega_R}^2+\frac{1}{2}a_h(\mathbf{e}^u(0),e^{\varphi}(0);\mathbf{e}^u(\mathbf{0}),e^{\varphi}(0)) \\
     &\qquad+ \int_0^\tau a_h(\mathbf{e}^u,e^{\varphi};(\mathbf{u}-\Pi_h\mathbf{u})_t,(\varphi-\Pi_h\varphi)_t)\,dt + \int_0^{\tau}(\mathbf{e}_{tt}^u,(\mathbf{u}-\Pi_h\mathbf{u})_t)\,dt \\
     &\qquad\quad +\int_0^{\tau}(e_{tt}^{\varphi},(\varphi-\Pi_h\varphi)_t)\,dt + \int_0^{\tau}r_h\left(\mathbf{u},\varphi;(\Pi_h\mathbf{u}-\mathbf{u}^h)_t,(\Pi_h\varphi-\varphi^h)_t\right)\,dt.
   \end{align*}
   Integration by parts of the fifth and sixth terms on the right-hand side yields
   \[
     \int_0^{\tau}\left(\mathbf{e}_{tt}^u,(\mathbf{u}-\Pi_h\mathbf{u})_t\right)\,dt = \int_0^{\tau}\left(\mathbf{e}_{t}^u,(\mathbf{u}-\Pi_h\mathbf{u}\right)_{tt})\,dt + \left(\mathbf{e}_{t}^u,(\mathbf{u}-\Pi_h\mathbf{u})_t\right)\Big|_{t=0}^{t=\tau},
 \]
 and
 \[
   \int_0^{\tau}\left(e_{tt}^{\varphi},(\varphi-\Pi_h\varphi)_t\right)\,dt = \int_0^{\tau}\left(\mathbf{e}_{t}^{\varphi},(\varphi-\Pi_h\varphi)_{tt}\right)\,dt+\left(\mathbf{e}_{t}^{\varphi},(\varphi-\Pi_h\varphi)_t\right)\Big|_{t=0}^{t=\tau}.
 \]
 From the stability properties of $a_h$ in Lemma~\ref{lemma4.1} and Lemma~\ref{lemma4.2} and the H\"{o}lder's inequalities, we conclude that
   \begin{align*}
     &\,\frac{1}{2}\|\mathbf{e}_t^u(\tau)\|_{0,\Omega}^2 + \frac{1}{2}\|e_t^{\varphi}(\tau)\|_{0,\Omega_R}^2+\frac{1}{2}C_{\text{coer}}\|\mathbf{e}^u(\tau)\|^2_h +\frac{1}{2}C_{\text{coer}}\|e^{\varphi}(\tau)\|^2_h\\
     \leq&\,\frac{1}{2}\|\mathbf{e}_t^u(0)\|_{0,\Omega}^2 +\frac{1}{2}\|e_t^{\varphi}(0)\|_{0,\Omega_R}^2+\frac{1}{2}C_{\text{cont}}\|\mathbf{e}^u(0)\|^2_h+\frac{1}{2}C_{\text{cont}}\|e^{\varphi}(0)\|^2_h\\
     &\,\,+\|\mathbf{e}^u_t\|_{L^{\infty}(0,T;L^2(\Omega))} \left(\|(\mathbf{u}-\Pi_h\mathbf{u})_{tt}\|_{L^{1}(0,T;L^2(\Omega))}+2\|(\mathbf{u}-\Pi_h\mathbf{u})_{t}\|_{L^{\infty}(0,T;L^2(\Omega))}\right)\\
   &\quad+\|e^{\varphi}_t\|_{L^{\infty}(0,T;L^2(\Omega_R))} \left(\|(\varphi-\Pi_h\varphi)_{tt}\|_{L^{1}(0,T;L^2(\Omega_R))}+2\|(\varphi-\Pi_h\varphi)_t\|_{L^{\infty}(0,T;L^2(\Omega_R))}\right)\\
   &\qquad+C_{\text{cont}}T\left(\|\mathbf{e}^u\|_{L^{\infty}(J;\mathbf{V}_1(h))}\|(\mathbf{u}-\Pi_h\mathbf{u})_{t}\|_{L^{\infty}(J;\mathbf{V}_1(h))} + \|e^{\varphi}\|_{L^{\infty}(J;V_2(h))} \|(\varphi-\Pi_h\varphi)_t\|_{L^{\infty}(J;V_2(h))}\right)\\
   &\qquad\quad+\left|\int_0^{\tau}r_h(\mathbf{u},\varphi;(\Pi_h\mathbf{u}-\mathbf{u}^h)_t,(\Pi_h\varphi-\varphi^h)_t)\,dt\right|.
   \end{align*}
   Since this inequality holds for every $\tau \in J$, it also holds for the maximum over $J$, that is
   \begin{align*}
     &\,\|\mathbf{e}_t^u(\tau)\|_{L^{\infty}(J;\mathbf{L}^2(\Omega))}^2 + \|e_t^{\varphi}(\tau)\|_{L^{\infty}(J;L^2(\Omega_R))} ^2+C_{\text{coer}}\|\mathbf{e}^u(\tau)\|^2_{L^{\infty}(J;\mathbf{L}^2(\Omega))}
     + C_{\text{coer}}\|e^{\varphi}(\tau)\|^2_{L^{\infty}(J;L^2(\Omega_R))} \\
     \leq&\,\|\mathbf{e}_t^u(\mathbf{0})\|_{0,\Omega}^2+\|e_t^{\varphi}(0)\|_{0,\Omega_R}^2
     +C_{\text{cont}}\|\mathbf{e}^u(\mathbf{0})\|^2_h+C_{\text{cont}}\|e^{\varphi}(0)\|^2_h+T_1+T_2+T_3+T_4+T_5,
   \end{align*}
   with
   \begin{align*}
     T_1 &= 2\|\mathbf{e}^u_t\|_{L^{\infty}(J;\mathbf{L}^2(\Omega))} \left(\|(\mathbf{u}-\Pi_h\mathbf{u})_{tt}\|_{L^{1}(J;\mathbf{L}^2(\Omega))}+2\|(\mathbf{u}-\Pi_h\mathbf{u})_{t}\|_{L^{\infty}(J;\mathbf{L}^2(\Omega))}\right), \\
     T_2 &= 2\|e^{\varphi}_t\|_{L^{\infty}(J;L^2(\Omega_R))} \left(\|(\varphi-\Pi_h\varphi)_{tt}\|_{L^{1}(J;L^2(\Omega_R))}+2\|(\varphi-\Pi_h\varphi)_t\|_{L^{\infty}(J;L^2(\Omega_R))}\right), \\
     T_3 &= 2C_{\text{cont}}T\|\mathbf{e}^u\|_{L^{\infty}(J;\mathbf{V}_1(h))}\|(\mathbf{u}-\Pi_h\mathbf{u})_{t}\|_{L^{\infty}(J;\mathbf{V}_1(h))}, \\
     T_4 &= 2C_{\text{cont}}T\|e^{\varphi}\|_{L^{\infty}(J;V_2(h))} \|(\varphi-\Pi_h\varphi)_t\|_{L^{\infty}(J;V_2(h))}, \\
     T_5 &= 2\left|\int_0^{\tau}r_h(\mathbf{u},\varphi;(\Pi_h\mathbf{u}-\mathbf{u}^h)_t,(\Pi_h\varphi-\varphi^h)_t)dt\right|.
   \end{align*}
   Using the inequality that $|ab|\leq\frac{1}{2\epsilon}a^2+\frac{\epsilon}{2}b^2$, and the approximation results in Lemma~\ref{lemma3.3}, we conclude that
   \begin{align*}
     T_1 &\leq \frac{1}{2}\|\mathbf{e}^u_t\|^2_{L^{\infty}(J;\mathbf{L}^2(\Omega))}+ 2\left(\|(\mathbf{u}-\Pi_h\mathbf{u})_{tt}\|_{L^{1}(J;\mathbf{L}^2(\Omega))}+2\|(\mathbf{u}-\Pi_h\mathbf{u})_{t}\|_{L^{\infty}(J;\mathbf{L}^2(\Omega))}\right)^2 \\
     &\leq \frac{1}{2}\|\mathbf{e}^u_t\|^2_{L^{\infty}(J;\mathbf{L}^2(\Omega))}+ 4\|(\mathbf{u}-\Pi_h\mathbf{u})_{tt}\|^2_{L^{1}(J;\mathbf{L}^2(\Omega))}+16\|(\mathbf{u}-\Pi_h\mathbf{u})_{t}\|_{L^{\infty}(J;\mathbf{L}^2(\Omega))}^2 \\
     &\leq \frac{1}{2}\|\mathbf{e}^u_t\|^2_{L^{\infty}(J;\mathbf{L}^2(\Omega))}+ Ch^{2\min\{m,k\}}\left(\|\mathbf{u}_{tt}\|^2_{L^{1}(J;\mathbf{H}^{m}(\Omega))}+h^2\|\mathbf{u}_{t}\|_{L^{\infty}(J;\mathbf{H}^{1+m}(\Omega))}^2\right)
   \end{align*}
   and
   \begin{align*}
     T_2 &\leq \frac{1}{2}\|e^{\varphi}_t\|^2_{L^{\infty}(J;L^2(\Omega_R))}+ 2\left(\|(\varphi-\Pi_h\varphi)_{tt}\|_{L^{1}(J;L^2(\Omega_R))}+2\|(\varphi-\Pi_h\varphi)_t\|_{L^{\infty}(J;L^2(\Omega_R))}\right)^2 \\
      &\leq \frac{1}{2}\|e^{\varphi}_t\|^2_{L^{\infty}(J;L^2(\Omega_R))}+ 4\|(\varphi-\Pi_h\varphi)_{tt}\|^2_{L^{1}(J;L^2(\Omega_R))}+16\|(\varphi-\Pi_h\varphi)_t\|^2_{L^{\infty}(J;L^2(\Omega_R))} \\
      &\leq \frac{1}{2}\|e^{\varphi}_t\|^2_{L^{\infty}(J;L^2(\Omega_R))}+ Ch^{2\min\{m,k\}}\left(\|\varphi_{tt}\|^2_{L^{1}(J;H^{m}(\Omega_R))}+h^2\|\varphi_{t}\|_{L^{\infty}(J;H^{1+m}(\Omega_R))}^2\right)
   \end{align*}
   with the constant $C$ depends only on the constants in Lemma~\ref{lemma3.3}. Similarly,
   \begin{align*}
     T_3 &\leq \frac{1}{4}C_{\text{coer}}\|\mathbf{e}^u\|^2_{L^{\infty}(J;\mathbf{V}_1(h))}+4\frac{C_{\text{cont}}^2}{C_{\text{coer}}}T^2\|(\mathbf{u} -\Pi_h\mathbf{u})_{t}\|^2_{L^{\infty}(J;\mathbf{V}_1(h))} \\
     &\leq \frac{1}{4}C_{\text{coer}}\|\mathbf{e}^u\|^2_{L^{\infty}(J;\mathbf{V}_1(h))}+T^2Ch^{2\min\{m,k\}}\|\mathbf{u}_t\|^2_{L^{\infty}(J;\mathbf{H}^{1+,}(\Omega))}
   \end{align*}
   and
   \begin{align*}
     T_4 &\leq \frac{1}{4}C_{\text{coer}}\|e^{\varphi}\|^2_{L^{\infty}(J;V_2(h))}+4\frac{C_{\text{cont}}^2}{C_{\text{coer}}}T^2\|(\varphi -\Pi_h\varphi)_{t}\|^2_{L^{\infty}(J;V_2(h))} \\
     &\leq \frac{1}{4}C_{\text{coer}}\|e^{\varphi}\|^2_{L^{\infty}(J;V_2(h))}+T^2Ch^{2\min\{m,k\}}\|\varphi_t\|^2_{L^{\infty}(J;H^{1+m}(\Omega_R))}
   \end{align*}
   with the constant $C$ depends only on $C_{\text{cont}}$, $C_{\text{coer}}$ and the constants $C_A$ in Lemma~\ref{lemma3.4}.
    \par{} To bound the term $T_5$, we use the Lemma~\ref{lemma3.5} to obtain
    \[
      T_5\leq2C_Rh^{\min\{m,k\}}\left(\mathcal{R}_1\|\Pi_h\mathbf{u}-\mathbf{u}^h\|_{L^{\infty}(J;\mathbf{V}_1(h))}
      +\mathcal{R}_2\|\Pi_h\varphi-\varphi^h\|_{L^{\infty}(J;V_2(h))}\right)
    \]
    with
    \begin{align*}
      \mathcal{R}_1 &:= \left(T\|\mathbf{u}_{t}\|_{L^{\infty}(J;\mathbf{H}^{1+m}(\Omega))}+2\|\mathbf{u}\|_{L^{\infty}(J;\mathbf{H}^{1+m}(\Omega))}\right), \\
      \mathcal{R}_2 &:= \left(T\|\varphi_{t}\|_{L^{\infty}(J;H^{1+m}(\Omega_R))}+2\|\varphi\|_{L^{\infty}(J;H^{1+m}(\Omega_R))}\right).
    \end{align*}
    The triangle inequality, the geometric-arithmetic mean, and the approximation properties of $\Pi_h$ in~Lemma~\ref{lemma3.4} then yield
    \begin{align*}
      T_5\leq&\, 2C_Rh^{\min\{m,k\}}\left(\mathcal{R}_1\left(\|\mathbf{e}^u\|_{L^{\infty}(J;\mathbf{V}_1(h))}+\|\mathbf{u}
      -\Pi_h\mathbf{u}\|_{L^{\infty}(J;\mathbf{V}_1(h))}\right)\right.\\
      &\qquad\left.+\mathcal{R}_2\left(\|e^{\varphi}\|_{L^{\infty}(J;V_2(h))}+\|\varphi-\Pi_h\varphi\|_{L^{\infty}(J;V_2(h))}\right)\right)\\
      \leq&\,\frac{1}{4}\left(\|\mathbf{e}^u\|^2_{L^{\infty}(J;\mathbf{V}_1(h))}+\|e^{\varphi}\|^2_{L^{\infty}(J;V_2(h))}\right)
      +Ch^{\min\{m,k\}}\left(\mathcal{R}_1^2+\|\mathbf{u}\|^2_{L^{\infty}(J;\mathbf{H}^{1+m} (\Omega))}\right)\\
      &\qquad+Ch^{\min\{m,k\}}\left(\mathcal{R}_2^2+\|\varphi\|^2_{L^{\infty}(J;H^{1+m} (\Omega_R))}\right),
    \end{align*}
    with the constant $C$ depends only on $C_{R}$, $C_{\text{coer}}$ and the constants $C_A$. Combining the above estimates for $T_1,T_2,T_3,T_4$ and $T_5$ then shows that
   \begin{align*}
     &\,\frac{1}{2}\|\mathbf{e}_t^u\|_{L^{\infty}(J;\mathbf{L}^2(\Omega))}^2 + \frac{1}{2}\|e_t^{\varphi}\|_{L^{\infty}(J;L^2(\Omega_R))}^2+\frac{1}{2}C_{\text{coer}}\|\mathbf{e}^u\|^2_{L^{\infty}(J;\mathbf{V}_1(h))}
     + \frac{1}{2}C_{\text{coer}}\|e^{\varphi}\|^2_{L^{\infty}(J;V_2(h))} \\
     \leq&\,\|\mathbf{e}_t^u(\mathbf{0})\|_{0,\Omega}^2+\|e_t^{\varphi}(0)\|_{0,\Omega_R}^2
     +C_{\text{cont}}\|\mathbf{e}^u(\mathbf{0})\|^2_h+C_{\text{cont}}\|e^{\varphi}(0)\|^2_h\\
     &\quad + Ch^{2\min\{m,k\}}\left(\|\mathbf{u}\|^2_{L^{\infty}(J;\mathbf{H}^{1+m}(\Omega))}
     + T^2\|\mathbf{u}_t\|^2_{L^{\infty}(J;\mathbf{H}^{1+m}(\Omega))}+\|\mathbf{u}_{tt}\|_{L^{1}(J;\mathbf{H}^m(\Omega))}\right)\\
     &\qquad + Ch^{2\min\{m,k\}}\left(\|\varphi\|_{L^{\infty}(J;H^{1+m}(\Omega_R))}
   +T^2\|\varphi_t\|^2_{L^{\infty}(J; H^{1+m}(\Omega_R))}+\|\varphi_{tt}\|^2_{L^{1}(J;H^m(\Omega_R))}\right)
   \end{align*}
   with a constant that is independent of $T$ and the mesh size. This concludes the proof of Theorem~\ref{theorem3.1}.\qed

   Hence, with a standard projection approximation for the initial condition, Theorem~\ref{theorem3.1} yields an optimal convergence estimate in the (DG) energy norm
   \[
     \|\mathbf{e}^u_t\|_{L^{\infty}(J;L^2(\Omega))} + \|\mathbf{e}^u\|_{L^{\infty}(J;\mathbf{V}_1)} \leq Ch^{\min\{m,k\}},
   \]
   and
   \[
     \|e^\varphi_t\|_{L^{\infty}(J;L^2(\Omega_R))}+ \|e^\varphi\|_{L^{\infty}(J;V_2(h))} \leq C h^{\min\{m,k\}},
   \]
   with a constant $C$ that is independent of $h$.

\section{Numerical results}
 We will present  the numerical results to verify the theoretical error analysis, and the IPDG penalty parameters are chosen as follows: $\alpha=100, \beta=1$. The discretization of the FSI problem in space by the IPDG method leads to the linear second-order system of ordinary differential equation as below
\begin{equation}\label{eq:timedis}
  M\ddot{U}^h(t)+N\dot{U}^h(t)+AU^h(t)=\mathbf{f}^h(t),\qquad\qquad t\in[0,T]
\end{equation}
with the initial conditions
\begin{equation}\label{eq:timeinitial}
  MU^h(0)=U_0^h,\quad M\dot{U}^h(0)=\mathbf{v}_0^h,
\end{equation}
Here $M$, $N$ and $A$ are the known coefficient  matrices. We use the second order Newmark time stepping scheme~\cite{NMN1959} to discretize (\ref{eq:timedis}) in time domain, and $l$ denotes the time step size with $t_n=nl$. Then the Newmark method consists of finding $\{U^h_n\}_n$ to $U^h(t_n)$ such that
\begin{eqnarray}
  M\ddot{U}^h(0) &=& \mathbf{f}^h(0)-N\dot{U}^h(0)-AU^h(0)\\
  \nonumber
  (M+\gamma lN+\delta l^2A)\ddot{U}^h(n) &=& \mathbf{f}^h(n)-N[\dot{U}^h(n-1)+(1-\gamma)\delta l^2\ddot{U}^h(n-1)]-\\
  &&A[U^h(n-1)+l\dot{U}^h(n-1)+\frac{1-2\delta}{2}l^2\ddot{U}^h(n-1)] \\
  \dot{U}^h(n) &=& \dot{U}^h(n-1)+(1-\gamma)l\ddot{U}^h(n-1)+\gamma l\ddot{U}^h(n) \\
  \nonumber
  U^h(n) &=& U^h(n-1)+ l\dot{U}^h(n-1)+\frac{1-2\delta}{2}l^2\ddot{U}^h(n-1)+\\
  &&\delta l^2\ddot{U}^h(n)
\end{eqnarray}
for $n=1,\cdots,N-1$. Here, $\mathbf{f}_n:=\mathbf{f}(t_n)$, while $\delta\geq 0$ and $\gamma\geq1/2$ are free parameters that still can be chosen.
In our examples, we use the explicit second-order Newmark scheme, setting  $\gamma=1/2$ and $\delta=0$. In our accuracy tests, we consider the following two models with the parameters $c = 1$, $\rho_1 = 1$, $\rho_2=1$, $\lambda = 1$, $\mu = 1$. The computational domain $\Omega$ and $\Omega_R$ are approximated by uniform triangle elements and piecewise linear basis functions ($k=1$) are employed.
\paragraph{Example 1} Let $\Omega$ be a disk with radius $R_0 = 1$, a plane incident wave $\varphi^i=\cos(\bx\cdot\bd)\cos(t)$ is given, where the direction is $\bd=(1,0)$. Choosing the artificial boundary $\Gamma_R$ to be a circle with the radius  $R = 2$, sharing the same center of $\Gamma$,.

\par{}Let the final time $T=1$ and use the numerical solution $U^r$ with mesh size $h=0.0134$ as a reference solution. We set time step $l=h/20$ to ensure the stability of time discretization. The numerical errors $U^r-U^h$ and convergence order are presented in Table 1 for various mesh sizes. The numerical results confirm the expected rates of $k-$th order for the energy norm and $(k+1)-$th order for the $L^2$ norm.

\begin{table}[h]
\centering
\label{tab:E1}
\caption{Numerical errors when $T=1$ for Example 1.}
\begin{tabular}{p{1.5cm}p{4cm}p{2.5cm}p{3cm}p{2.5cm}}
\hline
h &Error in energy norm &Order &$L^2$ error &Order\\ \hline
0.415 &1.3071e+00 &- &6.0158e-01 &-\\
0.207 &6.9562e-01 &0.91 &2.1094e-01 &1.51\\
0.104 &3.0057e-01 &1.21 &5.8146e-02 &1.86\\
0.052 &1.3364e-01 &1.17 &1.4833e-02 &1.97  \\
0.026 &6.3287e-02 &1.08 &3.7408e-03 &1.99  \\ \hline
\end{tabular}
\end{table}

\paragraph{Example 2} We consider the FSI problem on a $L$-shaped domain $\Omega=(-1,1)^2\setminus [0,1)^2$. The incident wave is a point source at $(2,0)$:
      \[\varphi^i=\begin{cases}
      \sin(2\pi t)&\text{$0\leq t\leq0.5$},\\
       0&
      \text{$t\geq 0.5$}.
      \end{cases}\]

The artificial boundary $\Gamma_R$ is a circle centering at $(0,0)$ with radius $R=3$. Here, although solution $(\mathbf{u},\varphi)$ is continuous respect to time, it has a singular point $(0,0)$, that is $\mathbf{u}\in C^{\infty}(J;\mathbf{H}^{2/3}(\Omega))$, $\varphi\in C^{\infty}(J;H^{2/3}(\Omega_R))$.
\par{}Similarly, we let the numerical solution $U^r$ on the mesh with mesh size $h=0.0227$ be the reference solution and $l=h/20$, and compute the  errors $U^r-U^h$ in the energy norm and $L^2$ norm at $T=1$. In this case, the parameter $m$ in Theorem~\ref{theorem3.1} is $2/3$, so the theoretical convergence rate in energy norm should be $2/3$. The results in Table 2 validate this conclusion.

\begin{table}[tbp]
\centering
\label{tab:E2}
\caption{Numerical errors when $T=1$ for Example 2.}
\begin{tabular}{p{1.5cm}p{4cm}p{2.5cm}p{3cm}p{2.5cm}}
\hline
h &Error in energy norm  &Order &$L_2$ error &Order\\ \hline
0.726 &3.9474e-01 &- &4.0360e-02 &-\\
0.207 &2.5512e-01 &0.63 &1.4849e-02 &1.45\\
0.104 &1.6564e-01 &0.65 &5.5443e-03 &1.41\\
0.052 &1.0484e-01 &0.66 &2.1281e-03 &1.38  \\
0.026 &6.6348e-02 &0.66 &8.2907e-04 &1.36  \\ \hline
\end{tabular}
\end{table}

\section{Conclusion}
\paragraph{}
In this work, we have proposed an interior penalty discontinuous Galerkin method for solving the acoustic-elastic wave interaction problem. A low-order approximate  absorbing boundary condition has been used to deal with the acoustic waves in the unbounded domain. Essential analysis, including a priori error analysis, have been performed for the discontinuous Galerkin solution. We will consider the analysis of stable and conserved fully discrete schemes for the interaction problem, and the interaction problem of wave propagation in orthotropic porous elastic media in the future. Numerical schemes using more accurate coupling method of discontinuous Galerkin and boundary integral equation methods will also be envisioned in our future work.
\bigskip

\section*{Appendix.  Proof of Theorem~\ref{thm31}}
\counterwithout{equation}{section}
\counterwithout{theorem}{section}
\par{}This appendix is devoted to the proof of Theorem~\ref{thm31}.  As we mentioned in Section 3, such a proof is technically complicated.  Here, we prove this theorem by using \textit{a priori} estimate of an elliptic problem and the abstract inversion theorem of the Laplace transform. The following lemma (Treve~\cite{TF1975}, Theorem 43.1) is the analog of the Paley-Wiener-Schiwarz theorem for the Fourier transform of the distributions with compact support in the case of the Laplace transform.
      \begin{lemma}
      \label{lem32}
       {Let $\mathbf{h}(s)$ denote a holomorphic function in the half-plane $Re(s) > \alpha_0$, valued in the Banach space $E$. The following conditions are equivalent:\\
      (i) there is a distribution $T\in D'_+(E)$ whose Laplace transform is equal to $\mathbf{h}(s)$;\\
      (ii) there is a real $\alpha_1$, $\alpha_0\leq\alpha_1<\infty$, a constant $C > 0$, and an integer $m \geq 0$ such that, for all complex numbers $s$ with $Re(s) > \alpha_1$, it holds that
      \[
        \|\mathbf{h}(s)\|_E\leq C(1+s)^m,
      \]
      where $\|\cdot\|_E$ is the norm of Banach space $E$, and $D'_+$ is the space of distributions on the real line which vanish identically in the open negative half-line.}
      \end{lemma}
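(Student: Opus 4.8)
The plan is to prove this as the Paley--Wiener--Schwartz characterization for the vector-valued Laplace transform, handling the two implications separately, with the inverse (Bromwich) integral as the central tool. For the routine direction (i) $\Rightarrow$ (ii), suppose $T\in D'_+(E)$ has Laplace transform $\mathbf{h}$. Fix $\alpha_1>\alpha_0$; since $T$ is supported in $[0,\infty)$ and $e^{-\alpha_1 t}T$ is a tempered $E$-valued distribution, the structure/seminorm theorem for distributions supplies an integer $m$ and a constant $C$ such that $\|\langle T,\phi\rangle\|_E\leq C\sum_{j\leq m}\sup_t(1+t)^m\,|\phi^{(j)}(t)|$ for test functions carrying a fixed cutoff equal to $1$ near $\operatorname{supp}T$. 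Applying this estimate to $\phi(t)=e^{-st}\chi(t)$ with $Re(s)>\alpha_1$, each of the $j$ derivatives contributes a factor of $s$ while the polynomial weight is absorbed by $e^{-(Re(s)-\alpha_1)t}$, and one obtains $\|\mathbf{h}(s)\|_E\leq C(1+|s|)^m$, which is the asserted bound.

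The substantive work is the reconstruction (ii) $\Rightarrow$ (i). I would choose an integer $k>m+1$, so that on each vertical line $Re(s)=\sigma>\alpha_1$ the integrand $\mathbf{h}(s)/s^k$ is absolutely (Bochner) integrable, thanks to $\|\mathbf{h}(s)/s^k\|_E\leq C(1+|s|)^{m-k}$, and then define the $E$-valued function
\[
  g(t)=\frac{1}{2\pi i}\int_{\sigma-i\infty}^{\sigma+i\infty}\frac{\mathbf{h}(s)}{s^k}\,e^{st}\,ds.
\]
The proof then rests on four properties: (a) $g$ is continuous and independent of $\sigma>\alpha_1$, via holomorphy of $\mathbf{h}$ and the Banach-valued Cauchy theorem, justified by the uniform decay of the integrand; (b) $g(t)=0$ for $t<0$, obtained by sending $\sigma\to+\infty$ and using $|e^{st}|=e^{\sigma t}\to 0$ for $t<0$ against the uniform integrable bound; (c) $\|g(t)\|_E\leq C_\sigma e^{\sigma t}$, so that $g$ is of exponential type and $e^{-\sigma t}g$ is bounded; and (d) the Laplace transform of $g$ equals $\mathbf{h}(s)/s^k$. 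Setting $T:=D^k g$ (distributional derivative), property (b) yields $\operatorname{supp}T\subset[0,\infty)$, hence $T\in D'_+(E)$, while $\mathcal{L}(D^k g)(s)=s^k\,\mathcal{L}g(s)=\mathbf{h}(s)$ by (d), completing the construction.

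The main obstacle is this reconstruction direction, and within it two points demand care. First, establishing causality -- property (b) -- is exactly where holomorphy of $\mathbf{h}$ is genuinely used: the contour shift toward $+\infty$ must be controlled uniformly in the $E$-norm, and the polynomial bound in (ii) is precisely what makes this control possible. Second, every contour integral must be read as an $E$-valued Bochner integral, so the Banach-valued versions of Cauchy's theorem (for (a)) and of Fubini's theorem (for (d)) must be invoked; these are legitimate because $\mathbf{h}$ is holomorphic and the integrands are dominated by fixed integrable scalar functions, but the dominating estimates have to be assembled explicitly. The threshold $k>m+1$ is what renders all these interchanges -- absolute convergence of the Bromwich integral, the contour shift, and the Fubini step verifying $\mathcal{L}g=\mathbf{h}/s^k$ -- simultaneously valid, so I expect the bookkeeping of these domination bounds, rather than any conceptual difficulty, to be the most delicate part of the argument.
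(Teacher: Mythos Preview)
The paper does not supply its own proof of this lemma: it is quoted verbatim as a classical result (the Laplace-transform analogue of the Paley--Wiener--Schwartz theorem) and attributed to Tr\`eves, \emph{Basic Linear Partial Differential Equations}, Theorem~43.1. There is therefore nothing in the paper to compare your argument against.

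That said, your outline is the standard textbook proof and is correct in structure. The direction (i)$\Rightarrow$(ii) via the seminorm/structure theorem for tempered distributions, and the reconstruction (ii)$\Rightarrow$(i) via the regularized Bromwich integral $g(t)=\frac{1}{2\pi i}\int_{\sigma-i\infty}^{\sigma+i\infty} s^{-k}\mathbf{h}(s)e^{st}\,ds$ followed by $T=D^k g$, are exactly the ingredients Tr\`eves uses. Your identification of the two delicate points---the contour shift to $+\infty$ establishing causality, and the need to read all integrals as $E$-valued Bochner integrals with explicit domination---is accurate, and the threshold $k>m+1$ is indeed what makes all the interchanges legitimate. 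One minor remark: in the statement as written the bound is $C(1+s)^m$, which should be read as $C(1+|s|)^m$; you already make this correction implicitly in your argument.
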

      \paragraph{}Take the Laplace transform of \eqref{eq:rdfsi1}-\eqref{eq:rdfsi5} and denote $\tilde{\mathbf{u}}=\mathcal{L}(\mathbf{u})$, $\tilde{\varphi}=\mathcal{L}(\varphi)$, where $\mathcal{L}$ is the Laplace transform operator, we obtain:
      \begin{align}
      \rho_2 s^2\tilde{\mathbf{u}}-\text{div}(\sigma(\tilde{\mathbf{u}}))&= \mathbf{0}, && \text{~in~} \Omega,\label{eq:rdfsiL1}\\
      \frac{s^2}{c^2}\tilde{\varphi}-\Delta\tilde{\varphi} &= 0, && \text{~in~}\Omega_{R},\label{eq:rdfsiL2}\\
      \sigma(\tilde{\mathbf{u}})\mathbf{n}&=-(\rho_1s \tilde{\varphi}+\rho_1s\tilde{\varphi})\mathbf{n}, && \text{~on~}\Gamma,\label{eq:rdfsiL3}\\
      \frac{\partial \tilde{\varphi}}{\partial\mathbf{n}} &= -\frac{\partial \tilde{\varphi}^i}{\partial\mathbf{n}}-s\tilde{\mathbf{u}}\cdot\mathbf{n},&&\text{~on~} \Gamma,\label{eq:rdfsiL4}\\
      \frac{\partial \tilde{\varphi}}{\partial \mathbf{n}}&=-s\tilde{\varphi},&&\text{~on~} \Gamma_R.\label{eq:rdfsiL5}
      \end{align}

    \begin{lemma}
      \label{lem33}
      {Let $s=s_1+is_2$, $s_1>0$, $s_2\in \mathbb{R}$. For any $g\in L^2(\Omega_R)$, let $(\tilde{\mathbf{u}},\tilde{\varphi})$ be the solution of \eqref{eq:rdfsiL1}-\eqref{eq:rdfsiL5}. Then there exists a constant $C$ independent of $s$ such that:
      \begin{eqnarray}\label{eq:lemma1}
     \|\varepsilon(\tilde{\mathbf{u}})\|^2_{F(\Omega)}+\|\nabla\cdot \tilde{\mathbf{u}}\|^2_{0,\Omega}+\|s\tilde{\mathbf{u}}\|^2_{0,\Omega} \leq C\frac{(1+|s|)^2}{s^2_1}\left\|\tilde{\varphi}^i\right\|^2
      _{\frac{1}{2},\Gamma},\label{eq:lemma1.1}\\
      \|\nabla \tilde{\varphi}\|^2_{0,\Omega_R}+\|s\tilde{\varphi}\|^2_{0,\Omega_R} \leq C\frac{(1+|s|)^2}{s_1^2}\left\|\tilde{\varphi}^i\right\|^2
      _{\frac{1}{2},\Gamma}.\label{eq:lemma1.2}
      \end{eqnarray}}
      \end{lemma}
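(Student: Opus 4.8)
The plan is to convert the transformed transmission problem \eqref{eq:rdfsiL1}--\eqref{eq:rdfsiL5} into a single weighted energy identity, by testing the two equations against suitably $s$-scaled multiples of $\tilde{\mathbf{u}}$ and $\tilde\varphi$ chosen so that the two interface integrals over $\Gamma$ cancel once real parts are taken. Concretely, I would multiply \eqref{eq:rdfsiL1} by $\overline{s\tilde{\mathbf{u}}}$, integrate over $\Omega$, integrate by parts, and substitute \eqref{eq:rdfsiL3} for $\sigma(\tilde{\mathbf{u}})\mathbf{n}$ on $\Gamma$, which gives
\[
\rho_2|s|^2 s\,\|\tilde{\mathbf{u}}\|_{0,\Omega}^2+\bar s\,a(\tilde{\mathbf{u}},\tilde{\mathbf{u}})+\rho_1|s|^2\langle\tilde\varphi\,\mathbf{n},\tilde{\mathbf{u}}\rangle_\Gamma+\rho_1|s|^2\langle\tilde\varphi^i\mathbf{n},\tilde{\mathbf{u}}\rangle_\Gamma=0,
\]
where $a(\tilde{\mathbf{u}},\tilde{\mathbf{u}})=\lambda\|\nabla\cdot\tilde{\mathbf{u}}\|_{0,\Omega}^2+2\mu\|\varepsilon(\tilde{\mathbf{u}})\|_{F(\Omega)}^2\ge 0$. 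In parallel I would multiply \eqref{eq:rdfsiL2} by $\overline{\rho_1 s\tilde\varphi}$, integrate over $\Omega_R$, integrate by parts, and substitute \eqref{eq:rdfsiL4} on $\Gamma$ and \eqref{eq:rdfsiL5} on $\Gamma_R$; this produces a companion identity carrying the terms $\tfrac{\rho_1|s|^2 s}{c^2}\|\tilde\varphi\|_{0,\Omega_R}^2$, $\rho_1\bar s\|\nabla\tilde\varphi\|_{0,\Omega_R}^2$, $\rho_1|s|^2\|\tilde\varphi\|_{0,\Gamma_R}^2$, the coupling term $-\rho_1|s|^2\langle\tilde{\mathbf{u}}\cdot\mathbf{n},\tilde\varphi\rangle_\Gamma$, and the incident datum $-\rho_1\bar s\langle\partial_{\mathbf{n}}\tilde\varphi^i,\tilde\varphi\rangle_\Gamma$. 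Since $\langle\tilde{\mathbf{u}}\cdot\mathbf{n},\tilde\varphi\rangle_\Gamma$ is the complex conjugate of $\langle\tilde\varphi\,\mathbf{n},\tilde{\mathbf{u}}\rangle_\Gamma$, the two coupling terms have equal real parts, so adding the two identities and taking the real part eliminates them.

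After this step I obtain
\[
\rho_2 s_1\|s\tilde{\mathbf{u}}\|_{0,\Omega}^2+s_1\,a(\tilde{\mathbf{u}},\tilde{\mathbf{u}})+\frac{\rho_1 s_1}{c^2}\|s\tilde\varphi\|_{0,\Omega_R}^2+\rho_1 s_1\|\nabla\tilde\varphi\|_{0,\Omega_R}^2+\rho_1|s|^2\|\tilde\varphi\|_{0,\Gamma_R}^2=\mathrm{Re}(\mathcal R),
\]
with $\mathcal R=\rho_1\bar s\langle\partial_{\mathbf{n}}\tilde\varphi^i,\tilde\varphi\rangle_\Gamma-\rho_1|s|^2\langle\tilde\varphi^i\mathbf{n},\tilde{\mathbf{u}}\rangle_\Gamma$. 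Because $s_1=\mathrm{Re}(s)>0$, every term on the left is a nonnegative multiple of one of the quantities appearing in \eqref{eq:lemma1.1}--\eqref{eq:lemma1.2}; here I would invoke the Lamé conditions $3\lambda+2\mu>0$, $\mu>0$, which together with $\|\nabla\cdot\tilde{\mathbf{u}}\|_{0,\Omega}^2\le 2\|\varepsilon(\tilde{\mathbf{u}})\|_{F(\Omega)}^2$ give $a(\tilde{\mathbf{u}},\tilde{\mathbf{u}})\ge c_0\big(\|\varepsilon(\tilde{\mathbf{u}})\|_{F(\Omega)}^2+\|\nabla\cdot\tilde{\mathbf{u}}\|_{0,\Omega}^2\big)$ for some $c_0>0$. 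Hence the left-hand side dominates $c_1 s_1$ times the quantity $\mathcal N^2:=\|\varepsilon(\tilde{\mathbf{u}})\|_{F(\Omega)}^2+\|\nabla\cdot\tilde{\mathbf{u}}\|_{0,\Omega}^2+\|s\tilde{\mathbf{u}}\|_{0,\Omega}^2+\|\nabla\tilde\varphi\|_{0,\Omega_R}^2+\|s\tilde\varphi\|_{0,\Omega_R}^2$, and it suffices to bound $\mathcal N$.

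It remains to estimate $\mathrm{Re}(\mathcal R)$. I would bound $\langle\tilde\varphi^i\mathbf{n},\tilde{\mathbf{u}}\rangle_\Gamma$ through the $H^{1/2}(\Gamma)$--$H^{-1/2}(\Gamma)$ duality and the normal-trace inequality $\|\tilde{\mathbf{u}}\cdot\mathbf{n}\|_{-1/2,\Gamma}\le C\big(\|\tilde{\mathbf{u}}\|_{0,\Omega}+\|\nabla\cdot\tilde{\mathbf{u}}\|_{0,\Omega}\big)$ on $H(\mathrm{div};\Omega)$, and $\langle\partial_{\mathbf{n}}\tilde\varphi^i,\tilde\varphi\rangle_\Gamma$ through the ordinary trace inequality $\|\tilde\varphi\|_{1/2,\Gamma}\le C\|\tilde\varphi\|_{1,\Omega_R}$, the Neumann trace $\partial_{\mathbf{n}}\tilde\varphi^i$ being controlled by $\|\tilde\varphi^i\|_{1/2,\Gamma}$ via the Helmholtz resolvent equation that $\tilde\varphi^i$ itself satisfies in $\Omega_R$. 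Rewriting $\|\tilde{\mathbf{u}}\|_{0,\Omega}=\|s\tilde{\mathbf{u}}\|_{0,\Omega}/|s|$ and $\|\tilde\varphi\|_{0,\Omega_R}=\|s\tilde\varphi\|_{0,\Omega_R}/|s|$, all factors on the right are expressed through $\mathcal N$, yielding $|\mathrm{Re}(\mathcal R)|\le C(1+|s|)^2\|\tilde\varphi^i\|_{1/2,\Gamma}\,\mathcal N$. Combining with $c_1 s_1\mathcal N^2\le|\mathrm{Re}(\mathcal R)|$, dividing by $\mathcal N$ and then by $s_1$, and squaring gives $\mathcal N^2\le C\frac{(1+|s|)^2}{s_1^2}\|\tilde\varphi^i\|_{1/2,\Gamma}^2$, which contains both \eqref{eq:lemma1.1} and \eqref{eq:lemma1.2}.

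The step I expect to be the main obstacle is the bookkeeping of the powers of $s$ and of the factor $1/s_1$ in the last paragraph: the incident datum enters the transformed traction condition \eqref{eq:rdfsiL3} with a prefactor $s$ and is then tested against another factor $s$, so the boundary forcing is $O(|s|^2)$, and one must carefully exploit the $s$-weighting of the left-hand side norms together with the $H(\mathrm{div})$ normal-trace bound (rather than a crude $H^1$ trace) to recover exactly the stated growth rate. A secondary technical point is the reduction of the Neumann datum $\partial_{\mathbf{n}}\tilde\varphi^i$ to the $H^{1/2}(\Gamma)$-norm of $\tilde\varphi^i$, which relies on the fact that the incident field solves the same wave equation and hence, after the Laplace transform, the Helmholtz resolvent equation in $\Omega_R$.
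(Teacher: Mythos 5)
Your overall strategy coincides with the paper's own proof. The paper forms exactly the same combined sesquilinear identity (it tests the elastic equation against $\tfrac{1}{\rho_1 s}\overline{\mathbf v}$ and the acoustic one against $\tfrac1s\overline{\phi}$, so your identity is the paper's multiplied through by $\rho_1|s|^2$), cancels the two $\Gamma$-coupling terms in the real part in the same way, bounds the resulting quadratic form from below by a constant times $s_1|s|^{-2}$ times the same quantity $\mathcal N^2$, and estimates the incident-wave functional by $H^{1/2}$--$H^{-1/2}$ duality and trace inequalities before dividing by $\mathcal N$ and by $s_1$. So there is no methodological divergence to report; your treatment of the Lam\'e positivity and of the Neumann datum of $\tilde\varphi^i$ is, if anything, more explicit than the paper's.

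The gap is in your final power count, at precisely the step you flagged as the expected obstacle. From $c_1 s_1\mathcal N^2\le|\mathrm{Re}(\mathcal R)|\le C(1+|s|)^2\|\tilde\varphi^i\|_{\frac12,\Gamma}\,\mathcal N$ you can only conclude $\mathcal N\le C(1+|s|)^2 s_1^{-1}\|\tilde\varphi^i\|_{\frac12,\Gamma}$, hence $\mathcal N^2\le C(1+|s|)^4 s_1^{-2}\|\tilde\varphi^i\|^2_{\frac12,\Gamma}$ --- two powers of $(1+|s|)$ more than \eqref{eq:lemma1.1}--\eqref{eq:lemma1.2}; your concluding display does not follow from your stated intermediate bound. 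To obtain the lemma as written you would need $|\mathcal R|\le C(1+|s|)\|\tilde\varphi^i\|_{\frac12,\Gamma}\mathcal N$, and your estimates do not deliver that: the incident traction term is $\rho_1|s|^2\langle\tilde\varphi^i\mathbf n,\tilde{\mathbf u}\rangle_\Gamma=s\langle\tilde\varphi^i\mathbf n,s\tilde{\mathbf u}\rangle_\Gamma$, and the $H(\mathrm{div})$ normal-trace bound gives $\|s\tilde{\mathbf u}\cdot\mathbf n\|_{-\frac12,\Gamma}\le C\bigl(\|s\tilde{\mathbf u}\|_{0,\Omega}+|s|\,\|\nabla\cdot\tilde{\mathbf u}\|_{0,\Omega}\bigr)$; since $\|\nabla\cdot\tilde{\mathbf u}\|_{0,\Omega}$ enters $\mathcal N$ with no $s$-weight, a factor $|s|(1+|s|)$ survives. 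In fairness, the paper's own proof is loose at the same spot --- in \eqref{eq:THVF7} it passes to $\|s\tilde{\mathbf u}\|_{1,\Omega}$ and then silently replaces it by the unweighted quantities in \eqref{THVF8} --- so you have reproduced the argument, obstacle included, but not closed it. Either the duality estimate must be sharpened to a single power of $(1+|s|)$, or the exponent in the conclusion must be weakened to $(1+|s|)^4$, which would still be a polynomial bound and hence still suffice for the Paley--Wiener argument of Lemma~\ref{lem32} used afterwards.
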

     \begin{proof}
       Multiplying~\eqref{eq:rdfsiL1} and~\eqref{eq:rdfsiL2} by test functions $\mathbf{v}\in \mathbf{H}^1(\Omega)$ and  $\phi\in H^1(\Omega_R)$ respectively, then using  integration by parts, the symmetry of the stress tensor, and the transmission and boundary conditions, we get
      \begin{align}
      \nonumber
      & \int_{\Omega}\left(\rho_2s^2\tilde{\mathbf{u}}\cdot\mathbf{v}+\lambda(\nabla\cdot\tilde{\mathbf{u}})(\nabla\cdot \mathbf{v})+2\mu(\varepsilon(\tilde{\mathbf{u}}):\varepsilon(\mathbf{v}))\right)\,d\mathbf{x} \\
      &\qquad\qquad\qquad\qquad +\int_{\Gamma}\rho_1s(\tilde{\varphi}\mathbf{n})\cdot\mathbf{v}\,dS =-\int_{\Gamma}\rho_1s\left(\tilde{\varphi}^i\mathbf{n}\right)\cdot\mathbf{v}\,dS,\label{eq:THVF0.1}\\
      \nonumber
      & \int_{\Omega_R}\left(\frac{s}{c^2}\tilde{\varphi}\cdot\phi+\frac{1}{s}\nabla\tilde{\varphi}\cdot\nabla\phi\right)\,d\mathbf{x} - \int_{\Gamma}(\tilde{\mathbf{u}}\cdot\mathbf{n})\phi\,dS \\
      &\qquad\qquad\qquad\qquad + \int_{\Gamma_R}\tilde{\varphi}\phi\,dS  = \int_{\Gamma}\frac{1}{s}\left(\frac{\partial\tilde{\varphi}^i}{\partial\mathbf{n}}\right)\cdot\phi\,dS\label{eq:THVF0.2}.
      \end{align}
      Multiply~\eqref{eq:THVF0.1} with $\frac{1}{\rho_1s}$ and add it to \eqref{eq:THVF0.2}, we get:
      \begin{eqnarray}\label{eq:THVF1}
      \tilde{a}(\tilde{\mathbf{u}},\tilde{\varphi};\mathbf{v},\phi)=\tilde{L}(\mathbf{v},\phi)
      \end{eqnarray}
      where
      \begin{align}
      \nonumber
      \tilde{a}(\tilde{\mathbf{u}},\tilde{\varphi};\mathbf{v},\phi)&:=\int_{\Omega}\left( \frac{\rho_2s}{\rho_1}\tilde{\mathbf{u}}\cdot\mathbf{v}+\frac{\lambda}{\rho_1s}(\nabla\cdot\tilde{\mathbf{u}})(\nabla\cdot \mathbf{v})+\frac{2\mu}{\rho_1s}(\varepsilon(\tilde{\mathbf{u}}):\varepsilon(\mathbf{v}))\right)\,d\mathbf{x}\\
      \nonumber
      &\quad + \int_{\Omega_R} \left(\frac{s}{c^2}\tilde{\varphi}\cdot\phi+\frac{1}{s}\nabla\tilde{\varphi}\cdot\nabla\phi\right)\,d\mathbf{x} +\int_{\Gamma}(\tilde{\varphi}\mathbf{n})\cdot\mathbf{v}\,dS\\
      &\quad - \int_{\Gamma}(\tilde{\mathbf{u}}\cdot\mathbf{n})\phi\,dS+\int_{\Gamma_R}\tilde{\varphi}\phi \,dS,\label{eq:THVF2.1}\\
      \tilde{L}(\mathbf{v},\phi)&:= \int_{\Gamma}\frac{1}{s}\left(\frac{\partial\tilde{\varphi}^i}{\partial\mathbf{n}}\right)\cdot\phi\,dS-\int_{\Gamma}(\tilde{\varphi}^i\mathbf{n})\cdot\mathbf{v}\,dS.\label{eq:THVF2.2}
      \end{align}
      Using Cauchy-Schwarz inequality and the trace theorem, we have:
      \begin{align*}
      \left|\tilde{a}(\tilde{\mathbf{u}},\tilde{\varphi};\mathbf{v},\phi)\right| &\leq \left|\frac{\rho_2s}{\rho_1}\right|\|\tilde{\mathbf{u}}\|_{0,\Omega}\|\mathbf{v}\|_{0,\Omega} +\left|\frac{\lambda}{\rho_1s}\right|\|\nabla\cdot\tilde{\mathbf{u}}\|_{F(\Omega)}\|\nabla\cdot \mathbf{v}\|_{F(\Omega)}\\
      &\quad + \left|\frac{2\mu}{\rho_1s}\right|\|\varepsilon(\tilde{\mathbf{u}})\|_{F(\Omega)}\|\varepsilon(\mathbf{v})\|_{F(\Omega)} +\left|\frac{s}{c^2}\right|\|\tilde{\varphi}\|_{0,\Omega_R}\|\phi\|_{0,\Omega_R}\\
      &\quad + \left|\frac{1}{s}\right|\|\nabla\tilde{\varphi}\|_{0,\Omega_R}\|\nabla\phi\|_{0,\Omega_R} +\|\tilde{\varphi}\mathbf{n}\|_{-\frac{1}{2},\Gamma}\|\mathbf{v}\|_{\frac{1}{2},\Gamma}\\
      &\quad +\|\tilde{\mathbf{u}}\cdot\mathbf{n}\|_{-\frac{1}{2},\Gamma}\|\phi\|_{\frac{1}{2},\Gamma} +\|\tilde{\varphi}\|_{-\frac{1}{2},\Gamma_R}\|\phi\|_{\frac{1}{2},\Gamma_R}\\
      &\leq C_1\left(\|\tilde{\mathbf{u}}\|_{1,\Omega}\|\mathbf{v}\|_{1,\Omega}+\|\tilde{\varphi}\|_{1,\Omega_R}\|\phi\|_{1,\Omega_R} +\|\tilde{\varphi}\|_{\frac{1}{2},\Gamma}\|\mathbf{v}\|_{\frac{1}{2},\Gamma}\right.\\
      &\qquad +\left.\|\tilde{\mathbf{u}}\|_{\frac{1}{2},\Gamma}\|\phi\|_{\frac{1}{2},\Gamma}+\|\tilde{\varphi}\|_{\frac{1}{2},\Gamma_R} \|\phi\|_{\frac{1}{2},\Gamma_R}\right)\\
      &\leq C_1\left(\|\tilde{\mathbf{u}}\|_{1,\Omega}\|\mathbf{v}\|_{1,\Omega}+\|\tilde{\varphi}\|_{1,\Omega_R}\|\phi\|_{1,\Omega_R} +\|\tilde{\varphi}\|_{1,\Omega_R}\|\mathbf{v}\|_{1,\Omega}\right.\\
      &\qquad +\left.\|\tilde{\mathbf{u}}\|_{1,\Omega}\|\phi\|_{1,\Omega_R}+\|\tilde{\varphi}\|_{1,\Omega_R}\|\phi\|_{1,\Omega_R}\right),
      \end{align*}
      where $C_1$ is a constant that independent of $\tilde{\mathbf{u}},\tilde{\varphi},\mathbf{v},\phi$, hence the sesquilinear form $\tilde{a}(\tilde{\mathbf{u}},\tilde{\varphi};\mathbf{v},\phi)$ is bounded.
      \par{}Letting $(\mathbf{v},\phi)=(\tilde{\mathbf{u}},\tilde{\varphi})$ in \eqref{eq:THVF2.1} yields:
      \begin{align}
      \nonumber
      \tilde{a}(\tilde{\mathbf{u}},\tilde{\varphi};\tilde{\mathbf{u}},\tilde{\varphi})& = \int_{\Omega}\left( \frac{\rho_2s}{\rho_1}|\tilde{\mathbf{u}}|^2+\frac{\lambda}{\rho_1s}|\nabla\cdot\tilde{\mathbf{u}}|^2 +\frac{2\mu}{\rho_1s}(\varepsilon(\tilde{\mathbf{u}}):\varepsilon(\tilde{\mathbf{u}}))\right)\,d\mathbf{x}\\
      &\qquad + \int_{\Omega_R} \left(\frac{s}{c^2}|\tilde{\varphi}|^2+\frac{1}{s}|\nabla\tilde{\varphi}|^2\right)\,d\mathbf{x}
      +\int_{\Gamma_R}|\tilde{\varphi}|^2\,dS.\label{eq:THVF4.1}
      \end{align}
      Taking real part of \eqref{eq:THVF4.1}, we have
      \begin{align}
       \nonumber
       &\,\text{Re}\left(\tilde{a}(\tilde{\mathbf{u}},\tilde{\varphi};\tilde{\mathbf{u}},\tilde{\varphi})\right) \\
       \nonumber
      = &\quad \frac{\rho_2s_1}{\rho_1}\|\tilde{\mathbf{u}}\|_{0,\Omega}^2+\frac{\lambda s_1}{\rho_1|s|^2}\|\nabla\cdot\tilde{\mathbf{u}}\|_{0,\Omega}^2 +\frac{2\mu s_1}{\rho_1|s|^2}\|\varepsilon(\tilde{\mathbf{u}})\|_{F(\Omega)}^2 \\ &\qquad\qquad +\frac{s_1}{c^2}\|\tilde{\varphi}\|_{0,\Omega_R}^2+\frac{s_1}{|s|^2}\|\nabla\tilde{\varphi}\|_{0,\Omega_R}^2
      +\int_{\Gamma_R}|\tilde{\varphi}|^2 \,ds\label{eq:THVF5}\\
       \nonumber
      \geq &\quad C_2\frac{s_1}{|s|^2}\left(|s|^2\|\tilde{\mathbf{u}}\|_{0,\Omega}^2+\|\nabla\cdot\tilde{\mathbf{u}}\|_{0,\Omega}^2 +\|\varepsilon(\tilde{\mathbf{u}})\|_{F(\Omega)}^2+|s|^2\|\tilde{\varphi}\|_{0,\Omega_R}^2
      +\|\nabla\tilde{\varphi}\|_{0,\Omega_R}^2\right).
      \end{align}
       where $C_2$ is a constant that independent of $s$.  According to Lax-Milgram lemma, there exists a unique solution $(\tilde{\mathbf{u}},\tilde{\varphi})\in \mathbf{H}^1(\Omega)\times H^1(\Omega_R),$ satisfies the following problem
      \begin{equation}\label{eq:THVF6}
        \tilde{a}(\tilde{\mathbf{u}},\tilde{\varphi};\mathbf{v},\phi)=\tilde{L}(\mathbf{v},\phi),\qquad \forall\,(\mathbf{v},\phi)\in \mathbf{H}^1(\Omega)\times H^1(\Omega_R)
      \end{equation}
      \par{}Based on equation~\eqref{eq:THVF6} and Cauchy-Schwarz inequality, there exists a constant $C_3$ such that
      \begin{align}
       \nonumber
      |\tilde{a}(\tilde{\mathbf{u}},\tilde{\varphi};\tilde{\mathbf{u}},\tilde{\varphi})|=|L(\tilde{\mathbf{u}},\tilde{\varphi})|&\leq \frac{1}{|s|^2}\left\|\nabla_{\mathbf{n}}\tilde{\varphi}^i\right\|_{-\frac{1}{2},\Gamma}\|s\tilde{\varphi}\|_{\frac{1}{2},\Gamma} +\frac{1}{|s|^2}\left\|s\tilde{\varphi}^i\cdot\mathbf{n}\right\|_{-\frac{1}{2},\Gamma}\|s\tilde{\mathbf{u}}\|_{\frac{1}{2},\Gamma}\\
      \nonumber
      &\leq C_3\left(\frac{1}{|s|^2}\left\|\tilde{\varphi}^i\|_{\frac{1}{2},\Gamma}\right\|s\tilde{\varphi}\|_{1,\Omega_R} +\frac{1}{|s|^2}\left\|\tilde{\varphi}^i\right\|_{\frac{1}{2},\Gamma}\left\|s^2\tilde{\mathbf{u}}\right\|_{1,\Omega}\right)\\
      \nonumber
      &\leq C_3\frac{1+|s|}{|s|^2}\left\|\tilde{\varphi}^i\right\|_{\frac{1}{2},\Gamma}\left(\|s\tilde{\varphi}\|^2_{0,\Omega_R}+ \|\nabla\tilde{\varphi}\|^2_{0,\Omega_R}\right.\\
      &\qquad\left.+\|s\tilde{\mathbf{u}}\|^2_{1,\Omega}+\|\nabla\cdot\tilde{\mathbf{u}}\|_{0,\Omega}^2 +\|\varepsilon(\tilde{\mathbf{u}})\|_{F(\Omega)}^2\right)^{1/2}\label{eq:THVF7}.
      \end{align}
      Combining~\eqref{eq:THVF5} and~\eqref{eq:THVF7}:
      \begin{eqnarray}\label{THVF8}
      \nonumber
      &\frac{s_1}{|s|^2}C_2\left(|s|^2\|\tilde{\mathbf{u}}\|_{0,\Omega}^2+\|\nabla\cdot\tilde{\mathbf{u}}\|_{0,\Omega}^2 +\|\varepsilon(\tilde{\mathbf{u}})\|_{F(\Omega)}^2+|s|^2\|\tilde{\varphi}\|_{0,\Omega_R}^2+\|\nabla\tilde{\varphi}\|_{0,\Omega_R}^2\right)\\
      &\leq C_3\frac{1+|s|}{|s|^2}\|\tilde{\varphi}^i\|_{\frac{1}{2},\Gamma}\left(\|s\tilde{\varphi}\|^2_{0,\Omega_R}+ \|\nabla\tilde{\varphi}\|^2_{0,\Omega_R} +\|s\tilde{\mathbf{u}}\|^2_{0,\Omega}+\|\nabla\cdot\tilde{\mathbf{u}}\|_{0,\Omega}^2 +\|\varepsilon(\tilde{\mathbf{u}})\|_{F(\Omega)}^2\right)^{1/2}.
      \end{eqnarray}
      Letting $C=\left(\frac{C_3}{C_2}\right)^2$, we obtain \eqref{eq:lemma1.1} and \eqref{eq:lemma1.2}.        \end{proof}


Now we are ready to prove Theorem~\ref{thm31}.
      \paragraph{Proof of Theorem~\ref{thm31}:} Using Lemma~\ref{lem33}, $\tilde{\mathbf{u}},\tilde{\varphi}$ are the holomorphic functions of $s$ on the half plane $s_1>\gamma>0$. Hence the inverse Laplace transform of $\tilde{\mathbf{u}}$ and $\tilde{\varphi}$ exist i.e., the problem~\eqref{eq:rdfsi1}-\eqref{eq:rdfsi6} has a unique solution $(\mathbf{u},\varphi)=(\mathcal{L}^{-1}(\tilde{\mathbf{u}}),\mathcal{L}^{-1}(\tilde{\varphi}))$.
      \par{} For $\varphi$,we have:
      \begin{eqnarray}
      \nonumber
      \int_0^{T}\left(\|\nabla\varphi\|_{0,\Omega_R}^2+\|\varphi_t\|_{0,\Omega_R}^2\right)dt &\leq& \int_0^{T}e^{-2s_1(t-T)}\left(\|\nabla\varphi\|_{0,\Omega_R}^2+\|\varphi_t\|_{0,\Omega_R}^2\right)\,dt \\
      \nonumber
      &=& e^{2s_1 T}\int_0^{T}e^{-2s_1t}\left(\|\nabla\varphi\|_{0,\Omega_R}^2+\|\varphi_t\|_{0,\Omega_R}^2\right)\,dt \\
      &\leq& C \int_0^{\infty}e^{-2s_1t}\left(\|\nabla\varphi\|_{0,\Omega_R}^2+\|\varphi_t\|_{0,\Omega_R}^2\right)\,dt
      \end{eqnarray}
      Similarly, for $\mathbf{u}$, we have:
      \begin{eqnarray}
      \nonumber
    &\quad& \int_0^{T}\left(\|\nabla\cdot\mathbf{u}\|_{0,\Omega}^2+\|\varepsilon(\mathbf{u})\|_{F(\Omega)}^2+\|\mathbf{u}_t\|_{0,\Omega)}^2\right)\,dt \\
    &\leq & C \int_0^{\infty}e^{-2s_1t}\left(\left\|\nabla\cdot\mathbf{u}\right\|_{0,\Omega}^2+\|\varepsilon(\mathbf{u})\|_{F(\Omega)}^2+\|\mathbf{u}_t\|_{0,\Omega}^2\right)\,dt.
      \end{eqnarray}

      \par{}From estimate~\eqref{eq:lemma1.2} and the trace theorem, we have:
      \begin{eqnarray}\label{eq:theorem1}
      \|\nabla\cdot \tilde{\mathbf{u}}\|^2_{0,\Omega}+\|\varepsilon(\tilde{\mathbf{u}})\|^2_{F(\Omega)}+\|s\tilde{\mathbf{u}}\|^2_{0,\Omega} \leq C\frac{(1+|s|)^2}{s_1^2}\left\|\tilde{\varphi}^i\right\|^2
      _{\frac{1}{2},\Gamma}\leq C\frac{(1+|s|)^2}{s_1^2}\left\|\tilde{\varphi}^i\right\|^2
      _{1,\Omega_R},\label{eq:theorem1.1}\\
      \|\nabla \tilde{\varphi}\|^2_{0,\Omega}+\|s\tilde{\varphi}\|^2_{0,\Omega} \leq C\frac{(1+|s|)^2}{s_1^2}\left\|\tilde{\varphi}^i\right\|^2
      _{\frac{1}{2},\Gamma}\leq C\frac{(1+|s|)^2}{s_1^2}\left\|\tilde{\varphi}^i\right\|^2
      _{1,\Omega_R}.\label{eq:theorem1.2}
      \end{eqnarray}
      Using Parseval identity (see~\cite{Cohen:2007}) and estimate~\eqref{eq:theorem1.1}:
      \begin{eqnarray}\label{eq:theorem2}
      \nonumber
      &~&\int_0^{\infty}e^{-2s_1t}\left(\|\nabla\cdot\mathbf{u}\|_{0,\Omega}^2+\|\varepsilon(\mathbf{u})\|_{F(\Omega)}^2 +\|\mathbf{u}_t\|_{0,\Omega}^2\right)\,dt\\
      \nonumber
      &=&\frac{1}{2\pi}\int_{-\infty}^{\infty}\left(\|\nabla\cdot\tilde{\mathbf{u}}\|_{0,\Omega}^2+\|\varepsilon(\tilde{\mathbf{u}})\|_{F(\Omega)}^2 +\|s\tilde{\mathbf{u}}\|^2_{0,\Omega}\right)\,ds_2\\
      \nonumber
      &\leq& C s_1^{-2}\int_{-\infty}^{\infty}\left(\left\|\tilde{\varphi}^i\right\|^2_{1,\Omega_R}+\left\|s\tilde{\varphi}^i\right\|^2_{1,\Omega_R}\right)\,ds_2\\
      \nonumber
      &=& C s_1^{-2}\int_{-\infty}^{\infty}\left(\left\|\mathcal{L}(\varphi^i)\right\|^2_{1,\Omega_R}+\left\|\mathcal{L}(\varphi_t^i)\right\|^2_{1,\Omega_R}\right)\,ds_2\\
      &\leq& C s_1^{-2}\int_{0}^{\infty}e^{-2s_1t}\left(\left\|\varphi^i\right\|^2_{1,\Omega_R}+\left\|\varphi_t^i\right\|^2_{1,\Omega_R}\right)\,dt,
      \end{eqnarray}
      which shows that
      \[
        \mathbf{u}(\mathbf{x},t)\in L^2\left(J;\mathbf{H^1}(\Omega)\right)\cap H^1\left(J;\mathbf{L^2}(\Omega)\right).
      \]
      Similarly, we have
      \begin{eqnarray}\label{eq:theorem3}
        \int_0^{\infty}e^{-2s_1t}\left(\|\nabla\varphi\|_{0,\Omega_R}^2+\|\varphi_t\|_{0,\Omega_R}^2\right)\,dt
        \leq C s_1^{-2}\int_{0}^{\infty}e^{-2s_1t}\left(\left\|\varphi^i\right\|^2_{1,\Omega_R}+\left\|\varphi_t^i\right\|^2_{1,\Omega_R}\right)\,dt.
      \end{eqnarray}
      It follows from \eqref{eq:theorem3} that
      \[
        \varphi(x,t)\in L^2\left(J;H^1(\Omega_R)\right)\cap H^1\left(J;L^2(\Omega_R)\right).
      \]
      \par{}Next we prove the stability, which also helps establishing numerical stability of the IPDG scheme in the following section.  For any $0<t<T$, consider the energy function:
      \begin{equation}
        \label{eq:stable1}
            E(t) = E_1(t) + E_2(t)
      \end{equation}
      where
      \[
        E_1(t) = \left\|\frac{\sqrt{\rho_2}}{\sqrt{\rho_1}}\mathbf{u}_t\right\|^2_{0,\Omega}+\left\|\frac{\sqrt{\lambda}}{\sqrt{\rho_1}}\nabla\cdot\mathbf{u}\right\|^2_{0,\Omega} +\left\|\frac{\sqrt{2\mu}}{\sqrt{\rho_1}}\varepsilon(\mathbf{u})\right\|^2_{F(\Omega)}
      \]
      and
      \[
         E_2(t) = \left\|\frac{1}{c}\varphi_t\right\|^2_{0,\Omega_R}+\left\|\nabla\varphi\right\|^2_{0,\Omega_R}.
       \]
      Obviously,
      \begin{equation}\label{eq:stable2}
        E(t)-E(0)=\int_0^tE'(\tau)d\tau=\int_0^t\left(E_1'(\tau)+E_2'(\tau)\right)\,d\tau.
      \end{equation}
      By using the integration by parts and~\eqref{eq:rdfsi1},\eqref{eq:rdfsi3},\eqref{eq:rdfsi6} that
      \begin{eqnarray}\label{eq:stable3}
       \nonumber
       \int_0^tE_1'(\tau)\,d\tau &=& 2\int_0^t\int_\Omega\left(\frac{\rho_2}{\rho_1}\mathbf{u}_{tt}\cdot\mathbf{u}_t +\frac{\lambda}{\rho_1}(\nabla\cdot\mathbf{u}_t)(\nabla\cdot\mathbf{u})+\frac{2\mu}{\rho_1}\varepsilon(\mathbf{u}_t):\varepsilon(\mathbf{u})\right)\,d\mathbf{x}d\tau \\
        \nonumber
        &=&2\int_0^t\int_\Omega\left(\frac{1}{\rho_1}\nabla\cdot\sigma(\mathbf{u})\cdot\mathbf{u}_t +\frac{\lambda}{\rho_1}(\nabla\cdot\mathbf{u}_t)(\nabla\cdot\mathbf{u})+\frac{2\mu}{\rho_1}\varepsilon(\mathbf{u}_t):\varepsilon(\mathbf{u})\right)\,d\mathbf{x}d\tau \\
        \nonumber
        &=&2\int_0^t\int_\Omega\left(-\frac{1}{\rho_1}[\lambda(\nabla\cdot\mathbf{u})(\nabla\cdot\mathbf{u}_t)+2\mu\varepsilon(\mathbf{u}):\varepsilon(\mathbf{u}_t) ]+\frac{\lambda}{\rho_1}(\nabla\cdot\mathbf{u}_t)(\nabla\cdot\mathbf{u})\right.\\
        \nonumber
        &\,&\qquad\left.+\frac{2\mu}{\rho_1}\varepsilon(\mathbf{u}_t):\varepsilon(\mathbf{u})\right)\,d\mathbf{x}d\tau +2\int_0^t\int_{\Gamma}\frac{1}{\rho_1}\sigma(\mathbf{u})\cdot\mathbf{n}\cdot\mathbf{u}_t \,d\mathbf{x}d\tau\\
        &=&-2\int_0^t\left(\langle\varphi_t\mathbf{n},\mathbf{u}_t\rangle_{\Gamma}+\langle\varphi_t^i\mathbf{n},\mathbf{u}_t\rangle_{\Gamma}\right)\,d\tau.
      \end{eqnarray}
      Similarly, follows from the integration by parts and~\eqref{eq:rdfsi1},\eqref{eq:rdfsi3},\eqref{eq:rdfsi6}, we have
      \begin{eqnarray}\label{eq:stable4}
       \nonumber
       \int_0^tE_2'(\tau)\,d\tau &=& 2\int_0^t\int_\Omega\left(\frac{1}{c^2}\varphi_{tt}\varphi_t +\nabla(\varphi_t)\nabla\varphi\right)\,dxd\tau \\
        \nonumber
        &=&2\int_0^t\int_\Omega\left(\Delta\varphi\varphi_t +\nabla(\varphi_t)\nabla\varphi\right)\,dxd\tau \\
        \nonumber
        &=&2\int_0^t\int_\Omega\left(-\nabla\varphi\nabla(\varphi_t)+\nabla(\varphi_t)\nabla\varphi\right)\,dxd\tau \\
        \nonumber
        &\,&\qquad +2\int_0^t\int_{\Gamma_R}(\nabla\varphi\mathbf{n})\varphi_t \,dxd\tau
        -2\int_0^t\int_{\Gamma}(\nabla\varphi\mathbf{n})\varphi_t \,dxd\tau\\
        &=&2\int_0^t\left(\langle-\varphi_t,\varphi_t\rangle_{\Gamma_R} \,d\tau+2\int_0^t\langle\mathbf{u}_t\cdot\mathbf{n},\varphi_t\rangle_{\Gamma} +\langle\nabla\varphi^i\mathbf{n},\varphi_t\rangle_{\Gamma}\right)\,d\tau.
      \end{eqnarray}
      Since $E(0)=0$, by combining equations~\eqref{eq:stable2}-\eqref{eq:stable4} and using the trace theorem, we have:
      \begin{eqnarray}\label{eq:stable5}
       \nonumber
       E(t)&=&2\int_0^t\left(\langle-\varphi_t,\varphi_t\rangle_{\Gamma_R}+\langle\nabla\varphi^i\mathbf{n},\varphi_t\rangle_{\Gamma} -\langle\varphi_t^i\mathbf{n},\mathbf{u}_t\rangle_{\Gamma}\right)\,d\tau\\
        \nonumber
        &\leq&2\int_0^t\left(\langle\nabla\varphi^i\mathbf{n},\varphi_t\rangle_{\Gamma} -\langle\varphi^i_t\mathbf{n},\mathbf{u}_t\rangle_{\Gamma}\right)\,d\tau
        \leq 2\int_0^t \left(\|\nabla\varphi^i\|_{-\frac{1}{2},\Gamma}\|\varphi_t\|_{\frac{1}{2},\Gamma}+\|\varphi_t^i\|_{-\frac{1}{2},\Gamma}\|\mathbf{u}_t\|_{\frac{1}{2},\Gamma}\right)\,d\tau\\
        \nonumber
        &\leq&2\int_0^t \left(\|\nabla\varphi^i\|_{-\frac{1}{2},\Gamma}\|\varphi_t\|_{1,\Omega_R}+\|\varphi_t^i\|_{-\frac{1}{2},\Gamma}\|\mathbf{u}_t\|_{1,\Omega}\right)\,d\tau\\
        &\leq&C\left( \|\nabla\varphi^i\|_{L^1(J;H^{-1/2}(\Gamma))}\max_{t\in[0,T]}\|\varphi_t\|_{1,\Omega_R}+\|\varphi_t^i\|_{L^1(J;H^{-1/2}(\Gamma))}\max_{t\in[0,T]}\|\mathbf{u}_t\|_{1,\Omega} \right).
      \end{eqnarray}
      Therefore, using Young's inequality, we obtain:
      \begin{eqnarray}\label{eq:stable6}
       \nonumber
       &\,&\left(\|\mathbf{u}_t\|^2_{0,\Omega}+\|\nabla\cdot\mathbf{u}\|^2_{0,\Omega} +\|\varepsilon(\mathbf{u})\|^2_{F(\Omega)}\right)+\left(\|\nabla\varphi\|^2_{0,\Omega_R} +\|\varphi_t\|^2_{0,\Omega_R}\right) \leq C E(t) \\
        \nonumber
        &\leq& C\left(\epsilon\max_{t\in[0,T]}\left(\|\mathbf{u}_t\|^2_{0,\Omega}+\|\nabla\cdot\mathbf{u}_t\|^2_{0,\Omega} +\|\varepsilon(\mathbf{u}_t)\|^2_{F(\Omega)}\right)+\epsilon\max_{t\in[0,T]}\left(\|\nabla\varphi_t\|^2_{0,\Omega_R} +\|\varphi_t\|^2_{0,\Omega_R}\right)\right.\\
       &\quad&\qquad\left.+\frac{1}{\epsilon}\left\|\nabla\varphi^i\right\|^2_{L^1(J;H^{-1/2}(\Gamma))} +\frac{1}{\epsilon} \left\|\varphi_t^i\right\|^2_{L^1(J;H^{-1/2}(\Gamma))}\right).
      \end{eqnarray}
      \par{}Since the norm on the right-hand side in~\eqref{eq:stable6} contains $\mathbf{u}_t$ and $\varphi_t$, which cannot be bounded by the left-hand side, here a new system is needed.  Take the first partial derivative of \eqref{eq:rdfsi1}-\eqref{eq:rdfsi6} with respect to $t$, we have:
      \begin{align}
        \rho_2\mathbf{u}_{ttt}-\text{div}(\sigma(\mathbf{u}_t))&= \mathbf{0}, &&\text{in~} \Omega\times J,\label{eq:stable7.1}\\
        \frac{1}{c^2}\varphi_{ttt}-\Delta \varphi_t &= 0, &&\text{in~}\Omega_R \times J,\label{eq:stable7.2}\\
        \sigma(\mathbf{u}_t)\mathbf{n}&=-\rho_1(\varphi_{tt}+\varphi_{tt}^i)\mathbf{n}, &&\text{on~} \Gamma\times J,\label{eq:stable7.3}\\
        \frac{\partial\varphi_t}{\partial\mathbf{n}} &= -\frac{\partial\varphi^i_t}{\partial\mathbf{n}}-\mathbf{u}_{tt}\cdot\mathbf{n},&& \text{on~} \Gamma\times J,\label{eq:stable7.4}\\
      \frac{\partial \varphi_t}{\partial\mathbf{n}}&= -\varphi_{tt},&&\text{on~} \Gamma_R\times J,\label{eq:stable7.5}\\
      \mathbf{u}_t|_{t=0}=\left.\mathbf{u}_{tt}\right|_{t=0} & =\mathbf{0},\, x \in \Omega,\qquad\quad\text{and} && \varphi_t|_{t=0}=\left.\varphi_{tt}\right|_{t=0}=0,\quad x \in \Omega_R. \label{eq:stable7.6}
      \end{align}
      We consider the energy function:
      \[
            F(t) = F_1(t) + F_2(t)
      \]
      where
      \[
        F_1(t) = \left\|\frac{\sqrt{\rho_2}}{\sqrt{\rho_1}}\mathbf{u}_{tt}\right\|^2_{0,\Omega}+\left\|\frac{\sqrt{\lambda}}{\sqrt{\rho_1}}\nabla\cdot\mathbf{u}_t\right\|^2_{0,\Omega} + \left\|\frac{\sqrt{2\mu}}{\sqrt{\rho_1}}\varepsilon(\mathbf{u}_t)\right\|^2_{F(\Omega)},
      \]
      and
      \[
        F_2(t) = \left\|\frac{1}{c}\varphi_{tt}\right\|^2_{0,\Omega_R}+\left\|\nabla\varphi_t\right\|^2_{0,\Omega_R}.
      \]
      It is clear that $F(0)=0$.  Similarly, follows the same step to prove inequality~\eqref{eq:stable5}, we obtain that
      \begin{eqnarray}\label{eq:stable9}
       \nonumber
       F(t)&=&2\int_0^t\left(\langle-\varphi_{tt},\varphi_{tt}\rangle_{\Gamma_R}+\langle\nabla\varphi_t^i\mathbf{n},\varphi_{tt}\rangle_{\Gamma} -\langle\varphi_{tt}^i\mathbf{n},\mathbf{u}_{tt}\rangle_{\Gamma}\right)\,d\tau\\
        \nonumber
        &\leq&2\int_0^t\left(\langle\nabla\varphi_{t}^i\mathbf{n},\varphi_{tt}\rangle_{\Gamma} -\langle\varphi^i_{tt}\mathbf{n},\mathbf{u}_{tt}\rangle_{\Gamma}\right)\,d\tau\\
        \nonumber
        &=&2\left.\int_{\Gamma}\left(\nabla\varphi_{t}^i\mathbf{n}\,\varphi_{t} -\varphi^i_{tt}\mathbf{n}\,\mathbf{u}_{t}\right)\right|_{0}^{t}-2\int_0^t\left(\langle\nabla\varphi_{tt}^i\mathbf{n},\varphi_{t}\rangle_{\Gamma} -\langle\varphi^i_{ttt}\mathbf{n},\mathbf{u}_{t}\rangle_{\Gamma}\right)\,d\tau\\
        \nonumber
        &\leq&C\left(\max_{t\in[0,T]}\|\mathbf{u}_{t}\|_{1,\Omega}\left(\max_{t\in[0,T]}\left\|\varphi_{tt}^i\right\|_{-\frac{1}{2},\Gamma} +\left\|\varphi_{ttt}^i\right\|_{L^1(J;H^{-1/2}(\Gamma))}\right)\right.\\
        &&+\left.\max_{t\in[0,T]}\|\varphi_{t}\|_{1,\Omega_R}\left(\max_{t\in[0,T]}\left\|\nabla\varphi_{t}^i\right\|_{-\frac{1}{2},\Gamma} +\left\|\nabla\varphi_{tt}^i\right\|_{L^1(J;H^{-1/2}(\Gamma))}\right)\right).
      \end{eqnarray}
      Combining~\eqref{eq:stable6} and~\eqref{eq:stable9}, using Young's inequality, we have:
      \begin{eqnarray}\label{eq:stable10}
       \nonumber
        &~&\left(\|\mathbf{u}_{tt}\|^2_{0,\Omega}+\|\nabla\cdot\mathbf{u}_t\|^2_{0,\Omega} +\|\varepsilon(\mathbf{u}_t)\|^2_{F(\Omega)}\right)+\left(\|\nabla\varphi_t\|^2_{0,\Omega_R} +\|\varphi_{tt}\|^2_{0,\Omega_R}\right)\leq C( E(t)+F(t))\\
        \nonumber
        &\leq& C\left(2\epsilon\max_{t\in[0,T]}\left(\|\mathbf{u}_t\|^2_{0,\Omega}+\|\nabla\cdot\mathbf{u}_t\|^2_{0,\Omega} +\|\varepsilon(\mathbf{u}_t)\|^2_{F(\Omega)}\right)+2\epsilon\max_{t\in[0,T]}\left(\|\nabla\varphi_t\|^2_{0,\Omega_R}+\|\varphi_t\|^2_{0,\Omega_R}\right)\right. \\
        \nonumber
        &~&\qquad\left.
        +\frac{1}{\epsilon}\left\|\varphi^i_t\right\|^2_{L^1(J;H^{-1/2}(\Gamma))}+\frac{1}{\epsilon}\left\|\varphi^i\right\|^2_{L^1(J;H^{-1/2}(\Gamma))}
        +\frac{1}{\epsilon}\max_{t\in[0,T]}\left\|\varphi_{tt}^i\right\|^2_{-\frac{1}{2},\Gamma} \right.\\
        &~&\qquad\left. +\frac{1}{\epsilon}\left\|\varphi_{ttt}^i\right\|^2_{L^1(J;H^{-1/2}(\Gamma))}+\frac{1}{\epsilon}\max_{t\in[0,T]}\left\|\nabla\varphi_{t}^i\right\|^2_{-\frac{1}{2},\Gamma} +\frac{1}{\epsilon}\left\|\nabla\varphi_{tt}^i\right\|^2_{L^1(J;H^{-1/2}(\Gamma))}\right).
       \end{eqnarray}
       Hence we choose $\epsilon>0$ small enough such that $2C\epsilon<1/2$. It follows from~\eqref{eq:stable10} that
       \begin{eqnarray}\label{eq:stable11}
       \nonumber
        &~&\max_{t\in[0,T]}\left(\|\mathbf{u}_{tt}\|^2_{0,\Omega}+\|\nabla\cdot\mathbf{u}_t\|^2_{0,\Omega} +\|\varepsilon(\mathbf{u}_t)\|^2_{F(\Omega)}+\|\nabla\varphi_t\|^2_{0,\Omega_R} +\|\varphi_{tt}\|^2_{0,\Omega_R}\right)\\
        \nonumber
        &\leq& C\left(\left\|\varphi^i_t\right\|^2_{L^1(J;H^{-1/2}(\Gamma))}+\left\|\nabla\varphi^i\right\|^2_{L^1(J;H^{-1/2}(\Gamma))}+\max_{t\in[0,T]}\left\|\varphi_{tt}^i\right\|^2_{-\frac{1}{2},\Gamma}\right.\\
        &&\left.+\max_{t\in[0,T]}\left\|\nabla\varphi_{t}^i\right\|^2_{-\frac{1}{2},\Gamma}
        \nonumber
        +\left\|\varphi_{ttt}^i\right\|^2_{L^1(J;H^{-1/2}(\Gamma))}+\left\|\nabla\varphi_{tt}^i\right\|^2_{L^1(J;H^{-1/2}(\Gamma))}\right).
       \end{eqnarray}
       For any $0<t\leq T$, using the Young's inequality leads to
       \begin{equation*}
         \|\varphi_t\|_{0,\Omega_R}^2=\int_{0}^{t}\partial_{\tau}\|\partial_{\tau}\varphi\|^2_{0,\Omega_R}\,d\tau \leq\epsilon T\|\varphi_{t}\|^2_{0,\Omega_R}+\frac{T}{\epsilon}\|\varphi_{tt}\|^2_{0,\Omega_R}.
       \end{equation*}
       Here we choose $\epsilon$ small enough so that $\epsilon T<1$ (e.g. $\epsilon=\frac{1}{2T}$). Hence, we have:
       \begin{equation}\label{eq:stable12}
         \|\varphi_t\|_{0,\Omega_R}^2\leq4T^2\|\varphi_{tt}\|^2_{0,\Omega_R} \leq C \|\varphi_{tt}\|^2_{0,\Omega_R}
       \end{equation}
       Similarly, we obtain
       \begin{eqnarray}
        \nonumber
         \|\nabla\varphi\|^2_{0,\Omega_R} \leq  C\|\nabla\varphi_t\|_{0,\Omega_R}^2, &&\|\mathbf{u}_{t}\|^2_{0,\Omega}\leq C\|\mathbf{u}_{tt}\|^2_{0,\Omega},\\
         \|\varepsilon(\mathbf{u})\|^2_{F(\Omega)}\leq C \|\varepsilon(\mathbf{u}_t)\|^2_{F(\Omega)},&&\|\nabla\cdot\mathbf{u}\|^2_{0,\Omega}\leq C\|\nabla\cdot\mathbf{u}_{t}\|^2_{0,\Omega}.\label{eq:stable13}
       \end{eqnarray}
       Combining \eqref{eq:stable10}-\eqref{eq:stable13}, we have
       \begin{eqnarray}\label{eq:stable14}
       \nonumber
        &~&\max_{t\in[0,T]}\left(\|\mathbf{u}_{t}\|^2_{0,\Omega}+\|\nabla\cdot\mathbf{u}\|^2_{0,\Omega} +\|\varepsilon(\mathbf{u})\|^2_{F(\Omega)}+\|\nabla\varphi\|^2_{0,\Omega_R} +\|\varphi_{t}\|^2_{0,\Omega_R}\right)\\
        \nonumber
       &\leq& C\left(\left\|\varphi^i_t\right\|^2_{L^1(J;H^{-1/2}(\Gamma))}+\left\|\nabla\varphi^i\right\|^2_{L^1(J;H^{-1/2}(\Gamma))}+\max_{t\in[0,T]}\left\|\varphi_{tt}^i\right\|^2_{-\frac{1}{2},\Gamma} \right.\\
       &&\left. +\max_{t\in[0,T]}\left\|\nabla\varphi_{t}^i\right\|^2_{-\frac{1}{2},\Gamma}
        \nonumber
      +\|\varphi_{ttt}^i\|^2_{L^1(J;H^{-1/2}(\Gamma))}+\|\nabla\varphi_{tt}^i\|^2_{L^1(J;H^{-1/2}(\Gamma))}\right),
       \end{eqnarray}
       after applying the Cauchy-Schwarz inequality we have the estimate~\eqref{eq:stability1.1} and~\eqref{eq:stability1.2}.
       \qed

\bigskip
\small

\end{document}